\documentclass[11pt]{article}
\usepackage[a4paper,margin=25mm]{geometry}
\usepackage{lmodern}
\usepackage{xcolor,microtype,mathtools,amssymb,amsthm}
\usepackage{xurl}
\usepackage[colorlinks=true,linkcolor=blue!55!black,
  citecolor=blue!55!black,urlcolor=blue!55!black,
  bookmarksnumbered=true,bookmarksopen=true,bookmarksopenlevel=2,
  pdfpagemode=UseOutlines]{hyperref}

\numberwithin{equation}{section}
\newtheorem{theorem}{Theorem}[section]
\newtheorem{proposition}[theorem]{Proposition}
\newtheorem{lemma}[theorem]{Lemma}
\newtheorem{remark}[theorem]{Remark}

\newcommand{\Z}{\mathbb Z}
\newcommand{\C}{\mathbb C}
\newcommand{\R}{\mathbb R}
\newcommand{\T}{\mathbb T}
\newcommand{\N}{\mathbb N}
\newcommand{\e}{\mathrm e}
\newcommand{\es}[1]{\mathrm e(#1)}
\newcommand{\dd}{\,\mathrm d}
\newcommand{\dist}{\operatorname{dist}}
\newcommand{\Rea}{\operatorname{Re}}
\newcommand{\Ima}{\operatorname{Im}}
\newcommand{\norm}[1]{\lVert #1\rVert}
\newcommand{\abs}[1]{\lvert #1\rvert}

\title{\Large\bf Dimension-free estimates for the full discrete Euclidean ball maximal function}
\author{Sheng-Chen Mao}
\date{}

\begin{document}

\maketitle

\noindent\textbf{Abstract}. 
Let $M_t$ denote the normalized average over the lattice points in the
Euclidean ball of radius $t$ in $\mathbb{Z}^d$.  We prove that the full maximal
operator $f\mapsto\sup_{t\geq0}\lvert M_t f\rvert$ is bounded on
$\ell^p(\mathbb{Z}^d)$, for every $1<p\leq\infty$, with a constant independent
of the dimension.  In particular, this resolves a question of E.M. Stein
from the mid 1990s.  The principal ingredient in our proof is that, when
$t\lesssim d$ with $t$ sufficiently large, the associated multiplier
$\mathfrak{m}_{\sqrt{\lfloor t^2\rfloor}}(\xi)$ admits an asymptotic expansion
of arbitrary prescribed order, uniform in $\xi$, whose resulting maximal
operators can be controlled by the discrete normalized Gaussian maximal
function studied by Mirek--Szarek--Wr\'obel \cite{MSW25}.

\medskip
\noindent\textbf{MSC2020.} Primary 42B25; Secondary 42B15, 11P21.

\smallskip
\noindent\textbf{Keywords.} Discrete maximal function; Dimension-free estimate;
Euclidean ball; Theta function.

\section{Introduction}\label{sint}

Dimension-free maximal inequalities ask whether a family of averages admits
operator bounds that remain uniform as the ambient dimension increases.  In
a fixed dimension, covering arguments may introduce constants that depend on
the dimension; a uniform estimate must instead use the geometry and Fourier
structure of the averaging sets.  Euclidean balls provide the fundamental
positive example in the continuous setting.  Existing
small-radius and large-radius methods leave an intermediate interval.  We
prove the required estimate in this interval and thereby obtain a
dimension-free bound for every \(1<p\leq\infty\).

\subsection{Main results}\label{smain}

For \(d\in\N\), let
\begin{equation*}
 B:=B_2(d):=\{x\in\R^d:\abs{x}\leq1\},\qquad
 tB:=tB_2(d)\cap\Z^d\quad(t\geq0),
\end{equation*}
where \(\abs{x}\) denotes the  Euclidean $\ell^2$-norm.  Following Niksi\'nski and
Wr\'obel~\cite[Section 1.5]{NW26}, we use \(tB\) for the lattice set
\(tB_2(d)\cap\Z^d\), rather than for the corresponding subset of
\(\R^d\).  The normalized discrete ball average is
\begin{equation*}
 M_tf(x):=\frac{1}{\#(tB)}
 \sum_{y\in tB}f(x-y),
 \qquad f\in \ell^1(\mathbb{Z}^d),\quad x\in\Z^d.
\end{equation*}
Each \(M_t\) is a contraction on \(\ell^p(\Z^d)\), so a single radius causes
no loss depending on \(d\).  Classical weak type \((1,1)\) arguments and
interpolation bound the supremum in each fixed dimension, but the resulting
constants grow exponentially with \(d\); see Niksi\'nski and
Wr\'obel~\cite[Section 1.1]{NW26}.  Stein posed the problem of avoiding this
dimensional loss in the mid 1990s.  More precisely, one asks whether
\begin{equation}\label{e115}
 \sup_{d\in\N}\sup_{f\neq0}
 \frac{\norm{\sup_{t\geq0}\abs{M_tf}}_{\ell^2(\Z^d)}}
 {\norm{f}_{\ell^2(\Z^d)}}\leq C.
\end{equation}

We prove the following strong-type estimate, which gives an affirmative answer
to \eqref{e115} and covers every \(1<p\leq\infty\).

\begin{theorem}\label{mthm}
Let \(p\in(1,\infty]\). There is a constant \(C_p>0\)  independent of \(d\)
such that, for every \(d\in\N\) and every \(f\in\ell^p(\Z^d)\),
\begin{equation}\label{e004}
 \norm{\sup_{t\geq0}\, \abs{M_tf}}_{\ell^p(\Z^d)}
 \leq C_p\, \norm{f}_{\ell^p(\Z^d)}.
\end{equation}
\end{theorem}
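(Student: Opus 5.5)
The bound will come from splitting the radii into three ranges and controlling each range by a dimension-free estimate. Since $tB$ depends only on $\lfloor t^2\rfloor$, we have $M_t=M_{\sqrt{\lfloor t^2\rfloor}}$. So the supremum is really over radii $\sqrt n$ with $n\in\N$. We split into a small range $\sqrt n\le C_0$ (fixed constant), an intermediate range $C_0<\sqrt n\le c\,d$, and a large range $\sqrt n> c\,d$. The maximal function is bounded by the sum of the three restricted suprema, and $p=\infty$ is trivial. For the large range we invoke the known results: the dimension-free estimates of Bourgain--Mirek--Stein--Wr\'obel for large radii ($t\gtrsim d$, comparison with the continuous ball via $\ell^2$ multiplier estimates plus a transference to $p\in(1,\infty)$). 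For the small range we use the Niksi\'nski--Wr\'obel treatment of $t\lesssim$ const, or bound the finitely many scales directly; either route should give constants independent of $d$. The new work is the intermediate range.

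In the intermediate range we study the multiplier
\[
\mathfrak m_{\sqrt n}(\xi)=\frac{1}{\#(\sqrt nB)}\sum_{|x|^2\le n}\es{x\cdot\xi}.
\]
The plan is to write the indicator of $|x|^2\le n$ through a contour integral in a complex parameter $z=s+i\tau$:
\[
\sum_{|x|^2\le n}\es{x\cdot\xi}=\frac{1}{2\pi i}\oint \prod_{j=1}^d\theta(\xi_j,z)\,\frac{\e^{nz}}{1-\e^{-z}}\,\dd z,\qquad \theta(\eta,z)=\sum_{k\in\Z}\e^{-zk^2}\es{k\eta}.
\]
We then run a saddle point analysis in $z$; the saddle location $s_*\approx d/(2n)$ is large precisely when $\sqrt n\lesssim d$. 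In that regime each one-dimensional theta factor is dominated by $k\in\{0,\pm1\}$, that is, $\theta\approx 1+2\e^{-z}\cos(2\pi\eta)+\dots$. The product behaves like a Gaussian-type multiplier in the variables $\sin^2(\pi\xi_j)$. Expanding the logarithm of the integrand around the saddle, I expect an asymptotic expansion
\[
\mathfrak m_{\sqrt n}(\xi)=\sum_{k=0}^{K}n^{-k/2}\,P_k(\text{heat-type multipliers})+O(n^{-(K+1)/2}),
\]
uniformly in $\xi$ and $d$. Here the leading term is essentially the discrete normalized Gaussian (heat/Poisson-type) multiplier $\prod_j(1+2\e^{-s}\cos 2\pi\xi_j)/(1+2\e^{-s})$ at a parameter $s=s(n,d)$. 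The corrections are derivatives in $s$ of such multipliers, multiplied by bounded factors.

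The main terms are then dominated, pointwise for nonnegative $f$, by the discrete normalized Gaussian maximal function of Mirek--Szarek--Wr\'obel, which is bounded on $\ell^p$ uniformly in $d$ for all $p>1$. Derivative terms of the form $s^k\partial_s^k$ of a semigroup are handled by standard semigroup maximal estimates (Stein's Littlewood--Paley theory for symmetric diffusion semigroups). The remainder, once $K$ is large, is summable over dyadic scales of $n$ in $\ell^2$. This square-function argument gives the $\ell^2$ bound. To reach all $p>1$, we interpolate the $\ell^2$ decay of the differences with trivial $\ell^1\to\ell^{1,\infty}$-type or $\ell^\infty$ bounds on each single-scale piece; the Gaussian comparison also has to be done at the level of kernels so that positivity arguments work for $p$ near $1$. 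The main obstacle is the uniformity of the saddle point expansion in $\xi$ and $d$ simultaneously. Near $\xi$ where many $\cos 2\pi\xi_j$ are negative, the theta product can have competing saddle contributions and tails from $|k|\ge2$. I would first try to bound these contributions crudely by $\e^{-c\,\#\{j:\|\xi_j\|\ge 1/4\}}$-type decay and absorb them into the remainder.
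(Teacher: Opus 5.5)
There are two genuine gaps, both in your intermediate range.

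\textbf{The dense regime $d\lesssim n\lesssim d^2$ is not analyzed.} Write $\rho=\e^{-s}$. The saddle solves $n=d\,m(\rho)+\rho/(1-\rho)$, so $s_*$ is large only when $n\ll d$, where $\e^{-s_*}\approx n/(2d)$. Your claim that $s_*$ is large whenever $\sqrt n\lesssim d$ is wrong: for $n\gg d$ one has $s_*\approx d/(2n)\to0$. The large-radius theorem covers only $t\geq Cd$ for a fixed absolute $C$, so your middle range unavoidably contains every $n$ with $d\lesssim n\leq C^2d^2$. There about $\sqrt{n/d}$ terms of each theta series contribute, which has three consequences.
\begin{itemize}
\item The truncation to $k\in\{0,\pm1\}$ and your leading multiplier are wrong there. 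The correct main term is the full normalized Gaussian $\prod_j\vartheta(\rho,\xi_j)/h(\rho)$.
\item The local analysis needs the modular transformation $\vartheta(\e^{-\pi/S},u)=S^{1/2}H_S(u)$.
\item Most importantly, you need a minor-arc bound uniform in $\xi$: $\sup_u\abs{\vartheta(\rho\e^{i\theta},u)}/h(\rho)\leq1-\varepsilon_0$ whenever $\dist(\theta,\pi\Z)\gtrsim1-\rho$, so that the $d$-fold product is exponentially small relative to the major-arc mass. The paper proves this by a rigidity argument for quadratic phases (Lemma~\ref{qrig}, Proposition~\ref{invp}). Your proposal contains no minor-arc argument.
\end{itemize}
The expansion parameter is also $L^{-1/2}$ with $L=\min(n,d)$, not $n^{-1/2}$. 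For $n>d$ the Gaussian approximation is a high-dimensional effect, and the remainder is only $O(d^{-(K+1)/2})$. The summation still closes, since there are $O(d^2)$ such $n$. But this requires interpolating the $\ell^2$ bound with a uniform $\ell^1$ bound (interpolating with $\ell^\infty$ only gives $p\geq2$), and choosing $K$ with $(K+1)(p-1)\geq2$.

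\textbf{The ``competing saddle'' cannot be absorbed into the remainder.} Since $k^2\equiv k\pmod 2$, one has $\vartheta(-z,u)=\vartheta(z,u+1/2)$. So $\theta=\pi$ is a second saddle of the same height. It contributes $(-1)^n\omega_\rho$ times a Gaussian-type multiplier centred at $\xi=(1/2,\ldots,1/2)$, where $\omega_\rho=(1-\rho)/(1+\rho)$.
\begin{itemize}
\item For $n\ll d$ it is of order one: $\mathfrak m_{\sqrt n}(1/2,\ldots,1/2)=\#(\sqrt nB)^{-1}\sum_{\abs{x}^2\leq n}(-1)^{\abs{x}^2}\approx(-1)^n$, whereas your main term is about $\e^{-2n}$ there. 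No bound of the form $\e^{-c\#\{j:\norm{\xi_j}_{\T}\geq1/4\}}$ can hold.
\item For $n>d$ one has $\omega_\rho\approx d/n$, and $\sum_{d<n\leq C^2d^2}(d/n)^2\approx d$. So this term is not even square-summable uniformly in $d$, and is needed already for $p=2$.
\end{itemize}
The paper keeps it as the main term $(-1)^n\omega_\rho\mathbb A^{(j)}_{1,\xi}(0)$. Its kernels are $(-1)^{x_1+\cdots+x_d}$ times Gaussian-dominated kernels.

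\textbf{A smaller issue with the derivative terms.} The normalized discrete Gaussians $G_s$ do not form a semigroup, so Stein's diffusion-semigroup theory does not handle them. What works is that the Taylor kernels are polynomials in $(\abs{x}^2-dm(\rho))/\sqrt V$ times $\rho^{\abs{x}^2}/h(\rho)^d$. These are pointwise dominated by the two normalized Gaussians at $\rho\e^{\pm\eta/\sqrt V}$ (Lemma~\ref{tilt}). That single domination gives both the needed $\ell^1$ bound and the maximal bound via Mirek--Szarek--Wr\'obel.
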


\begin{remark}
While this manuscript was being finalized, two closely related preprints
appeared.  Jin and Su~\cite[Theorem 1.1]{JS26} and Hormozi,
Niksi\'nski, and Wr\'obel~\cite[Theorem 1]{HNW26} prove the same
full-radius dimension-free estimate for discrete Euclidean balls as
Theorem~\ref{mthm}.  The paper \cite[Theorem 2]{HNW26} also proves a dimension-free estimate for the
full discrete spherical maximal operator when $d\geq5$ and
$2\leq p\leq\infty$.  We will compare the three approaches in
Subsection~\ref{smeth}.
\end{remark}

It is enough to prove Theorem~\ref{mthm} for \(1<p\leq2\).  Once the
\(\ell^2\) estimate is known, interpolation with the contractive
\(\ell^\infty\) bound gives the exponents \(2<p<\infty\).  The theorem concerns
all radii, rather than a dyadic or otherwise restricted family.   We do not consider the full
discrete spherical maximal function here.

The history begins with the continuous Hardy--Littlewood maximal problem.
Classical covering arguments give boundedness in every fixed dimension, but
their constants deteriorate as \(d\) grows.  Stein~\cite{Ste82}, followed by
Stein and Str\"omberg~\cite{SS83}, proved that Euclidean ball averages are
bounded on \(L^p(\R^d)\), for every \(p>1\), with a constant depending only on
\(p\).  The method of rotations uses the rotational symmetry of the Euclidean
ball. Bourgain~\cite{Bou86}
then obtained the dimension-free \(L^2\) estimate for every symmetric convex
body.  Bourgain~\cite{BouLp} and, independently,
Carbery~\cite{Car86} extended this conclusion to \(p>3/2\).  For the special
family of \(\ell^q\) balls, M\"uller~\cite{Mul90} reached the full range
\(p>1\).  These results connect dimension-free maximal estimates with convex
geometry and Fourier decay.  They also show that covering arguments alone do
not account for dimension-free bounds.

The passage from \(\R^d\) to \(\Z^d\) cannot be obtained by direct
transference.  Kosz,
Mirek, Plewa, and Wr\'obel~\cite[Theorem 1]{KMP23} proved that the optimal
continuous maximal constant is no larger than its discrete counterpart.  In
this precise sense, the lattice problem contains the continuous one.  In
addition, Bourgain, Mirek, Stein, and Wr\'obel~\cite[Theorem 2 and its proof
on p.~871]{BMS19} constructed symmetric ellipsoids for which the discrete
maximal norm is bounded below by a constant times
\((\log d)^{1/p}\).  The loss already occurs among radii smaller than
\(\sqrt2\).  Thus the general continuous theory for symmetric convex bodies
has no discrete analogue, and an anisotropic deformation may
destroy dimension-free control.  The equal coefficients in the Euclidean
quadratic form and the arithmetic of sums of squares therefore cannot be
treated as perturbative features of Stein's question.
For comparison, the same authors~\cite[Sections 1 and 3]{BMS19} proved full
dimension-free estimates for discrete cubes when \(p>3/2\), as well as their
dyadic counterpart for every \(p>1\).  The different behavior of cubes,
ellipsoids, and Euclidean balls shows that the discrete problem depends on the
geometry of the averaging sets.

The first dimension-free results treated separate ranges of the radius
parameter.  Bourgain, Mirek, Stein, and
Wr\'obel~\cite[Theorem 1.1]{BMS20} proved dimension-free estimates on
\(\ell^p(\Z^d)\), \(2\leq p\leq\infty\), along dyadic radii.  The same
authors~\cite[Theorem 2]{BMS21} treated every radius \(t\geq Cd\), where a
comparison with continuous averages becomes effective.  Kosz, Mirek, Plewa,
and Wr\'obel~\cite[Sections 1 and 2]{KMP23} developed comparison principles
for further convex bodies and identified boundary errors that obstruct their
use at smaller scales.  For spheres, Mirek, Szarek, and
Wr\'obel~\cite[Theorem 1.1]{MSW24} obtained the corresponding dyadic estimate
for \(2\leq p\leq\infty\) in dimensions at least five.  Dyadic control avoids
the accumulation of changes at consecutive squared radii, but it does not by
itself yield a full maximal theorem.

Two recent results are directly relevant to \eqref{e115}.  Mirek, Szarek, and
Wr\'obel~\cite[Theorem 1.1 and (1.12)]{MSW25}
proved dimension-free maximal, jump, variation, and oscillation estimates for
normalized discrete Gaussians throughout \(1<p<\infty\).  For nonnegative
functions, their Gaussian maximal function is pointwise dominated by the
Euclidean ball maximal function.  Consequently, their Gaussian estimate
provides the appropriate model bound, but the direction of the pointwise
domination does not imply the ball estimate.  Niksi\'nski and
Wr\'obel~\cite[Theorems 1.1 and
1.2]{NW26} subsequently established estimates over every radius on
\(\ell^p\), \(2\leq p\leq\infty\), for both balls and spheres in the range
\(0\leq t\leq d^{1/2-\varepsilon}\), for every \(\varepsilon>0\).  Their
proof combines uniform lattice-point estimates, a reduction to coordinate
layers, Fourier analysis, and a Rademacher--Menshov argument.  It was the
first dimension-free treatment of all small radii for these sets, rather than
only a lacunary subfamily.

Niksi\'nski~\cite[Theorems 1.2--1.5 and Proposition 7.1]{N26} has since
extended several of these ideas to convex bodies invariant under coordinate
permutations and sign changes.  By introducing a discrete isotropic parameter,
he obtained concentration estimates for lattice points, dimension-free bounds
over dyadic radii, and full estimates in an appropriate small-scale regime.
His frequency analysis also exhibits the neighborhoods of \(0\) and
\((1/2,\ldots,1/2)\) that are relevant in the discrete setting.  Nevertheless,
in the August 2026 version, Niksi\'nski~\cite[Question 1.8]{N26} still stated
the full Euclidean problem as open: these results do not control all
consecutive squared radii in the intermediate range.  Three contemporaneous
manuscripts subsequently obtained the full ball estimate: Jin and
Su~\cite[Theorem 1.1]{JS26}, Hormozi, Niksi\'nski, and
Wr\'obel~\cite[Theorem 1]{HNW26}, and the present paper.  The latter authors
also treat the full spherical maximal operator for $d\geq5$ and
$2\leq p\leq\infty$; see Hormozi, Niksi\'nski, and
Wr\'obel~\cite[Theorem 2]{HNW26}.  Their spherical result is dimension-free
in this range, whereas the optimal fixed-dimensional boundedness range remains
$p>d/(d-2)$.

Lattice-point asymptotics form a closely related part of the subject.
Mazo and Odlyzko~\cite[Theorems 1 and 2]{MO90} studied high-dimensional
spheres, while Niksi\'nski and Wr\'obel~\cite[Theorems 1.3 and 1.4]{NW26}
derived estimates uniform in the small-radius parameters needed for their
maximal theorem.  Dymowski~\cite[Theorem 1.1]{D26} extended such estimates to
high-dimensional \(\ell^q\) balls and spheres for integral \(q\geq2\).  In
the Euclidean case his range is \(n\leq cd\), and hence it does not enter the
middle regime considered below.   Those results concern averaging over the center, not a supremum over
radii.  They provide another application of Gaussian and theta function
methods to high-dimensional counting, while the maximal problem requires
uniform operator estimates after normalization at every radius.

Before the contemporaneous works of Jin and Su~\cite{JS26}, Hormozi,
Niksi\'nski, and Wr\'obel~\cite{HNW26}, and the present paper, combining the
small-radius theorem of Niksi\'nski and Wr\'obel~\cite[Remark 1]{NW26} with
the large-radius comparison left, up to fixed constants, the interval
\[
 d^{1/2-\varepsilon}<t<Cd.
\]
Coordinate sparsity is no longer available there, the relative boundary error
in the continuous comparison remains too large, and the averaging set may
change at every integer value of \(t^2\).  Both proofs control the full
sequence of squared radii in this interval and connect the small-radius theory
with the large-radius comparison theorem.  In the present argument, exact
theta coefficients are combined with a single saddle analysis based on
\(L=\min(n,d)\).  Parity produces a second major arc, whose contribution is
retained as part of the principal expansion.

\subsection{Methods and outline}\label{smeth}

Earlier arguments exploit models adapted to a particular scale.  Bourgain,
Mirek, Stein, and Wr\'obel~\cite[Sections 3--5]{BMS20} used dimension-free
Fourier multiplier estimates along dyadic radii, whereas their large-radius
argument~\cite[Section 5]{BMS21} compares lattice averages with continuous
ones after the radius reaches a constant multiple of \(d\).  Niksi\'nski and
Wr\'obel~\cite[Sections 2--5]{NW26} reduced the small-radius problem to
coordinate layers and supported that reduction with uniform lattice-point
estimates.  Niksi\'nski~\cite[Sections 6--8]{N26} combined concentration,
dimension reduction, diffusion semigroups, and analysis near the two relevant
frequency points for general \(1\)-symmetric bodies.  None of these
scale-specific reductions remains uniform throughout the intermediate
interval.

The normalized Gaussian theorem supplies the model family used in our final
operator estimate.  Although these Gaussian averages do not form a semigroup,
Mirek, Szarek, and Wr\'obel~\cite[Sections 3--7]{MSW25} controlled them through
two theta function representations, fractional derivatives, complex
interpolation, and comparison with the discrete heat semigroup.  We use their
maximal theorem as an input.  Our argument compares each exact ball
multiplier, at every consecutive squared radius, with finitely many Gaussian
terms and produces a remainder that is summable uniformly in the dimension.

Jin and Su~\cite[Sections 3--7]{JS26} also begin with the exact theta
coefficient for the ball and retain the contributions of the arcs centered
at \(0\) and \(\pi\).  They choose the saddle of the theta power
\(h(z)^d\), treat the factor \((1-z)^{-1}\) as an amplitude, and divide the
finite radii into small, critical, and growing regimes.  In the growing
regime, they separate the remote arcs by Abel summation, a quadratic Weyl sum
estimate, and quadratic Gauss sums.  Their higher order terms are radial
derivatives of normalized Gaussian multipliers; finite differences at nearby
parameters replace those derivatives by genuine Gaussian multipliers.

Our treatment differs from that of Jin and Su in the following respects.  We choose the stationary point of the
complete ball generating function \(h(z)^d/(1-z)\).  This choice cancels the
linear term in the full logarithmic phase and leads to one set of scale
parameters throughout \(1\leq n\leq Ad^2\).  An elementary rigidity argument
for quadratic phases identifies the two possible major arcs and gives a
uniform gap on their complement; in particular, our proof does not use Weyl
sum estimates or Gauss sums.  On the two arcs, we keep the normalized moments
of the phase integral exact and apply Taylor's formula only to the Fourier
amplitude.  Derivatives of the phase beyond order four are therefore not
needed for the coefficient normalization.  Finally, a small shift of the Gaussian parameter allows us to dominate
each polynomially weighted Gaussian kernel by two normalized discrete
Gaussian kernels with nearby parameters.  These three choices provide a single
multiplier expansion of arbitrary prescribed order, with a remainder that
becomes summable after interpolation.

The ball argument of Hormozi, Niksi\'nski, and Wr\'obel
\cite[Proposition 4 and Sections 2.3--2.7]{HNW26} is closer to the present
one.  After identifying their parameter $r$ with our $\rho$, their
saddle-point equation is precisely \eqref{e008}.  Both arguments use the
complete generating function $h(z)^d/(1-z)$, retain the two arcs centered at
$0$ and $\pi$, control complex theta quotients uniformly in frequency, and
expand the corresponding multipliers to arbitrary order.  Hormozi,
Niksi\'nski, and Wr\'obel replace derivatives in the Gaussian parameter by
high order finite differences, thereby obtaining finite linear combinations
of normalized discrete Gaussian multipliers and their half-lattice
translates, with a remainder of arbitrary polynomial order.  Their method
also applies to discrete spheres.  In the present proof, Proposition~\ref{invp}
provides a deterministic quadratic-phase argument for the global two-arc
localization, while Lemma~\ref{jker} controls the local Taylor kernels by
pointwise comparison with normalized Gaussians at nearby parameters.  These
are differences in implementation; the complete saddle-point and Gaussian
comparison framework is shared by the two papers.

We now briefly outline our approach. First, we represent each ball multiplier by a coefficient in one complex
variable.
Since \(tB\) changes only when \(t^2\)
crosses an integer, we write
\begin{equation*}
 \sqrt nB:=\{x\in\Z^d:\abs{x}^2\leq n\},\qquad
 n\in\N_0:=\{0,1,2,\ldots\},
\end{equation*}
and let  
\begin{equation*}
  \mathfrak m_{\sqrt{n}}(\xi):=  \frac{1}{\#(\sqrt{n}B)}
     \sum_{y\in \sqrt{n}B} \exp(2\pi i y\cdot \xi),\quad \xi\in \mathbb{T}^d,
\end{equation*}
be the Fourier multiplier of \(M_{\sqrt n}\).  We use the Jacobi theta
functions
\begin{equation}\label{e006}
\vartheta(z,u):=\sum_{k\in\Z}z^{k^2}\exp{(2\pi i ku)}, \qquad h(z):= \vartheta(z,0) = \sum_{k\in\Z}z^{k^2},
\end{equation}
where \(z\in\mathbb C\), \(\abs z<1\), and \(u\in\mathbb T\).

The coefficient in the numerator is the Fourier-weighted lattice-point count,
whereas the denominator equals \(\#(\sqrt nB)\):
\begin{equation}\label{e007}
 \mathfrak m_{\sqrt n}(\xi)=
 \frac{[z^n]\left((1-z)^{-1}\prod_{j=1}^d\vartheta(z,\xi_j)\right)}
 {[z^n]\left((1-z)^{-1}h(z)^d\right)}.
\end{equation}
See Section  \ref{spre} for the proof. Given a differentiable function of one complex variable, we write
\((Df)(z):=zf'(z)\) for the Euler differential operator.  In particular,
\((Df)(\rho)=\rho f'(\rho)\) on the real interval \((0,1)\).  For
\(n\geq1\), we define \(\rho:=\rho_{n,d}\in(0,1)\) as the unique solution of
the saddle-point equation
\begin{equation}\label{e008}
 n=dD\log h(\rho)+\frac{\rho}{1-\rho}.
\end{equation}
The saddle equation \eqref{e008} differs from the equation
\(n=dD\log h(\rho)\) used by Niksi\'nski and
Wr\'obel~\cite[Theorem 3.4 and Lemma 3.5]{NW26} for the spherical generating
function \(h(z)^d\).  Jin and Su~\cite[Subsection 3.2]{JS26} use the latter
equation for the theta power in the ball coefficient and keep
\((1-z)^{-1}\) in the amplitude.  Both choices are valid.  We instead take
the logarithmic derivative of the complete ball generating function
\(h(z)^d/(1-z)\), which gives \eqref{e008}.
Consequently, the derivative of the full phase vanishes at the origin.  This
exact cancellation is useful when \(\rho\) approaches \(1\), because the
contribution \(\rho/(1-\rho)\) is incorporated into the saddle rather than
estimated separately.

Second, we prove a multiplier expansion of any prescribed fixed order,
uniformly on the frequency torus.  For each fixed \(A>1\), we put
\(L:=\min(n,d)\) and consider \(n\leq Ad^2\).  Cauchy's formula converts
\eqref{e007} into an angular integral.  Parity produces two candidate saddle
points, at \(0\) and \(\pi\), and the saddle equation \eqref{e008} cancels the
linear term at the origin.  The deterministic rigidity statement in
Proposition~\ref{invp} shows that a quadratic theta phase can be almost
constant only near one of these two points.  It follows that the remaining
part of the contour has an exponential gap after the \(d\)-fold product is
taken.  

Third, we identify the inverse Fourier kernels of the Taylor terms.
They are polynomials in the centered statistic
\(\abs{x}^2-dm(\rho)\), multiplied by a normalized discrete Gaussian.  By slightly shifting the Gaussian parameter, we dominate each of these
kernels pointwise by the sum of two normalized discrete Gaussian kernels.  The theorem of Mirek, Szarek, and
Wr\'obel~\cite[Theorem 1.1, in particular (1.5)]{MSW25} then controls the
finite Taylor sum.  The remainder is uniformly bounded on \(\ell^1\) and has
\(\ell^2\) norm
\(O(L^{-(N+1)/2})\).  Using the interpolation then completes the proof for every \(1<p<2\) once \((N+1)(p-1)\geq2\).  The
large-radius theorem of Bourgain, Mirek, Stein, and Wr\'obel gives the estimate
for the remaining radii.

This paper is organized as follows. In Section~\ref{spre} we establish the coefficient representation and uniform
estimates for the saddle parameters, localize the coefficient integral to
major arcs centered at \(0\) and \(\pi\), and prove the weighted coefficient
expansion.  In Section~\ref{sprf} we determine the contribution of the
\(\pi\)-centered major arc, prove the multiplier expansion of arbitrary
fixed order, estimate the Taylor kernels by nearby Gaussians, and combine
these estimates with the Gaussian and large-radius maximal theorems.

\subsection{Notation}\label{snot}

We will use similar notation  as in \cite{NW26}. Put
\(\N:=\{1,2,\ldots\}\) and \(\N_0:=\N\cup\{0\}\). The letter \(d\in\N\)
always stands for the dimension, while \(n\in\N_0\) denotes a squared
radius.   For a finite set \(A\),
we denote its cardinality by \(\#A\).

 We abbreviate
\(\es{s}:=\exp(-2\pi i s)\), identify \(\T^d:=\R^d/\Z^d\) with the unit cube
\([-1/2,1/2)^d\), and equip \(\T^d\) with the normalized Haar measure.
For \(f\in\ell^1(\Z^d)\), its Fourier series is
\begin{equation}\label{e012}
 \widehat f(\xi):=\sum_{x\in\Z^d}f(x)\,\es{x\cdot\xi},  \qquad \xi\in\T^d.
\end{equation}
If \(m\in L^1(\T^d)\), we use the inverse transform
\begin{equation}\label{e163}
 \mathcal F^{-1}m(x):=\int_{\T^d}m(\xi)\es{-x\cdot\xi}\dd\xi,
 \qquad x\in\Z^d.
\end{equation}
For \(f,g\in\ell^1(\Z^d)\), discrete convolution is defined by
\begin{equation}\label{e164}
 (f*g)(x):=\sum_{y\in\Z^d}f(x-y)g(y), \qquad x\in\mathbb{Z}^d.
\end{equation}
With the conventions \eqref{e012}, \eqref{e163}, and \eqref{e164},
\(\widehat{f*g}=\widehat f\,\widehat g\), and a
multiplier \(m\) acts by convolution with \(\mathcal F^{-1}m\).
For \(\xi\in\T^d\), the expression \(\xi+1/2\) means
\((\xi_1+1/2,\ldots,\xi_d+1/2)\) modulo \(\Z^d\).  Distances on
\(\T\) are taken modulo \(\Z\); in particular,
\(\norm{v}_{\T}:=\dist(v,\Z)\) for $v\in\R$.  The symbol \([z^n](H) \) denotes the
coefficient of \(z^n\) in a power series \(H\); for instance, if $H(z)=\sum c_n z^n$, then $[z^n](H)=c_n$. 

For nonnegative quantities \(X\) and \(Y\), we write
\(X\lesssim_A Y\) if \(X\leq C_AY\), and \(X\approx_A Y\) if both
\(X\lesssim_A Y\) and \(Y\lesssim_A X\); the subscript is omitted when
the constants are universal.  The same convention applies to big-O
notation, and all implicit constants are independent of \(d\).  Auxiliary
absolute parameters are fixed in the order stated in the proofs.  Finally,
\(\norm{f}_p\) abbreviates \(\norm{f}_{\ell^p(\Z^d)}\) whenever no
confusion can arise.  

\section{Preliminary Lemmas}\label{spre}

In this section we establish the coefficient estimates used in the multiplier
expansion.  In Lemma~\ref{spar} we obtain a common scale for the quadratic saddle
term and every fixed logarithmic derivative.  In Lemma~\ref{trat} and
Proposition~\ref{twow}
we localize the coefficient integral to the major arcs centered at \(0\) and
\(\pi\).  In Proposition~\ref{hwt} we then prove a finite Taylor expansion of any
prescribed order, and in Lemma~\ref{jker} we identify the corresponding
convolution kernels.  This order of presentation isolates the two analytic
tasks---localizing the coefficient integral and controlling the resulting
kernels---before they are assembled in Section~\ref{sprf}.

We first justify the coefficient formula used throughout the paper.  For
\(\abs z<1\), absolute convergence permits us to expand and rearrange the
series:
\[
 \begin{aligned}
 \frac{1}{1-z}\prod_{k=1}^d\vartheta(z,\xi_k)
 =\sum_{j\geq0}\sum_{x\in\Z^d}
 z^{j+\abs{x}^2}\es{x\cdot\xi}
 =\sum_{n\geq0}\left(
 \sum_{\substack{x\in\Z^d\\\abs{x}^2\leq n}}
 \es{x\cdot\xi}\right)z^n.
 \end{aligned}
\]
Thus the coefficient of \(z^n\) is the Fourier transform of the indicator
of \(\sqrt nB\).  Letting \(\xi=0\) gives \(\#(\sqrt nB)\).  Their ratio is
precisely the multiplier of \(M_{\sqrt n}\), proving \eqref{e007} directly.

Recall that \((Df)(\rho)=\rho f'(\rho)\).  We put
\begin{equation}\label{e016}
 m(\rho):=D\log h(\rho)=\frac1{h(\rho)}\sum_{k\in\Z} k^2 \rho^{k^2},\qquad
 a(\rho):=\frac{\rho}{1-\rho},\qquad
 \tau_\rho:=\frac{1}{1-\rho}.
\end{equation}
Through a direct differentiation we obtain
\begin{equation}\label{e131}
 Dm(\rho)=D^2\log h(\rho)=\frac1{2 h(\rho)^2} \sum_{j,k\in\Z}(j^2-k^2)^2
  \rho^{j^2+k^2}, \qquad
 Da(\rho)=\frac{\rho}{(1-\rho)^2}.
\end{equation}
So both the above two quantities are strictly positive. As a result, we see that the function 
\(\rho \mapsto d\,m(\rho)+a(\rho)\) increases strictly on $(0,1)$   with range $(0,\infty)$, and
\eqref{e008} has a unique solution for every \(n\geq1\).

We will use a division between two parameter ranges repeatedly.   For a large absolute constant \(s_*\), we put
\(\rho_*:=\exp(-\pi/s_*)\), and call \(\rho\leq\rho_*\) the compact
range, \(\rho>\rho_*\) the dense
range, respectively.  The value of \(s_*\) (and then $\rho_*$) may be increased relying on the specific context, which
changes only the absolute constants.

Recall Jacobi's triple product formula
(see e.g. \cite[(20.5.9)]{RW10})
\begin{equation}\label{e019}
 \vartheta(z,u)=\prod_{j\geq1}(1-z^{2j})
 (1+z^{2j-1}\e^{2\pi iu})(1+z^{2j-1}\e^{-2\pi iu}),
 \qquad \abs z<1,\ u\in\R.
\end{equation}
In particular, one has
\begin{equation}\label{e020}
 h(\rho)=\prod_{j\geq1}(1-\rho^{2j})
 (1+\rho^{2j-1})^2, \qquad 0<\rho<1.
\end{equation}
Given $S\in\{z\in\C: \Rea z>0 \}$, we define
\[
 H_S(u):=\sum_{j\in\Z}\e^{-\pi S(j-u)^2}, \qquad u\in\R.
\]
The following Landen's transformations (see e.g. \cite[(20.7.32)]{RW10}) will be useful:
\begin{equation}\label{e014}
 \vartheta(\e^{-\pi/S},u)=S^{1/2}H_S(u),
 \qquad \Rea S>0,\  u\in\R,
\end{equation}
where \(S^{1/2}\) denotes the principal square root.

\begin{lemma}\label{spar}
For \(0<\rho<1\) and every fixed integer \(r\geq2\),
\begin{equation}\label{e025}
 m(\rho)\approx a(\rho),\qquad
 D^2\log h(\rho)\approx   \rho\,\tau_\rho^2,
 \qquad
 \abs{D^r\log h(\rho)}\lesssim_r
   \rho\,\tau_\rho^r.
\end{equation}
If \(n,d\in\N\), \(\rho:=\rho_{n,d}\), and \(L:=\min(n,d)\ge1\).  Then we have
\begin{equation}\label{e017}
 n\approx d\,a(\rho), \qquad
 L\approx d\rho, \qquad
 dD^2\log h(\rho)\approx L\tau_\rho^2,
\end{equation}
and
\begin{equation}\label{e018}
 V :=dD^2\log h(\rho)+\frac{\rho}{(1-\rho)^2} \approx L\tau_\rho^2.
\end{equation}
The implicit constants in \eqref{e025} are uniform in \(\rho\), and all constants
in  \eqref{e017}-\eqref{e018} are uniform in \(n\) and \(d\).
\end{lemma}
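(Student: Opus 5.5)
We first prove \eqref{e025}, splitting into the compact range \(\rho\le\rho_*\) and the dense range \(\rho>\rho_*\).

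\emph{Compact range.} Here \(h(\rho)=1+2\rho+O(\rho^4)\) with \(h\approx1\). Hence
\[
m(\rho)=\frac{2\rho+8\rho^4+\cdots}{h(\rho)}\approx\rho\approx a(\rho).
\]
Similarly \(D^r\log h(\rho)\) is analytic in \(\rho\) on \(|\rho|\le\rho_*'\) (slightly larger than \(\rho_*\), with \(h\) zero-free there) and vanishes at \(0\). So \(|D^r\log h(\rho)|\lesssim_r\rho\), and \(\tau_\rho\approx1\) here. For \(r=2\), formula \eqref{e131} gives the lower bound directly, since the terms \((j,k)=(\pm1,0),(0,\pm1)\) contribute \(\gtrsim\rho\).

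\emph{Dense range.} Write \(\rho=e^{-\pi/S}\) with \(S\ge s_*\) large. Then \(1-\rho\approx 1/S\), so \(\tau_\rho\approx S\). By \eqref{e014} with \(u=0\),
\[
\log h(\rho)=\tfrac12\log S+\log H_S(0),
\]
where \(\log H_S(0)=O(e^{-\pi S})\) together with all its \(S\)-derivatives (the Poisson dual sum is exponentially close to \(1\)). Since \(D=\rho\,d/d\rho\) and \(S=-\pi/\log\rho\), we have \(D=(S^2/\pi)\,d/dS\). This gives
\[
m(\rho)=\frac{S}{2\pi}+O(e^{-cS})\approx\tau_\rho\approx a(\rho),\qquad D^2\log h=\frac{S^2}{2\pi^2}+O(e^{-cS}),
\]
and by induction \(D^r\log h=c_rS^r+O_r(e^{-cS})\). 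Since \(\rho\approx1\) here, this proves \eqref{e025}. The step needing most care is the uniformity of the error terms under repeated application of \(D\). I would handle it by writing \(D^r\) as a polynomial in \(S\) times \(d^k/dS^k\) and bounding \(\frac{d^k}{dS^k}\log H_S(0)\) by Cauchy estimates on a disc of radius \(S/2\) in the right half-plane, where \(|H_S(0)-1|\lesssim e^{-cS}\).

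\emph{Proof of \eqref{e017}.} By \eqref{e008} and \(m\approx a\), we get \(n=dm+a\approx(d+1)a\approx d\,a\), which is the first claim. For the second, note \(a\approx\rho\tau_\rho\), so \(n\approx d\rho\tau_\rho\). In the compact range \(\tau_\rho\approx1\), so \(n\approx d\rho\lesssim d\), hence \(L\approx n\approx d\rho\). In the dense range \(\rho\approx1\) and \(n\approx d\tau_\rho\gtrsim d\), hence \(L\approx d\approx d\rho\). In both cases \(L\approx d\rho\). Combining this with \(D^2\log h\approx\rho\tau_\rho^2\) gives \(dD^2\log h\approx d\rho\tau_\rho^2\approx L\tau_\rho^2\).

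\emph{Proof of \eqref{e018}.} The added term is \(\rho\tau_\rho^2\le d\rho\tau_\rho^2\), since \(d\ge1\). It is nonnegative and at most comparable to the main term, so \(V\approx L\tau_\rho^2\).

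The main obstacle is the dense-range derivative bound in \eqref{e025}: one must control every fixed order \(r\) uniformly as \(\rho\to1\) via the Landen transformation. Everything else is bookkeeping with \(a\approx\rho\tau_\rho\).
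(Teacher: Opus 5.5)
Your proof is correct and follows essentially the same route as the paper. Both arguments use the compact/dense split: a power-series bound for $D^r\log h$ that vanishes at $\rho=0$ in the compact range, and the Landen identity $\log h(\e^{-\pi/s})=\tfrac12\log s+\log h(\e^{-\pi s})$ with $D=\frac{s^2}{\pi}\frac{\dd}{\dd s}$ in the dense range. Both then derive \eqref{e017}--\eqref{e018} from $a(\rho)=\rho\tau_\rho$ and $L\approx d\min(a(\rho),1)\approx d\rho$. The differences are only in how routine uniformity is justified: you invoke zero-freeness of $h$ and Cauchy estimates, where the paper uses absolute convergence of the logarithmic series and termwise differentiation.
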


\begin{proof}
Consider initially that \(0<\rho\leq\rho_*\). Via  \eqref{e016} and \eqref{e131}, we immediately get the first two estimates in  \eqref{e025}.  Note that the logarithmic
series obtained from the Jacobi product formula \eqref{e020} converges absolutely and uniformly after any
prescribed times of applications of \(D\). To obtain the bound for higher derivatives in \eqref{e025}, we write
\(\log h(\rho)=\sum_{j\geq1}b_j\rho^j\).  Absolute convergence   implies
\(\sum_{j\geq1}j^r\abs{b_j}\rho_*^j<\infty\) for each fixed
\(r\).
Consequently,
\[
 \abs{D^r\log h(\rho)}
 \leq \rho\sum_{j\geq1}j^r\abs{b_j}\rho_*^{j-1}
 \lesssim_r\rho.
\]
The implicit constants of course depend only on the fixed threshold $\rho_*$.

For \(\rho_*<\rho<1\), we put \(\rho=\e^{-\pi/s}\), so that \begin{equation} \label{sran} 
    s>s_*:=\frac{\pi}{-\log \rho_*}.
\end{equation}
An application of  \eqref{e014} yields
\begin{equation}\label{e023}
 h(\e^{-\pi/s})=s^{1/2}h(\e^{-\pi s}).
\end{equation}
Since
\[
 \log h(\e^{-\pi/s})=\tfrac12\log s+\log h(\e^{-\pi s}),
 \qquad D=\frac{s^2}{\pi}\frac{\dd}{\dd s},
\]
and the defining series of $h(\e^{-\pi s}) $ and all its derivatives converge absolutely and
uniformly for \(s\geq s_*\).  It follows from  repeated differentiations with some routine estimates that, for any
\(r\geq0\),
\[
 D^r h(\e^{-\pi s})
 =
 O_r\bigl(s^{2r}\e^{-\pi s}\bigr).
\]
and hence 
\begin{equation*}
 D^r\log h(\rho)
 =\frac{(r-1)!}{2\pi^r}s^r+O_r(s^{2r}\e^{-\pi s}).
\end{equation*}
After increasing \(s_*\), this yields
\begin{equation*}
 m(\rho)=\frac{s}{2\pi}+O(s^2\e^{-\pi s}),\qquad
 D^2\log h(\rho)\approx s^2,\qquad
 \abs{D^r\log h(\rho)}\lesssim_rs^r.
\end{equation*}
On the other hand,
\(a(\rho)=(\e^{\pi/s}-1)^{-1}\approx s\), while
\(\tau_\rho=(1-\e^{-\pi/s})^{-1}\approx s\).  Then \eqref{e025}
also holds in the dense range.

The saddle-point equation \(n=d\,m(\rho)+a(\rho)\) in \eqref{e008} and
\eqref{e025} yield
\(n\approx d\,a(\rho)\).  It follows that
\[
 L=\min(n,d)\approx d\min(a(\rho),1)\approx d\rho.
\]
Then the second estimate in \eqref{e025}
now gives
\[
 dD^2\log h(\rho)\approx d\rho\tau_\rho^2
 \approx L\tau_\rho^2.
\]
The term \(\rho/(1-\rho)^2=\rho\tau_\rho^2\) in the definition of \(V\) in
\eqref{e018} is no larger
than \(d\rho\tau_\rho^2\) because \(d\geq1\),  whence it is also
\(O(L\tau_\rho^2)\).
This proves \eqref{e017} and \eqref{e018}.
\end{proof}

\subsection{Local arc estimates}\label{sthe}

Let $\rho\in(0,1)$. We will analyze the ratios of two theta function  in a complex neighborhood of the circle
\(\abs{z}=\rho\).  Define
\begin{equation}\label{e029}
 r_\rho(\theta,u):=\frac{\vartheta(\rho\e^{i\theta},u)}
 {\vartheta(\rho\e^{i\theta},0)},\qquad
 \ell_\rho(u):=-\log r_\rho(0,u), \qquad  u\in\T.
\end{equation}
Since \(0<r_\rho(0,u)\le1\) by Jacobi's triple product \eqref{e019}, the quantity \(\ell_\rho(u)\) is nonnegative. The logarithm in  \eqref{e029} is initially defined at $\theta=0$. We
now  extend its definition to the whole $\R/(2\pi\Z)$. In fact, write
\begin{equation}\label{e034}
 \mathcal L(z,u):=
 2\sum_{k\geq1}(-1)^{k+1}
 \frac{\cos(2\pi ku)-1}{k}
 \frac{z^k}{1-z^{2k}},
 \qquad \abs{z}<1,\quad u\in\T.
\end{equation}
Then the series converges locally uniformly in the  unit disk and hence defines
an analytic function of \(z\).  We claim that
\begin{equation} \label{ff01} 
     \exp\mathcal L(z,u)
     =
     \frac{\vartheta(z,u)}{\vartheta(z,0)}.
\end{equation}
Once \eqref{ff01} is established, we see  \eqref{e034} actually determines an analytic logarithmic branch;  and moreover, we can define the logarithm in
\eqref{e029} globally by letting \(z:=\rho\e^{i\theta}\) and 
\begin{equation} \label{logd} 
    \log r_\rho(\theta,u):=
     \mathcal L(\rho\e^{i\theta},u), \qquad \theta\in\R/(2\pi\Z),\ u\in\T,
\end{equation}
which
agrees with
$-\ell_\rho(u)$ at $\theta=0$. This is enough for our purpose to derive local arc estimates subsequently. 

It remains to show \eqref{ff01}.  For $J\geq1$,
we put
\[
 P_J(z,u):=
 \prod_{j=1}^J
 \frac{(1+z^{2j-1}\e^{2\pi iu})
       (1+z^{2j-1}\e^{-2\pi iu})}
      {(1+z^{2j-1})^2}
\]
and
\[
 \begin{aligned}
 \mathcal L_J(z,u):=\sum_{j=1}^J\bigl(&
 \log(1+z^{2j-1}\e^{2\pi iu})
 +\log(1+z^{2j-1}\e^{-2\pi iu})\\
 &-2\log(1+z^{2j-1})\bigr),
 \end{aligned}
\]
where every logarithm can be explained by the following expansion
\[
 \log(1+w) =\sum_{k\geq1}\frac{(-1)^{k+1}}{k}w^k,
 \qquad \abs w<1.
\]
Then 
\[
 \begin{aligned}
 \lim_{J\to\infty}\mathcal L_J(z,u)
 &=\sum_{j,k\geq1}\frac{(-1)^{k+1}}{k}z^{(2j-1)k}
 \bigl(\e^{2\pi iku}+\e^{-2\pi iku}-2\bigr)\\
 &=2\sum_{k\geq1}(-1)^{k+1}
 \frac{\cos(2\pi ku)-1}{k}
 \sum_{j\geq1}z^{(2j-1)k}\\
 &=2\sum_{k\geq1}(-1)^{k+1}
 \frac{\cos(2\pi ku)-1}{k}
 \frac{z^k}{1-z^{2k}}
 =\mathcal L(z,u),
 \end{aligned}
\]
where we have used the  fact that the doubles series converges absolutely whenever 
$\abs z<1$, uniformly in $u\in\T$.
On the other hand, Jacobi's triple product formula \eqref{e019} gives
\[
 \lim_{J\to\infty}P_J(z,u)
 =\frac{\vartheta(z,u)}{\vartheta(z,0)}.
\]
Note that for each finite $J$, $
 \exp\mathcal L_J(z,u)=P_J(z,u).$ Passing to the limit in
$\exp\mathcal L_J(z,u)=P_J(z,u)$ proves \eqref{ff01}. 

\begin{lemma}\label{trat}
There is a constant \(\delta\in(0,1)\) such that, whenever
\(0<\rho<1\), \(\abs{\theta}\leq\delta/\tau_\rho\), \(u\in\T\)  and
\(j\in\N\),
\begin{equation}\label{e030}
 \abs{\partial_\theta^j\log r_\rho(\theta,u)}
 \lesssim_j\tau_\rho^j\ell_\rho(u)
\end{equation}
and
\begin{equation}\label{e031}
 \abs{r_\rho(\theta,u)}\leq\e^{-\ell_\rho(u)/2}.
\end{equation}
Moreover,
\begin{equation}\label{e032}
 \ell_\rho(u)\approx a(\rho)\norm{u}_{\T}^2,
 \qquad \mathrm{for\ all}\ \ 0<\rho<1,\ u\in\T.
\end{equation}
\end{lemma}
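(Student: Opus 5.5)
The plan is to work with the explicit series \eqref{e034} for the logarithm together with the product representation. Write $z=\rho\e^{i\theta}$ and $c_k(u):=1-\cos(2\pi ku)\ge0$, so that
\[
 \log r_\rho(\theta,u)=-2\sum_{k\ge1}(-1)^{k+1}\frac{c_k(u)}{k}\frac{z^k}{1-z^{2k}},
 \qquad
 \ell_\rho(u)=2\sum_{k\ge1}(-1)^{k+1}\frac{c_k(u)}{k}\frac{\rho^k}{1-\rho^{2k}} .
\]
Because of the alternating signs, bounding the series termwise by absolute values does not directly yield $\ell_\rho(u)$. I would therefore work instead with the product form $\log r_\rho=\sum_{j\ge1}\log\bigl(1-q_j(z)\,\bigr)$. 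Here
\[
 1-q_j:=\frac{(1+w_j\e^{2\pi iu})(1+w_j\e^{-2\pi iu})}{(1+w_j)^2},\qquad w_j=z^{2j-1},
 \qquad q_j=\frac{2w_j c_1(u)}{(1+w_j)^2}.
\]
With $v_j(z):=\frac{2w_j}{(1+w_j)^2}$ this gives $\log r_\rho=\sum_j\log(1-c_1v_j)$, where $c_1=1-\cos 2\pi u\in[0,2]$. At $\theta=0$ each $v_j\in(0,1/2]$ and $1-c_1v_j\in(0,1]$, so every term of $\ell_\rho=-\sum_j\log(1-c_1v_j(\rho))$ is nonnegative. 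For \eqref{e032} I would first note that $-\log(1-c_1v)\approx c_1 v$ fails only when $c_1v$ is near $1$, which can happen only for $u$ near $1/2$ and $w_j$ near $1$; the logarithm then gives a bound that is even larger, which still fits the comparison because $c_1\approx\norm{u}_\T^2$ up to constants. This yields $\ell_\rho(u)\gtrsim\norm{u}_\T^2\sum_j v_j(\rho)$. The reverse inequality, $\ell_\rho\lesssim$, is cleaner via Landen \eqref{e014} in the dense range: $r_\rho(0,u)=H_s(u)/H_s(0)$ with $s\approx a(\rho)$, and $-\log(H_s(u)/H_s(0))\approx s\norm{u}_\T^2$ by the Gaussian form (uniformly, once $s_*$ is large). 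In the compact range one has $\ell_\rho\approx\rho c_1\approx\rho\norm{u}^2_\T\approx a(\rho)\norm{u}_\T^2$ directly from the first term $j=1$, with the tail $O(\rho^3)$. Finally, $\sum_j v_j(\rho)\approx\sum_j\rho^{2j-1}\approx a(\rho)$ for the lower bound.

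For \eqref{e030}, I would differentiate term by term: $\partial_\theta=iz\partial_z=iD$, and $D^j$ of $\log(1-c_1v_j(z))$ is a combination of products of $D^{i}v_j/(1-c_1v_j)$. The key step is to show that for $|\theta|\le\delta/\tau_\rho$ one has $|1-c_1v_j(z)|\gtrsim1-c_1v_j(\rho)$ (or rather $\gtrsim$ something compatible), and that $|D^iv_j(z)|\lesssim_i (2j-1)^i|w_j|$. The latter is summed against $\rho^{2j-1}$, and $\sum_j (2j-1)^i\rho^{2j-1}\lesssim\tau_\rho^{i}\,a(\rho)$ roughly. Dividing, each derivative costs a factor $\tau_\rho$, giving $\tau_\rho^j a(\rho)c_1\approx\tau_\rho^j\ell_\rho$. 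The dangerous regime is $u$ near $1/2$ with $w_j$ near $-1$ after rotation, where $1+w_j\e^{\pm2\pi iu}$ could vanish; the restriction $|\theta|\le\delta/\tau_\rho$ keeps $\arg w_j=(2j-1)\theta$ small for $j\lesssim\tau_\rho/\delta$. For larger $j$, $|w_j|\le\rho^{2j-1}\le \e^{-c j/\tau_\rho}$ is small enough that those factors stay away from zero. An alternative that avoids this bookkeeping in the dense range is to use Landen with complex $S$, $\e^{-\pi/S}=\rho\e^{i\theta}$, $S\approx s(1+O(\delta))$ in a sector; then $\log r=\log H_S(u)-\log H_S(0)$, and derivatives in $\theta$ correspond to $s^2\partial_S$, each costing $\tau_\rho$. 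The bound $\lesssim S\norm u_\T^2$ comes from the Gaussian dominant term (from the nearest $j$), using Cauchy estimates on a disc of radius $\approx s$ in $S$. I expect this uniform control of $\log H_S(u)-\log H_S(0)$ near $u=1/2$, where two Gaussian terms compete, to be the main obstacle. I would handle it by writing $H_S(u)=\e^{-\pi S\norm{u}_\T^2}(1+E)$ with $|E|\le1-c$ uniformly, which gives $\log H_S(u)=-\pi S\norm u_\T^2+O(1)$, and $O(1)\lesssim s\norm u_\T^2$ since $\norm u_\T\approx 1$ there.

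Finally, \eqref{e031} follows from \eqref{e030} with $j=1$: $\Rea\log r_\rho(\theta,u)\le-\ell_\rho(u)+|\theta|\sup|\partial_\theta\log r|\le-\ell_\rho+C\delta\ell_\rho$. Choosing $\delta$ small so that $C\delta\le1/2$ gives the bound.
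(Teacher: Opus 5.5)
Your arguments for \eqref{e031} (integrate the case $j=1$) and for \eqref{e032} are fine. The product-form lower bound $\ell_\rho(u)\geq c_1\sum_j v_j(\rho)\gtrsim a(\rho)\norm{u}_\T^2$, obtained from $-\log(1-x)\geq x$, is a cleaner, range-free substitute for the paper's case analysis.

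The gap is in your primary, term-by-term route to \eqref{e030} in the dense range. At $u=1/2$ and for $j\lesssim\tau_\rho$ one has $1-c_1v_j(\rho)=\abs{1+\rho^{2j-1}\e^{2\pi iu}}^2/(1+\rho^{2j-1})^2\approx((2j-1)/\tau_\rho)^2$. So for $j=1$ each division by this denominator costs $\tau_\rho^2$, not $\tau_\rho$. Using only $\abs{D^iv_j}\lesssim(2j-1)^i\abs{w_j}$, your bound for the Fa\`a di Bruno term $(c_1\partial_\theta v_j)^J/(1-c_1v_j)^J$ is $\tau_\rho^{2J}(2j-1)^{-J}$. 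This sums over $j$ to $\approx\tau_\rho^{2J}$ (to $\tau_\rho^2\log\tau_\rho$ when $J=1$), whereas the target is $\tau_\rho^J\ell_\rho(1/2)\approx\tau_\rho^{J+1}$. The true size relies on the cancellation $v'(w)=2(1-w)/(1+w)^3$, which your bound discards.

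The route is easy to repair by splitting in $u$:
\begin{itemize}
\item For $\norm{u}_\T\geq1/4$, do not combine the factors. Expand $\log(1+w_j\e^{\pm2\pi iu})$ and $\log(1+w_j)$ separately in powers of $w_j$. Since $\partial_\theta w_j^k=i(2j-1)k\,w_j^k$, this gives $\abs{\partial_\theta^J\log r_\rho}\lesssim_J\sum_j(2j-1)^J\rho^{2j-1}(1-\rho^{2j-1})^{-J}\lesssim_J\tau_\rho^Ja(\rho)\approx\tau_\rho^J\ell_\rho(u)$, for every real $\theta$.
\item For $\norm{u}_\T\leq1/4$, one has $c_1\leq1$, and once $\delta\leq1/2$, $\abs{v_j(\rho\e^{i\theta})}\leq1/2$ on the arc. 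So the denominators are at least $1/2$, and your bookkeeping does give $c_1\tau_\rho^Ja(\rho)\approx\tau_\rho^J\ell_\rho(u)$.
\end{itemize}
This yields \eqref{e030} without Landen's transformation.

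Your alternative via complex Landen, with $\partial_\theta=(i/\pi)S^2\partial_S$, is the paper's route. Obtaining all $S$-derivatives from Cauchy estimates on a disc $\abs{S-s}\leq cs$ is a genuine simplification of the paper's explicit differentiation. As written, though, two steps need correction.
\begin{itemize}
\item The claim $\abs{E}\leq1-c$ is false near $u=1/2$: there the two nearest Gaussians have equal weight and $E\to1$. What you need is $\Rea(1+E)\geq c$, i.e.\ that $\e^{-\pi Sy}$ with $y=1-2\norm{u}_\T$ stays away from $-1$. This holds only because $\abs{\Ima S}/\Rea S$ is small ($=s\theta/\pi=O(\delta)$ on the arc, and $O(c)$ on your Cauchy disc). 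This is exactly where the smallness of $\delta$ enters (the paper's \eqref{e045}).
\item For small $\norm{u}_\T$, factoring out the nearest Gaussian and using $\abs{E_S}\lesssim\e^{-cs}$ gives only $F(S,u)=-\pi Su^2+O(\e^{-cs})$. That is not $O(su^2)$ as $u\to0$. You also need $\log(1+E_S(u))-\log(1+E_S(0))$ to vanish to second order at $u=0$, which the paper derives from evenness in $u$ together with a bound on $\partial_u^2$.
\end{itemize}
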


\begin{proof}
We will treat the compact range and dense range separately like before. Recall \eqref{logd} and \eqref{e034}. Consider first that  \(\rho\leq\rho_*\). We first show  \eqref{e032} in this rage. By  \eqref{e006} and the fact $|\sin(\pi u)|\approx \norm u_\T $, we have  
\begin{equation*}
 1-r_\rho(0,u)=\frac{2}{h(\rho)}
 \sum_{k\geq1}\rho^{k^2}\big(1-\cos(2\pi ku)\big) \ge \frac{2}{h(\rho)} \big(1-\cos(2\pi u)\big) \gtrsim  \rho\norm u_\T^2.
\end{equation*}
Moreover,
\begin{equation*}
 1-\cos(2\pi  k u) = 2\sin^2(\pi  k u) \leq2 k^2\sin^2(\pi u),
\end{equation*}
where the inequality follows from the factorization
\(1- z^ k=(1-z)(1+z+\cdots+z^{ k-1})\) with
\(z=\e^{2\pi iu}\).
Then the trivial estimate \(\sum_{k\geq1}k^2\rho^{k^2}\lesssim\rho\) implies that $1-r_\rho(0,u) \lesssim \rho\norm u_\T^2$.  On the other hand, Jacobi's formula
\eqref{e019} gives, for \(0<\rho\leq\rho_*\),
\[
 r_\rho(0,u)
 =
 \prod_{j\geq1}
 \frac{\abs{1+\rho^{2j-1}\e^{2\pi iu}}^2}
      {(1+\rho^{2j-1})^2} \geq
       \prod_{j\geq1}
       \left(
       \frac{1-\rho_*^{2j-1}}
            {1+\rho_*^{2j-1}}
       \right)^2
       =:c_*>0,
\]
where we have used that
$
 \abs{1+\rho^{2j-1}\e^{2\pi iu}}
 \geq 1-\rho^{2j-1}.
$
Note that $c_*$ depends only on $\rho_*$. As a result, \(r_\rho(0,u)\) is bounded away
from zero uniformly, so 
$$-\log r_\rho(0,u)\approx1-r_\rho(0,u) \approx \rho \norm{u}_{\T}^2,$$ which proves \eqref{e032} on
the compact range, since $a(\rho)\approx \rho$. Next we show   \eqref{e030}. It holds uniformly for \(\abs z\leq\rho_*\) that
\[
 \left|\partial_\theta^j\frac{z^ k}{1-z^{2 k}}\right|
 \leq C_j k^j\rho^ k,\qquad j\geq0.
\]
Therefore,  by straightforward differentiations we obtain
\[
 \abs{\partial_\theta^j\log r_\rho(\theta,u)}
 \lesssim_j\sin^2(\pi u)\sum_{ k\geq1} k^{j+1}\rho^ k
 \lesssim_j  \ell_\rho(u),
\]
which proves for the compact range, since $\tau_\rho\approx1$. It remains to prove \eqref{e031} on the compact range.
By  \eqref{e030}, the integration along the real
line segment $\overline{0,\theta}$ from \(0\) to \(\theta\) yields
 \begin{align}
 \log\abs{r_\rho(\theta,u)}
 &=
 \Rea\log r_\rho(\theta,u) \nonumber\\
 &=
 -\ell_\rho(u)
 +\Rea\int_{\overline{0,\theta}}
 \partial_v\log r_\rho(v,u)\dd v  \label{inal} \\
 &\leq
 -\ell_\rho(u)
 +C\abs{\theta}\tau_\rho\ell_\rho(u). \nonumber
 \end{align}
For \(\abs{\theta}\leq\delta/\tau_\rho\), the last term is bounded by
\(C\delta\ell_\rho(u)\). After decreasing \(\delta\), if necessary,
so that \(C\delta\leq1/2\), we can acquire that $
 \log\abs{r_\rho(\theta,u)}
 \leq-\ell_\rho(u)/2,$
which proves \eqref{e031} for the compact range.

Turn to the case \(\rho>\rho_*\). We write again \(\rho=\e^{-\pi/s}\) with  \eqref{sran}, and define
\begin{equation}\label{e036}
 S(\theta):=\frac{s}{1-is\theta/\pi} = \frac{s+is^2\theta/\pi}{1+s^2\theta^2/\pi^2} .
\end{equation}
If \(\abs{\theta}\leq\delta/s\), then
\begin{equation}\label{e037}
 \Rea S(\theta)\approx\abs{S(\theta)}\approx s \approx \tau_\rho,\qquad
 S^{(m)}(\theta)=m!\left(\frac{i}{\pi}\right)^mS(\theta)^{m+1}.
\end{equation}
By \eqref{e014} we have
\begin{equation}\label{e038}
 \vartheta(\rho\e^{i\theta},u)=S(\theta)^{1/2}H_{S(\theta)}(u). 
\end{equation}
Consequently,
\begin{equation*}
 \log r_\rho(\theta,u)=F(S(\theta),u),\quad \mathrm{with} \quad
 F(S,u):=\log H_S(u)-\log H_S(0).
\end{equation*}

From periodicity and evenness, we may assume that
\(u=\norm{u}_{\T}\in[0,1/2]\).  If \(u\leq1/4\), we
factor out the main contribution 
point:
\begin{equation*}
 H_S(u)=\e^{-\pi Su^2}(1+E_S(u)),\qquad
 E_S(u):=\sum_{ j\neq0}
 \e^{-\pi S( j^2-2 ju)}.
\end{equation*}
For \(0\leq u\leq1/4\), it holds trivially that
\[
  j^2-2 j u\geq\tfrac12 j^2.
\]
Together with \(\Rea S\approx s\), this gives
\[
 \left|\partial_S^k\partial_u^m
 \e^{-\pi S( j^2-2 j u)}\right|
 \leq C_{k,m}s^m(1+\abs j)^{2k+m}\e^{-cs j^2}.
\]
Summing over \( j\neq0\), we obtain
\begin{equation}\label{e041}
 \abs{\partial_S^k\partial_u^m E_S(u)}
 \lesssim_{k,m}s^m\e^{-cs}.
\end{equation}
After enlarging \(s_*\), estimate \eqref{e041} gives
\(\abs{E_S(u)}\leq1/2\) uniformly.  Hence
\(1+E_S(u)\) does not vanish.  Then we can choose its logarithm by continuation from
the positive real point \(S=s\).
Denote
\begin{equation}\label{e013}
 Q_S(u):=\log(1+E_S(u))-\log(1+E_S(0)).
\end{equation}
Notice that
\(H_S(u)\) is even by the change of
variables \( j\mapsto- j\).  Therefore,
\(Q_S(u)=F(S,u)+\pi Su^2\), with \(Q_S\) defined in \eqref{e013}, is even.
In particular,
\(Q_S(0)=\partial_uQ_S(0)=0\), and the same identities hold after any
fixed number of \(S\)-derivatives.  As a consequence,
\[
 \partial_S^kQ_S(u)
 =\int_0^u(u-v)\partial_v^2\partial_S^kQ_S(v)\dd v.
\]
The chain rule for \(\log(1+E_S)\), the bound \(\abs{E_S}\leq1/2\), and
\eqref{e041} show that the integrand is \(O_k(s^2\e^{-cs})\).  Hence, for
every fixed \(k\geq0\),
\begin{equation}\label{e042}
 \abs{\partial_S^kQ_S(u)}
 \lesssim_k u^2s^2\e^{-cs}. 
\end{equation}
Since \(F(S,u)=-\pi S u^2+Q_S(u)\), then at $S=s$, estimate \eqref{e042} gives  \eqref{e032} for $\rho>\rho_*$ and $ u\le1/4$ by enlarging $s_*$, due to now $s\approx \tau_\rho\approx a(\rho)$. And moreover,  by \eqref{e042} we also have
\begin{equation}\label{e043}
 \abs{\partial_S^kF(S,u)}
 \lesssim_k u^2s^{1-k},
 \qquad k\geq0.
\end{equation}

If \(1/4\leq u\leq1/2\), both nearest integer points
contribute at the same order, so we need to include both terms.  That is, 
\begin{equation} \label{factH} 
 H_S(u)=\e^{-\pi S u^2}
 \big(1+\e^{-\pi Sy}+E_S^{(2)}(u)\big),
\end{equation}
where
\begin{equation*}
    y:=1-2 u, \qquad
     E_S^{(2)}(u) :=\sum_{j\in\Z\setminus\{0,1\}}
     \e^{-\pi S(j^2-2j u)}.
\end{equation*}
For every remaining \(j\), the quantity
\(j^2-2j u\) is bounded below by
\(c(1+j^2)\) uniformly. Then  similar termwise
differentiation  as before implies that
\begin{equation}\label{e044}
    \abs{\partial_S^kE_S^{(2)}(u)}\lesssim_k\e^{-cs}, \qquad k\ge0.
\end{equation}
The first two terms in \eqref{factH} cannot cancel. In fact, there is a universal $c_0$ such that
\begin{equation}\label{e045}
 \abs{1+\e^{-\pi Sy}}\geq c_0>0.
\end{equation}
To see this, note that $\Rea S>0$; if
\(sy\geq1\), the second term $\e^{-\pi Sy}$ has modulus at most \(\e^{-c}\), so we are done; if
\(sy\leq1\), then \(\Rea{S}\,y\lesssim1, \,\abs{\Ima S}\,y\lesssim\delta\), which will yield  \eqref{e045} by choosing
 \(\delta\) small enough.
Meanwhile, after another increase of \(s_*\), the error in \eqref{factH} is at most
\(c_0/2\) by  \eqref{e044}.  It follows that
\begin{equation} \label{ffpp} 
   1\gtrsim  \abs{1+\e^{-\pi Sy}+E_S^{(2)}(u)}\geq c_0/2.
\end{equation}
Thus we can write
\begin{equation}\label{e128}
 F(S,u)=-\pi S u^2
 +\log\big(1+\e^{-\pi Sy}+E_S^{(2)}(u)\big)-\log H_S(0).
\end{equation}
The term \(-\log H_S(0)\) in \eqref{e128} and all of its $S$-derivatives of
any $k$-th order are
\(O_k(\e^{-cs})\).  By  \eqref{e044} and  \eqref{ffpp} we have, for
\(k\geq1\),
\[
 \left|\partial_S^k  \log(1+\e^{-\pi Sy}+E_S^{(2)}(u)) \right|
 \leq C_ky^k\e^{-csy}\leq C_ks^{-k}.
\]
We conclude that
\begin{equation}\label{e046}
 \abs{\partial_S^kF(S,u)}\lesssim_ks^{1-k},
 \qquad k\geq0,
\end{equation}
where the case \(k=0\) follows directly from the expression for
\(F(S,u)\) in \eqref{e128}.  At \(S=s\), the leading term
\(\pi s u^2\) in \(-F(s,u)\) is at
least \(\pi s/16\), whereas both logarithmic terms are \(O(1)\).  This
 gives  \eqref{e032} for $\rho>\rho_*$ and $1/4\le u\le1/2$ by enlarging $s_*$.  
 
To sum up, in the dense range we have proved \eqref{e032}; and moreover, from  \eqref{e043} and \eqref{e046} it follows that, for all $u\in[0,1/2]$,
\begin{equation}\label{ff90}
 \abs{\partial_S^kF(S,u)}\lesssim_k u^2 s^{1-k},
 \qquad k\geq0,
\end{equation}

We now transfer the estimates in \(S\) back to the angular variable $\theta$.  For
every fixed \(j\geq1\), the Fa\`a di Bruno formula expresses
\(\partial_\theta^jF(S(\theta),u)\) as a finite sum of the following terms:
\[
 \partial_S^kF(S(\theta),u)
 \prod_{r=1}^j\big(S^{(r)}(\theta)\big)^{k_r},
 \quad \mathrm{where} \quad
 k=\sum_{r=1}^jk_r,
 \quad \sum_{r=1}^jrk_r=j.
\]
By \eqref{e037}, each such product has size \(O_j(s^{j+k})\).
Estimates  \eqref{ff90} therefore shows that each term has
size \(O_j( u^2s^{j+1})\), whence 
\begin{equation*}
 \abs{\partial_\theta^jF(S(\theta),u)}
 \lesssim_js^j\ell_\rho(u),\qquad j\geq 1.
\end{equation*}
This gives  \eqref{e030} for the dense range, since \(\tau_\rho\approx s\).
Finally, utilizing again  \eqref{inal} gives
\begin{equation*}
 \log\abs{r_\rho(\theta,u)}
 \leq-\ell_\rho(u)+C\abs{\theta}\tau_\rho\ell_\rho(u),
\end{equation*}
which yields \eqref{e031} for the dense rage
by decreasing \(\delta\) once more so that \(C\delta\leq1/2\). The proof of Lemma  \ref{trat} is then completed. 
\end{proof}

For the \(\pi\)-centered major arc, we use the rational factor
\begin{equation}\label{e049}
 \omega_\rho:=\frac{1-\rho}{1+\rho},\qquad
 R_\rho(\theta):=\frac{(1+\rho)(1-\rho\e^{i\theta})}
 {(1-\rho)(1+\rho\e^{i\theta})}.
\end{equation}

We now prove a uniform gap outside neighborhoods of \(0\) and \(\pi\) in
the dense range.  For
\(s>0\), define
\begin{equation}\label{e052}
 \mu_s(k):=\frac{\e^{-\pi k^2/s}}{Z_s},\qquad
 Z_s:=\sum_{k\in\Z}\e^{-\pi k^2/s},
\end{equation}
and
\begin{equation}\label{e053}
 \Phi_s(\theta,u):=\sum_{k\in\Z}\mu_s(k)
 \e^{i(\theta k^2+2\pi uk)},\qquad \theta\in\R,\quad u\in\T.
\end{equation}
For our purpose, we also need  the following lemma. 
\begin{lemma}\label{qrig}
There are absolute constants \(\eta_*\in(0,1)\) and \(C>0\) such that, for
every \(H\in\N\), \(\beta\in\R\), and \(k\in\{1,2\}\),
\begin{equation}\label{e130}
 \max_{1\leq h\leq H}\norm{\beta h^k}_{\T}\leq\eta
 \quad\Longrightarrow\quad
 \norm{\beta}_{\T}\leq\frac{C\eta}{H^k},
 \qquad0<\eta\leq\eta_*.
\end{equation}
\end{lemma}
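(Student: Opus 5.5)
The plan is to prove both cases of \eqref{e130} by one elementary induction on \(h\). The idea is to propagate the hypothesis from \(h\) to \(h+1\) while the numbers \(\beta h^k\) stay inside a window where \(\norm{\cdot}_{\T}\) agrees with the absolute value. I expect this to give \(C=1\) and \(\eta_*=1/8\).

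Since \(\norm{\beta h^k}_{\T}\) depends only on \(\beta\) modulo \(\Z\), replace \(\beta\) by its representative in \((-1/2,1/2]\). By the symmetry \(\beta\mapsto-\beta\), we may also assume \(\beta\geq0\). Then \(\norm{\beta}_{\T}=\beta\), and the case \(h=1\) of the hypothesis gives \(\beta\leq\eta\).

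The inductive claim is that \(\beta h^k\leq\eta\) for every \(1\leq h\leq H\). The base case \(h=1\) was just obtained. Suppose \(\beta h^k\leq\eta\) for some \(h<H\). Since \((h+1)/h\leq2\),
\[
 0\leq\beta(h+1)^k=\beta h^k\Bigl(\frac{h+1}{h}\Bigr)^k\leq 2^k\eta\leq4\eta\leq\tfrac12
\]
for \(k\in\{1,2\}\) and \(\eta\leq\eta_*:=1/8\). A real number in \([0,1/2]\) has distance to \(\Z\) equal to itself. Therefore \(\beta(h+1)^k=\norm{\beta(h+1)^k}_{\T}\leq\eta\) by the hypothesis at \(h+1\), which closes the induction. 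Taking \(h=H\) gives \(\norm{\beta}_{\T}=\beta\leq\eta/H^k\), which is \eqref{e130} with \(C=1\).

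The only delicate point is the mechanism of the induction: the hypothesis controls \(\beta h^k\) only modulo \(1\). What prevents wrapping around an integer is the bounded growth ratio \((h+1)^k/h^k\leq4\) together with the smallness \(\eta_*\). No Diophantine input such as Dirichlet approximation or Weyl sums is needed. The choice \(\eta_*=1/8\) is exactly what keeps \(4\eta\leq1/2\).
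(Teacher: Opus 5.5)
Your proof is correct, with $C=1$ and $\eta_*=1/8$, and it is essentially the paper's argument: both rule out $\beta_0 h^k$ jumping past the window where $\norm{\cdot}_{\T}$ agrees with $\abs{\cdot}$. The paper takes the least $h$ with $\abs{\beta_0}h^k\geq1/4$ and bounds the additive increment from $h-1$ to $h$, which for $k=2$ needs the estimate $\abs{\beta_0}h^2<1/4+\sqrt{\abs{\beta_0}}+\abs{\beta_0}$; your forward induction instead uses the stronger hypothesis $\beta h^k\leq\eta$ and the ratio bound $((h+1)/h)^k\leq4$, which handles both values of $k$ at once and gives the constants explicitly.
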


\begin{proof}
Choose the representative \(\beta_0\in[-1/2,1/2]\).  The case \(h=1\)
implies \(\abs{\beta_0}\leq\eta\); moreover,
\(\norm{\beta h^k}_{\T}=\norm{\beta_0h^k}_{\T}\) for integral \(h\).  If
\(\abs{\beta_0}H^k\leq1/4\), then no reduction modulo one occurs at
\(h=H\), and the hypothesis gives
\(\abs{\beta_0}H^k\leq\eta\).  Otherwise, take the least integer
\(h\leq H\) for which \(\abs{\beta_0}h^k\geq1/4\).  Necessarily
\(h\geq2\).  If \(k=1\), then the
minimality shows that
\[
 \abs{\beta_0}h<\frac14+\abs{\beta_0}
 \leq\frac14+\eta<\frac12;
\]
and if \(k=2\), minimality will yield
\(\abs{\beta_0}(h-1)^2<1/4\), and so
\[
 \abs{\beta_0}h^2
 <\frac14+\sqrt{\abs{\beta_0}}+\abs{\beta_0}<\frac12,
\]
when \(\eta_*\) is sufficiently small.  Thus, in either case,
\(1/4\leq\abs{\beta_0}h^k<1/2\), so
\(\norm{\beta h^k}_{\T}\geq1/4\), contrary to the hypothesis.  The first
alternative must hold, and \eqref{e130} follows.
\end{proof}

\begin{proposition}\label{invp}
After increasing the fixed \(s_*\), if necessary, there are absolute
constants \(\delta_*>0\) and \(C>0\) such that, for every
\(0<\delta_0\leq\delta_*\), there is
\(\varepsilon_0=\varepsilon_0(\delta_0)\in(0,1/2)\) for which the following
implication holds.  If
\(s\geq s_*\), \(\theta\in\R\), \(u\in\T\), and
\begin{equation}\label{e054}
 \abs{\Phi_s(\theta,u)}\geq1-\varepsilon_0,
\end{equation}
then, for some \(\nu\in\{0,1\}\) it holds that
\begin{equation}\label{e055}
 \dist(\theta,\nu\pi+2\pi\Z)\leq\frac{\delta_0}{s},\qquad
 \dist(u,\nu/2+\Z)\leq\frac{C\delta_0}{\sqrt{s}}.
\end{equation}
In particular, if \(\dist(\theta,\pi\Z)>\delta_0/s\), then
\begin{equation}\label{e056}
 \sup_{u\in\T}\abs{\Phi_s(\theta,u)}\leq1-\varepsilon_0.
\end{equation}
\end{proposition}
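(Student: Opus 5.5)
The plan is to read $\Phi_s(\theta,u)$ as a characteristic function. Put $\psi(k):=\theta k^2+2\pi uk$, so that $\Phi_s(\theta,u)=\sum_k\mu_s(k)\e^{i\psi(k)}$. If $\abs{\Phi_s}\geq1-\varepsilon_0$, choose $\alpha\in\R$ with $\Phi_s(\theta,u)=\abs{\Phi_s(\theta,u)}\e^{i\alpha}$. Then
\[
 \sum_k\mu_s(k)\bigl(1-\cos(\psi(k)-\alpha)\bigr)\leq\varepsilon_0 .
\]
Fix a small parameter $\eta>0$ depending on $\delta_0$; it will eventually be comparable to $\delta_0$ and at most the $\eta_*$ of Lemma~\ref{qrig}. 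Call $k$ \emph{good} if $\dist(\psi(k)-\alpha,2\pi\Z)\leq\eta$. By Chebyshev's inequality, the bad set has $\mu_s$-mass at most $C\varepsilon_0/\eta^2$.

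The next step converts this mass bound into a counting bound on a window. For $s\geq s_*$ we have $Z_s\approx\sqrt s$, for instance by \eqref{e014}. Hence $\mu_s(k)\approx s^{-1/2}$ uniformly for $\abs k\leq 3\sqrt s$. Consequently the number of bad integers in $[-3\sqrt s,3\sqrt s]$ is at most $C'\varepsilon_0\sqrt s/\eta^2$. Now choose $\varepsilon_0$ so small (depending on $\eta$, hence on $\delta_0$) that this count is less than $\tfrac1{10}\sqrt s$. Enlarging $s_*$ guarantees that the window contains many integers. Put $H:=\lfloor\sqrt s\rfloor$. Then for every $1\leq h\leq H$ there is some $k$ with $\abs k\leq\sqrt s$ such that $k-h$, $k$ and $k+h$ are all good. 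This follows by pigeonholing, since each bad integer excludes at most three values of $k$.

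With such a triple we bound the second difference. Since $\psi(k+h)-2\psi(k)+\psi(k-h)=2\theta h^2$, we get $\dist(2\theta h^2,2\pi\Z)\leq4\eta$, that is, $\norm{(\theta/\pi)h^2}_{\T}\leq 2\eta/\pi$ for all $h\leq H$. Lemma~\ref{qrig} with exponent $2$ then gives $\norm{\theta/\pi}_{\T}\leq C\eta/H^2\leq C''\eta/s$. Thus $\theta=\nu\pi+\theta'+2\pi j$ for some $\nu\in\{0,1\}$ and $\abs{\theta'}\leq C''\pi\eta/s$. Choosing $\eta\leq\delta_0/(C''\pi)$ yields the first half of \eqref{e055}.

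For $u$ we use that $k^2\equiv k\pmod 2$. This gives $\psi(k)\equiv 2\pi(u+\nu/2)k+\theta'k^2\pmod{2\pi}$. For $\abs k,\abs{k\pm h}\leq3\sqrt s$, the correction $\theta'\bigl((k+h)^2-k^2\bigr)$ is $O(\eta)$. Taking good pairs $k,k+h$ supplied by the same counting, first differences give $\norm{(u+\nu/2)h}_{\T}\leq C\eta$ for all $h\leq H$. Lemma~\ref{qrig} with exponent $1$ then yields $\norm{u+\nu/2}_{\T}\leq C\eta/\sqrt s$. Since $-\nu/2\equiv\nu/2\pmod1$, this is the second half of \eqref{e055} with $C\delta_0$ in place of $C\eta$. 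Finally, \eqref{e056} is the contrapositive: if $\dist(\theta,\pi\Z)>\delta_0/s$, then \eqref{e055} fails for both values of $\nu$.

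The main obstacle is the bookkeeping of constants. Three choices must be ordered correctly: $\delta_*$ so small that $\eta\lesssim\delta_0$ stays below $\eta_*$ and the $O(\eta)$ errors stay below Lemma~\ref{qrig}'s threshold; $\varepsilon_0\ll\eta^2$; and $s_*$ large. We also need the uniform lower density $\mu_s(k)\gtrsim s^{-1/2}$ on $\abs k\leq3\sqrt s$, which follows from the Gaussian form of $\mu_s$ together with $Z_s\approx\sqrt s$.
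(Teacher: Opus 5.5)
Your proposal is correct and follows essentially the same route as the paper. Both arguments use concentration of the $\mu_s$-weighted phases, a count of bad integers on a window of length $\approx\sqrt s$ via $\mu_s\gtrsim s^{-1/2}$, good triples $k-h,k,k+h$ obtained by pigeonholing, second differences $2\theta h^2$ with Lemma~\ref{qrig} for exponent $2$, and then first differences with Lemma~\ref{qrig} for exponent $1$.

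The only real deviation is in locating $u$, and it is a slight streamlining of the paper's argument. You first use $\nu\pi k^2\equiv\nu\pi k\pmod{2\pi}$ and then take forward differences with step $h$, which gives $\norm{(u+\nu/2)h}_{\T}\lesssim\eta$ directly. The paper instead uses the symmetric difference $z_{k+h}\overline{z_{k-h}}$, which controls only $2u$, and therefore needs a separate adjacent-good-pair argument to select the parity.
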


\begin{proof}
Notice that a comparison of the defining series for \(Z_s\) with its corresponding Gaussian integral
yields \(Z_s\approx\sqrt{s}\).  We take
\begin{equation*}
 H:=\lfloor\sqrt{s}\rfloor,\qquad
 I_H:=\{k\in\Z:\abs{k}\leq4H\}.
\end{equation*}
Once \(s_*\) is large enough, we must have
\begin{equation}\label{e057}
 H\geq\frac12\sqrt{s},\qquad
 \mu_s(k)\geq\frac{c_0}{\sqrt{s}}\quad(k\in I_H).
\end{equation}
For later simplicity we write
\begin{equation}\label{e051}
 P(k):=\theta k^2+2\pi uk,\qquad z_k:=\e^{iP(k)},\qquad
 M:=\sum_k\mu_s(k)z_k,\qquad \zeta:=\frac{M}{\abs M}.
\end{equation}
We may assume \(\varepsilon_0<1/2\), so \(M\neq0\) and \(\zeta\) in
\eqref{e051} is well defined.  Then the assumption
\eqref{e054} implies
\begin{equation}\label{e058}
 \sum_k\mu_s(k)\abs{z_k-\zeta}^2=2(1-\abs{M})
 \leq2\varepsilon_0.
\end{equation}
For a small \(\eta>0\), we define the sets of bad points and good points  by
\begin{equation}\label{e062}
 B_H:=\{k\in I_H:\abs{z_k-\zeta}>\eta\},\qquad
 G_H:=I_H\setminus B_H,
\end{equation}
respectively. From \eqref{e057}, \eqref{e058}, and \eqref{e062} it follows that
\begin{equation*}
 2\varepsilon_0
 \geq\sum_{k\in B_H}\mu_s(k)\abs{z_k-\zeta}^2
 \geq\frac{c_0\eta^2}{\sqrt{s}}\#B_H,
 \quad \mathrm{and\ so}\quad
 \#B_H\leq\frac{2\varepsilon_0\sqrt{s}}{c_0\eta^2}.
\end{equation*}
We thus can choose \(\varepsilon_0\leq c_1\eta^2\), with \(c_1\) small enough so
that \(\#B_H<H/10\).

We first locate the quadratic coefficient $\theta$.  For each \(1\leq h\leq H\),
since the number of points $k\in (B_H-h)\cup B_H\cup(B_H+h)$ with \(\abs{k}\leq2H\) is only at most
\[
 \#\big((B_H-h)\cup B_H\cup(B_H+h)\big)
 \leq3\#B_H<\frac{3H}{10},
\]
we may choose $k=k(h)$
for which \(k-h,k,k+h\in G_H\). 
This implies their corresponding points $z_{k-h},z_{k},z_{k+h} $  lie within the circle centered at \(\zeta\) of radius \(\eta\).
As a result,
\begin{equation*}
\abs{\e^{2i\theta h^2}-1} =  \left|z_{k+h}z_{k-h}\overline{z_k}^{\,2}-1\right| \lesssim\eta,
 \qquad \forall\, 1\leq h\leq H.
\end{equation*}
From this, 
with the notation \(\alpha:=\theta/\pi\), we obtain
\begin{equation}\label{e061}
 \norm{\alpha h^2}_{\T}\lesssim\eta,\qquad1\leq h\leq H,
\end{equation}
where we have used
\begin{equation}\label{e083}
 \abs{\e^{2\pi iv}-1}\approx\norm v_{\T},\qquad v\in\R.
\end{equation}
Consequently, Lemma~\ref{qrig}, applied with \(k=2\) to \eqref{e061}, and the relation
\(H^2\approx s\)  give
\begin{equation}\label{e063}
 \dist(\theta,\pi\Z)= \pi\norm \alpha_\T      \lesssim\frac{\eta}{s}.
\end{equation}

We next locate the linear coefficient $u$, by an  analogous method. Owing to  \eqref{e063},  we may write, up to modulo \(2\pi\),
\begin{equation}\label{e084}
 \theta=\nu\pi+\theta_0,\qquad \nu\in\{0,1\}\quad  \textrm{with} \quad
 \abs{\theta_0}\lesssim\frac{\eta}{s}.
\end{equation}
For each \(1\le h\le H\), we take the good triple \(k-h,k,k+h\)  chosen in the  preceding procedure. Then
\begin{equation}\label{e064}
 \abs{\e^{i(4\theta kh+4\pi uh)}-1} = | z_{k+h}\overline{z_{k-h}} -1 | \lesssim\eta.
\end{equation}
Since \(\abs{k}\leq2H\) and \(H^2\lesssim s\), the
bound for \(\theta_0\) in \eqref{e084} shows that 
\(|4\theta_0kh|\lesssim\eta\), while
\(4\nu\pi kh\in2\pi\Z\).  Therefore, \eqref{e064} and \eqref{e083} lead to that
\begin{equation*}
 \norm{2uh}_{\T}\lesssim\eta,\qquad1\leq h\leq H.
\end{equation*}
Hence by Lemma~\ref{qrig}, with \(k=1\) and \(\beta=2u\), we arrive at that
\begin{equation}\label{e066}
 \dist(2u,\Z)\lesssim\frac{\eta}{H}.
\end{equation}

It remains to select the parity.  To obtain the required adjacent
pair, let us inspect the disjoint pairs
\(\{2j,2j+1\}\subset[-2H,2H]\).  If each pair contained a bad point,
then we must have \(\#B_H\geq H\), which will be contrary to the fact that \(\#B_H<H/10\).
For this reason, there are two adjacent points \(k,k+1\), with \(\abs{k}\leq2H\), are good, which implies that
\begin{equation*}
 |\e^{i(\theta(2k+1)+2\pi u)}-1| = |z_{k+1}\overline{z_k}-1|\lesssim\eta.
\end{equation*}
Since \(\abs{k}\leq2H\), by  \eqref{e063} we have
\(|\theta_0(2k+1)|\lesssim \eta/\sqrt{s}\). Then we deduce that
\begin{equation}\label{e125}
 \dist(u+\nu/2,\Z)\lesssim\eta.
\end{equation}
Choose \(m\in\Z\) such that
\(\abs{2u-m}\lesssim\eta/H\), as permitted by \eqref{e066}.  If
\(m\not\equiv\nu\pmod2\), then \eqref{e125} has left side
\(1/2+O(\eta/H)\), which is impossible for small \(\eta\).  Thus
\(m\equiv\nu\pmod2\), and hence
\[
 \dist(u,\nu/2+\Z)\lesssim\frac{\eta}{H}.
\]

To complete the proof, we now specify the order of the parameter choices.  First enlarge \(s_*\)
so that \eqref{e057} holds.  Given
\(0<\delta_0\leq\delta_*\), take \(\eta:=c\delta_0\), where \(c\) is small
enough for both applications of Lemma~\ref{qrig} and for the constant in
\eqref{e063}.  We then set \(\varepsilon_0:=c_1\eta^2\).  Letting
\(\delta_*\) sufficiently small ensures all stated upper bounds on \(\eta\)
and \(\varepsilon_0\).  These choices prove \eqref{e055}, while its contrapositive gives
\eqref{e056}.
\end{proof}

Let $0<\rho<1$. We define the normalized theta quotient by setting:
\begin{equation}\label{e068}
 \Phi_\rho(\theta,u):=\frac{\vartheta(\rho\e^{i\theta},u)}{h(\rho)}, \qquad \theta\in\R, \ u\in\T.
\end{equation}
We next fix a logarithm of \(\Phi_\rho\) on the whole set
\(\R\times\T\).  As in \eqref{e034}, let us put
\begin{equation}\label{hlog}
 \mathcal H(z):=
 -\sum_{j,k\geq1}\frac{z^{2jk}}{k}
 +2\sum_{j,k\geq1}(-1)^{k+1}
 \frac{z^{(2j-1)k}}{k},
 \qquad \abs z<1.
\end{equation}
Both series in \eqref{hlog} converge locally uniformly in the unit disk, and thus $ \mathcal H$ is analytic in the unit disk. Via a similar argument as in the proof  \eqref{ff01}, we obtain
\[
 \exp\mathcal H(z)=h(z),
 \qquad \mathcal H(\rho)=\log h(\rho).
\]
Together with \eqref{ff01}, this allows us to define
\begin{equation}\label{plog}
 \log\Phi_\rho(\theta,u):=
 \mathcal H(\rho\e^{i\theta})-\mathcal H(\rho)
 +\mathcal L(\rho\e^{i\theta},u),
  \qquad \theta\in\R, \ u\in\T.
\end{equation}
Note that, by \eqref{ff01} and \eqref{hlog} we have
\[
 \exp\bigl(\log\Phi_\rho(\theta,u)\bigr)
 =\frac{h(\rho\e^{i\theta})}{h(\rho)}
 \frac{\vartheta(\rho\e^{i\theta},u)}
 {h(\rho\e^{i\theta})}
 =\Phi_\rho(\theta,u).
\]
The branch in \eqref{plog} is \(2\pi\)-periodic in \(\theta\) and
\(1\)-periodic in \(u\).  Moreover,
\[
 \log\Phi_\rho(0,u)=-\ell_\rho(u),
 \qquad
 \log\Phi_\rho(\theta,0)
 =\mathcal H(\rho\e^{i\theta})-\mathcal H(\rho).
\]

The following elementary
estimate will be be useful.

\begin{lemma}\label{fgap}
For every \(0<\rho<1\), \(\theta\in\R\), and \(u\in\T\),
\begin{equation}\label{e069}
 1-\abs{\Phi_\rho(\theta,u)}^2
 \geq\frac{4\rho}{h(\rho)^2}
 \big(1-\cos \theta\cos(2\pi u)\big).
\end{equation}
\end{lemma}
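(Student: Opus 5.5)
Write $\Phi_\rho(\theta,u)=\sum_{k\in\Z}p_k\e^{i\phi_k}$ with $p_k:=\rho^{k^2}/h(\rho)$, a probability distribution on $\Z$, and $\phi_k:=\theta k^2+2\pi uk$. Expanding the modulus squared and using $\sum_k p_k=1$ gives the variance-type identity
\[
 1-\abs{\Phi_\rho(\theta,u)}^2
 =\sum_{j,k\in\Z}p_jp_k\bigl(1-\cos(\phi_j-\phi_k)\bigr)
 =\frac12\sum_{j,k\in\Z}p_jp_k\abs{\e^{i\phi_j}-\e^{i\phi_k}}^2 .
\]
Every term is nonnegative, so I will discard all pairs except those involving $k=0$ and $j=\pm1$.

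These are the ordered pairs $(\pm1,0)$ and $(0,\pm1)$. Each carries weight $p_0p_{\pm1}=\rho/h(\rho)^2$. The phase differences are $\phi_{\pm1}-\phi_0=\theta\pm2\pi u$. Counting both orders, the surviving terms give
\[
 1-\abs{\Phi_\rho}^2\geq\frac{2\rho}{h(\rho)^2}\Bigl[\bigl(1-\cos(\theta+2\pi u)\bigr)+\bigl(1-\cos(\theta-2\pi u)\bigr)\Bigr].
\]
The sum-to-product identity $\cos(\theta+2\pi u)+\cos(\theta-2\pi u)=2\cos\theta\cos(2\pi u)$ turns the bracket into $2(1-\cos\theta\cos 2\pi u)$. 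This yields exactly \eqref{e069} with the constant $4\rho/h(\rho)^2$.

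The only point needing care is the bookkeeping of the factor $1/2$ against the two orderings of each pair. One could also keep the pair $(1,-1)$, but its phase difference $4\pi u$ is not needed. There is no genuine obstacle, since convergence is absolute for $\rho<1$.
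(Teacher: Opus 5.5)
Your proof is correct and is essentially the paper's argument. The paper likewise writes $1-\abs{\Phi_\rho(\theta,u)}^2=h(\rho)^{-2}\sum_{k,l}\rho^{k^2+l^2}\bigl(1-\cos(W_k-W_l)\bigr)$ with nonnegative summands, and keeps only the four ordered pairs $(0,\pm1)$, $(\pm1,0)$; your handling of the orderings and the sum-to-product step are both right.
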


\begin{proof}
We prove \eqref{e069} by a direct algebraic expansion.  Put $W_k:=\theta k^2+2\pi uk$.
Since \(h(\rho)=\sum_ka_k\), then using the symmetry under \((k,l)\mapsto(l,k)\) we have
\begin{equation*}
 1-\abs{\Phi_\rho(\theta,u)}^2
 =\frac1{h(\rho)^2}\sum_{k,l} \rho^{k^2+l^2}
 \big(1-\cos(W_k-W_l)\big).
\end{equation*}
Every summand on the right is nonnegative.  Restricting the sum to the ordered
pairs \((0,1)\), \((1,0)\), \((0,-1)\), and \((-1,0)\) gives
\[
 \frac{2\rho}{h(\rho)^2}
 \big(2-\cos(\theta+2\pi u)-\cos(\theta-2\pi u)\big),
\]
which equals exactly the right side of \eqref{e069}.
\end{proof}

From Lemma~\ref{fgap} and Proposition~\ref{invp}, we can derive the following proposition, which will be used to in the next subsection.

\begin{proposition}\label{twow}
Let \(n,d\in\N\).  There are absolute \(\delta,c>0\) such that, with
\(\rho=\rho_{n,d}\) and \(L=\min(n,d)\), the following statements hold.
For \(\nu\in\{0,1\}\) and every \(\theta\in\R\) satisfying
\begin{equation}\label{e071}
 \dist(\theta,\nu\pi+2\pi\Z)\leq\frac{\delta}{\tau_\rho},
\end{equation}
one has, uniformly in \(\xi\in\T^d\),
\begin{equation}\label{e072}
 \prod_{j=1}^d\abs{\Phi_\rho(\theta,\xi_j)}
 \leq\exp\left(-cL\tau_\rho^2
 \dist(\theta,\nu\pi+2\pi\Z)^2-cU_{\nu,\rho}(\xi)\right),
\end{equation}
where
\begin{equation*}
 U_{\nu,\rho}(\xi):=\sum_{j=1}^d
 \ell_\rho(\xi_j+\nu/2).
\end{equation*}
If  \(\theta\)
lies outside both arcs in \eqref{e071}, or equivalently,  \(\dist(\theta,\pi\Z)>\delta/\tau_\rho\), then
\begin{equation}\label{e074}
 \sup_{\xi\in\T^d}\prod_{j=1}^d
 \abs{\Phi_\rho(\theta,\xi_j)}\leq\e^{-cL}.
\end{equation}
\end{proposition}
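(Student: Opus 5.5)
We split according to the compact and dense ranges of \(\rho\), and treat the major arcs \eqref{e071} and their complement separately.

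\textbf{Step 1: the arc at \(0\).} We write
\[
 \Phi_\rho(\theta,u)=\frac{h(\rho\e^{i\theta})}{h(\rho)}\,r_\rho(\theta,u).
\]
By \eqref{e031}, \(\abs{r_\rho(\theta,u)}\leq\e^{-\ell_\rho(u)/2}\) for \(\abs{\theta}\leq\delta/\tau_\rho\). Taking the product over \(j\) gives the factor \(\e^{-U_{0,\rho}(\xi)/2}\).

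For the \(h\)-quotient we Taylor expand \(\log h(\rho\e^{i\theta})-\log h(\rho)\) in \(\theta\). Since \(\partial_\theta=iD\), the real part equals
\[
 -\tfrac12\theta^2D^2\log h(\rho)+O\bigl(\abs{\theta}^4\rho\tau_\rho^4\bigr).
\]
The odd-order terms are purely imaginary, and the derivative bounds come from \eqref{e025}. With \(\abs{\theta}\tau_\rho\leq\delta\) small, the lower bound \(D^2\log h(\rho)\approx\rho\tau_\rho^2\) dominates the error. Hence
\[
 \abs{h(\rho\e^{i\theta})/h(\rho)}^d\leq\exp\bigl(-cd\rho\tau_\rho^2\theta^2\bigr).
\]
By \eqref{e017}, \(d\rho\approx L\), which gives \eqref{e072} for \(\nu=0\).

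\textbf{Step 2: the arc at \(\pi\).} We use the identity \(\vartheta(-z,u)=\vartheta(z,u+1/2)\), which follows from \((-1)^{k^2}=(-1)^k\). Writing \(\theta=\pi+\theta_0\), we get
\[
 \Phi_\rho(\theta,u)=\Phi_\rho(\theta_0,u+1/2).
\]
Applying Step 1 then yields \eqref{e072} with \(U_{1,\rho}\).

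\textbf{Step 3: off both arcs, compact range \(\rho\leq\rho_*\).} Here \(\tau_\rho\approx1\), so \(\dist(\theta,\pi\Z)>\delta/\tau_\rho\) implies \(\dist(\theta,\pi\Z)\gtrsim\delta\). For every \(u\),
\[
 1-\cos\theta\cos(2\pi u)\geq1-\abs{\cos\theta}\gtrsim\delta^2.
\]
Since \(h(\rho)\approx1\), Lemma~\ref{fgap} gives \(\abs{\Phi_\rho}^2\leq1-c\rho\). Therefore the product over \(j\) is at most \(\e^{-cd\rho}\leq\e^{-cL}\).

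\textbf{Step 4: off both arcs, dense range \(\rho=\e^{-\pi/s}\), \(s>s_*\).} We observe that \(\Phi_\rho(\theta,u)=\Phi_{s}(\theta,u)\) exactly, since \(\mu_s(k)=\rho^{k^2}/h(\rho)\) and \(Z_s=h(\rho)\). Because \(\tau_\rho\approx s\), the condition \(\dist(\theta,\pi\Z)>\delta/\tau_\rho\) implies \(\dist(\theta,\pi\Z)>\delta_0/s\) for some fixed \(\delta_0\), which we shrink to at most \(\delta_*\). By \eqref{e056}, every factor is then at most \(1-\varepsilon_0\), so the product is at most \(\e^{-\varepsilon_0d}\leq\e^{-cL}\).

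\textbf{Main obstacle.} The delicate point is the uniform Taylor control in Step 1 in the dense range. There we need the quartic remainder to be dominated by the quadratic term uniformly in \(\rho\), and this is exactly what the common scale in \eqref{e025} provides. The identification in Step 4, which lets Proposition~\ref{invp} apply directly, is routine once the \(\delta\) versus \(\delta_0\) constants are matched.
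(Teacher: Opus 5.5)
Your four steps follow the paper's proof. The structural ingredients all match: the factorization via $\vartheta(-z,u)=\vartheta(z,u+1/2)$, the bound \eqref{e031} for the theta ratios, Lemma~\ref{fgap} in the compact range, and, in the dense range, the identification $\Phi_\rho=\Phi_s$ with \eqref{e056} and the matching $\delta_0/s\leq\delta/\tau_\rho$.

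The one step that is not justified as written is the one you single out as the main obstacle. Taylor's formula with integral remainder needs $\abs{\partial_t^4\log h(\rho\e^{it})}\lesssim\rho\tau_\rho^4$ at the \emph{complex} points $\rho\e^{it}$ with $\abs t\leq\abs\theta\leq\delta/\tau_\rho$. But \eqref{e025} only controls $D^r\log h$ at the real point $\rho$.

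In the compact range this is harmless. The termwise majorant used to prove \eqref{e025} is the same on the whole circle $\abs z=\rho$. In the dense range it fails. The coefficients $b_m$ of the series \eqref{hlog} are positive for odd $m$ and negative for even $m$, with $\abs{b_m}\geq1$. So the termwise majorant $\sum_m m^r\abs{b_m}\rho^m$ is at least of order $\tau_\rho^{r+1}$, one power of $\tau_\rho$ worse than \eqref{e025}. The best this route gives for your quartic remainder is $O(d\tau_\rho^5\theta^4)$. At $\abs\theta\approx\delta/\tau_\rho$ this is not dominated by $d\tau_\rho^2\theta^2$ once $\tau_\rho\gg\delta^{-2}$. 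So \eqref{e025} does not supply the uniformity you attribute to it; a genuinely complex input is needed.

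The paper closes this with the complex Landen transformation already set up in the proof of Lemma~\ref{trat}. By \eqref{e038} with $u=0$, up to an additive constant,
\[
\log h(\rho\e^{i\theta})=\tfrac12\log S(\theta)+\log h(\e^{-\pi S(\theta)}).
\]
Fix $\abs\theta\leq\delta/s$. By \eqref{e037}, the $r$-th $\theta$-derivative of the first term is $O_r(s^r)$. That of the second is $O_r(s^{2r}\e^{-cs})$, since $\Rea S(\theta)\approx s$. Hence $\abs{\partial_\theta^r\log h(\rho\e^{i\theta})}\lesssim_r\rho\tau_\rho^r$ uniformly on the arc in both ranges. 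With this input your Step~1 goes through as stated. The paper uses the cubic remainder $O(d\,a(\rho)\tau_\rho^2\abs{v}^3)$ rather than your quartic one, which makes no difference.
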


\begin{proof}
We first choose the constants in the local and global estimates consistently.
Choose \(s_*\) so that the dense-range estimates in Lemma~\ref{trat} hold for
every \(s\geq s_*\), and then take \(\delta\) smaller than every local
constant fixed earlier.  In the dense range,
\(c_1s\leq\tau_\rho\leq c_2s\).  We may therefore choose a fixed
\(0<\delta_0\leq\delta_*\), depending only on \(\delta\), such that
\(\delta_0/s\leq\delta/\tau_\rho\) for every \(s\geq s_*\).  We invoke
Proposition~\ref{invp} with this \(\delta_0\) and fix the resulting
\(\varepsilon_0\).  Thus the local and minor arcs use compatible constants.

We begin with the proof of the local estimate  \eqref{e072}.  Write
\(\theta=\nu\pi+v\) modulo \(2\pi\) with $\nu\in\{0,1\}$ and $|v|\le \delta/\tau_\rho$. Note that a direct verification leads to  the identity
\begin{equation}\label{e075}
 \vartheta(-z,u)=\vartheta(z,u+1/2).
\end{equation}
Then we have the exact factorization
\begin{equation}\label{e116}
\prod_{j=1}^d\abs{\Phi_\rho(\theta,\xi_j)} = \prod_{j=1}^d\abs{\Phi_\rho(\nu\pi+v,\xi_j)}
 =\abs{\Phi_\rho(v,0)}^d
 \prod_{j=1}^d\abs{r_\rho(v,\xi_j+\nu/2)},
\end{equation}
for both $\nu=0$ and $\nu=1$. 
Owing to Lemma~\ref{trat}, we can bound the second factor by
\(\exp(-U_{\nu,\rho}(\xi)/2)\).  For the first factor, we shall  make use of the logarithmic branch chosen in
\eqref{plog}.  Since \(\mathcal L(z,0)=0\) by  \eqref{e034}, formula \eqref{plog} gives
\[
 \log\Phi_\rho(v,0)
 =\mathcal H(\rho\e^{iv})-\mathcal H(\rho).
\]
Consequently, a differentiation at \(v=0\) yields
\begin{equation*}
 \left.\frac{\dd}{\dd v}d\log\Phi_\rho(v,0)\right|_{v=0}
 =idD\log h(\rho)=id\,m(\rho),
\end{equation*}
which is purely imaginary and therefore does not
contribute to the logarithm of the modulus.  The
second derivative equals
\begin{equation*}
 \left.\frac{\dd^2}{\dd v^2}
 d\log\Phi_\rho(v,0)\right|_{v=0}
 =-dD^2\log h(\rho).
\end{equation*}
Following a similar line of reasoning as in the proof of Lemma  \ref{trat}, in the compact range, three termwise differentiations of the logarithmic
series obtained from the Jacobi product \eqref{e020} produce the bound
\(Cd\rho\); while in the dense range,
we differentiate \eqref{e038} with \(u=0\) and use \eqref{e037}; the main
term has size \(Cd s^3\), with an exponentially smaller transformed theta remainder.  In both cases, the third derivative on the local arc $|v|\le \delta/\tau_\rho$
is therefore bounded by \(Cd\,a(\rho)\tau_\rho^2\).  The estimate
\(dD^2\log h(\rho)\approx L\tau_\rho^2\) in \eqref{e017} identifies the
quadratic scale.
Consequently, from Taylor's formula with integral remainder it follows that
\begin{equation}\label{e126}
 \Rea\big(d\log\Phi_\rho(v,0)\big)
 =-\frac{dD^2\log h(\rho)}2v^2
 +O\big(d\,a(\rho)\tau_\rho^2\abs{v}^3\big).
\end{equation}
By \eqref{e017}, the remainder term in \eqref{e126}, divided by
\(dD^2\log h(\rho)v^2\), is \(O(\tau_\rho\abs{v})\).  After a further
decrease of \(\delta\),
through \eqref{e126} we can get
\begin{equation*}
 \abs{\Phi_\rho(v,0)}^d
 \leq\exp(-cL\tau_\rho^2v^2).
\end{equation*}
Combining this with \eqref{e116} proves \eqref{e072}, for both
values of \(\nu\).

We are left to establish a fixed gap away from these arcs.
For \(\rho\leq\rho_*\), we have \(\tau_\rho\approx1\), and every point
outside the two arcs has \(1-\abs{\cos \theta}\geq c\).  Since
\[
 1-\cos\theta\cos(2\pi u)\geq1-\abs{\cos\theta},
\]
then Lemma~\ref{fgap}, together with the uniform boundedness of \(h(\rho)\) on
this range, implies that \(1-\abs{\Phi_\rho(\theta,u)}^2\geq c\rho\).
The inequality \(\sqrt{1-x}\leq\e^{-x/2}\, (0<x<1)\), after adjusting the absolute
constant, then shows that
\(\sup_u\abs{\Phi_\rho(\theta,u)}\leq\e^{-c\rho}\).  This, together with the fact that \(d\rho\approx L\) by \eqref{e017},  establishes \eqref{e074}
in the compact range.  In the dense range, write
\(\rho=\e^{-\pi/s}\), where \(s\approx\tau_\rho\).  If \(\theta\) is
outside both arcs, then \(h(\rho)=Z_s\), and the definitions
\eqref{e052}, \eqref{e053}, and \eqref{e068} give
\(\Phi_\rho(\theta,u)=\Phi_s(\theta,u)\).  We may therefore apply \eqref{e056}
uniformly to every coordinate \(u=\xi_j\).  Therefore
\[
 \prod_{j=1}^d\abs{\Phi_\rho(\theta,\xi_j)}
 \leq(1-\varepsilon_0)^d\leq\e^{-\varepsilon_0d}.
\]
Since \(L\lesssim d\) by  \eqref{e017}, we obtain \eqref{e074}.
\end{proof}

\subsection{Weighted coefficient extraction and Gaussian kernel bounds}\label{ssad}

Having localized the coefficient integral, we now evaluate its contribution
on either major arc.  The amplitude may depend on every Fourier coordinate.

Let \(n,d\in\N\), put \(\rho:=\rho_{n,d}\), and define
\begin{equation}\label{e085}
 I_0:=[-\delta/\tau_\rho,\delta/\tau_\rho],
\end{equation}
where \(\delta\) is fixed as in Proposition~\ref{twow}.  Throughout this
subsection, the real variable \(\theta\) belongs to the interval \(I_0\) in
\eqref{e085}, and all local moments and weighted integrals are taken over this
interval.  The logarithm of \(h(\rho\e^{i\theta})\) is the branch
\(\mathcal H(\rho\e^{i\theta})\) defined in \eqref{hlog}.

We set
\begin{equation}\label{e076}
 \Psi(\theta):=\e^{-in\theta}\frac{1-\rho}{1-\rho\e^{i\theta}}
 \left(\frac{h(\rho\e^{i\theta})}{h(\rho)}\right)^d=\e^{K(\theta)},
\end{equation}
where \(K(0)=0\).  Explicitly, for \(\theta\in I_0\),
\begin{equation}\label{e077}
 \begin{aligned}
 K(\theta):={}&-in\theta
 +d\bigl(\mathcal H(\rho\e^{i\theta})-\mathcal H(\rho)\bigr)
 +\log(1-\rho)
 +\sum_{j\geq1}\frac{\rho^j\e^{ij\theta}}{j}.
 \end{aligned}
\end{equation}
The last series in \eqref{e077} is absolutely convergent and equals
\(-\log(1-\rho\e^{i\theta})\).  Hence \eqref{e076} holds and \(K(0)=0\).  Notice
that \(K\) is a logarithmic lift on \(\R\), rather than a periodic logarithm:
\(K(\theta+2\pi)=K(\theta)-2\pi in\).  Only its restriction to \(I_0\) will be
used below.
The saddle-point equation \(n=dm(\rho)+a(\rho)\) in \eqref{e008} gives
\begin{equation}\label{e078}
 K'(0)=0,\qquad K''(0)=-V,
 \qquad
 K'''(0)=-i\left(dD^3\log h(\rho)
 +\frac{\rho(1+\rho)}{(1-\rho)^3}\right).
\end{equation}

On the \(0\)-centered major arc, we use the following uniform complex
derivative bounds.

\begin{lemma}\label{kder}
For every fixed \(r\in\{3,4\}\) and
\(\abs{\theta}\leq\delta/\tau_\rho\),
\begin{equation}\label{e079}
 \abs{K^{(r)}(\theta)}\lesssim_rL\tau_\rho^r,
 \qquad \Rea K(\theta)\leq-cV\theta^2.
\end{equation}
\end{lemma}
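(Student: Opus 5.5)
We prove the two bounds in \eqref{e079} separately, splitting $K$ into its theta part and its rational part. By \eqref{e077}, for $r\geq2$,
\[
 K^{(r)}(\theta)=d\,\frac{\dd^r}{\dd\theta^r}\mathcal H(\rho\e^{i\theta})
 +\frac{\dd^r}{\dd\theta^r}\bigl(-\log(1-\rho\e^{i\theta})\bigr),
\]
since the linear term $-in\theta$ and the constant $\log(1-\rho)$ disappear. Because $\frac{\dd}{\dd\theta}=iD$ on functions of $z=\rho\e^{i\theta}$, the theta part equals $i^r d\,(D^r\mathcal H)(\rho\e^{i\theta})$.

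\textbf{Derivative bound.} We bound the theta part on the arc $\abs\theta\leq\delta/\tau_\rho$ exactly as in the third-derivative estimate in the proof of Proposition~\ref{twow}, splitting into the two ranges.
\begin{itemize}
\item \emph{Compact range.} Termwise differentiation of the absolutely convergent logarithmic series \eqref{hlog} gives $\abs{D^r\mathcal H(z)}\lesssim_r\abs z\leq\rho$ for $\abs z=\rho\leq\rho_*$. Hence the theta part is $O_r(d\rho)=O_r(L)$ by \eqref{e017}, and $\tau_\rho\approx1$.
\item \emph{Dense range.} Write $\rho=\e^{-\pi/s}$. By \eqref{e038} with $u=0$ we have $\mathcal H(\rho\e^{i\theta})=\tfrac12\log S(\theta)+\log H_{S(\theta)}(0)$ up to an additive constant. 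By \eqref{e037}, $\partial_\theta^r\log S(\theta)$ is a polynomial expression in $S(\theta)$ of size $O_r(s^r)$. By Fa\`a di Bruno together with \eqref{e041}, the term $\log H_S(0)=\log(1+E_S(0))$ contributes $O_r(s^{2r}\e^{-cs})$. So the theta part is $O_r(ds^r)$. Since $s\approx\tau_\rho$ and $d\approx L/\rho\approx L$ in this range, this is $O_r(L\tau_\rho^r)$.
\end{itemize}

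For the rational part, $\frac{\dd^r}{\dd\theta^r}\bigl(-\log(1-\rho\e^{i\theta})\bigr)=i^r\sum_{j\geq1}j^{r-1}\rho^j\e^{ij\theta}$. Its modulus is at most $\sum_j j^{r-1}\rho^j\lesssim_r\rho\tau_\rho^r$, which is $\lesssim L\tau_\rho^r$ because $L\geq1$. (Alternatively, on the arc one has $\abs{1-\rho\e^{i\theta}}\gtrsim1-\rho$ once $\delta$ is small, and each derivative costs a factor $\tau_\rho$.) This proves the first bound in \eqref{e079} for $r=3,4$.

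\textbf{Real part bound.} We use Taylor's formula at $0$ with \eqref{e078}: $K(0)=K'(0)=0$ and $K''(0)=-V$. This gives
\[
 \Rea K(\theta)=-\tfrac{V}{2}\theta^2+\Rea\int_0^\theta\tfrac{(\theta-t)^2}{2}K'''(t)\dd t .
\]
By the $r=3$ bound, the remainder is at most $CL\tau_\rho^3\abs\theta^3$. Using $V\approx L\tau_\rho^2$ from \eqref{e018}, this is at most $C'V\tau_\rho\abs\theta\,\theta^2\leq C'\delta V\theta^2$. After decreasing $\delta$ (which Proposition~\ref{twow} allows) so that $C'\delta\leq1/4$, we get $\Rea K(\theta)\leq-\tfrac14V\theta^2$.

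\textbf{Main obstacle.} The only delicate step is the dense-range control of the complex theta derivatives uniformly on the arc. It needs $\Rea S(\theta)\approx s$ there, so that the transformed remainder $E_S(0)$ stays exponentially small. The rest is bookkeeping of the scales in Lemma~\ref{spar}.
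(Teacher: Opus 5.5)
Your proposal is correct and follows essentially the same route as the paper. You use the same splitting of $K^{(r)}$ into a theta part and a geometric part, and the same compact/dense dichotomy, with the transformation via $S(\theta)$ in the dense range. The real-part bound comes from the same third-order Taylor argument with $V\approx L\tau_\rho^2$ and a decrease of $\delta$. The only cosmetic difference is that you bound the geometric term uniformly by $\sum_j j^{r-1}\rho^j\lesssim_r\rho\tau_\rho^r$, whereas the paper estimates it separately in each range.
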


\begin{proof}
We again consider the two parameter ranges separately.  In the
compact range, differentiate the absolutely
convergent logarithmic series arising from \eqref{e020}.  As in the
compact-range proof of \eqref{e025}, the \(r\)-th derivative of the theta logarithm is
\(O_r(\rho)\), uniformly on the fixed arc.  The   term
\(-\log(1-\rho\e^{i\theta})\) in \(K(\theta)\) in \eqref{e077} has the
series
\begin{equation}\label{e090}
 -\log(1-\rho\e^{i\theta})
 =\sum_{j\geq1}\frac{\rho^j\e^{ij\theta}}j,
\end{equation}
whose \(r\)-th derivative is bounded by
\(\sum_{j\geq1}j^{r-1}\rho^j\lesssim_r\rho\).  This last series comes from
the geometric series \eqref{e090}.  Thus the theta contribution to \(K^{(r)}\) is at most
\(C_rd\rho\), and the geometric contribution is at most \(C_r\rho\).  By
Lemma~\ref{spar}, their sum is
\(O_r(L\tau_\rho^r)\).

In the dense range, use \(S(\theta)\) from \eqref{e036}.  The complex form
of \eqref{e023} is
\begin{equation*}
 h(\rho\e^{i\theta})=S(\theta)^{1/2}h(\e^{-\pi S(\theta)}).
\end{equation*}
The logarithm of the right side is
\begin{equation}\label{e162}
 \tfrac12\log S(\theta)+\log h(\e^{-\pi S(\theta)}).
\end{equation}
Formula \eqref{e037} shows directly that the \(r\)-th derivative of the
first term in \eqref{e162} is \(O_r(s^r)\).  Since \(\Rea S\approx s\),
termwise differentiation of the second term in \eqref{e162} gives
\(O_r(s^{2r}\e^{-cs})\), which is smaller.  The geometric term is bounded
by
\begin{equation*}
 \sum_{j\geq1}j^{r-1}\rho^j\lesssim_rs^r.
\end{equation*}
Since \(L \approx d\) and \(s\approx\tau_\rho\), this proves the first estimate.
For the real part, Taylor's formula, \eqref{e078}, and the case \(r=3\)
give
\begin{equation}\label{e091}
 \Rea K(\theta)=-\frac{V\theta^2}{2}
 +O(L\tau_\rho^3\abs{\theta}^3).
\end{equation}
Because \(V\approx L\tau_\rho^2\), the remainder in \eqref{e091} is at
most one quarter of \(V\theta^2\) after we decrease \(\delta\).  This proves the
second estimate.
\end{proof}

We formulate the local expansion with the normalization used in coefficient
extraction.

\begin{lemma}\label{edge}
There is an absolute constant \(L_1\geq1\) such that the following holds.
For \(q\geq0\), let
\begin{equation}\label{e092}
 J_q:=\int_{I_0}\theta^q\Psi(\theta)\dd\theta.
\end{equation}
If \(L\geq L_1\),
then
\begin{equation}\label{e082}
 J_0=\sqrt{\frac{2\pi}{V}}\big(1+O(L^{-1})\big),
\end{equation}
and, for every fixed integer \(q\geq0\),
\begin{equation}\label{e133}
 \frac{\int_{I_0}\abs{\theta}^q\abs{\Psi(\theta)}\dd\theta}{\abs{J_0}}
 \lesssim_q V^{-q/2}.
\end{equation}
\end{lemma}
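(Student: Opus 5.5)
The argument is a standard Laplace/saddle-point evaluation on \(I_0\), built on Lemma~\ref{kder}. Put \(\lambda:=V^{-1/2}\), so that by \eqref{e018} \(\lambda\approx L^{-1/2}\tau_\rho^{-1}\). The relevant scale is \(\abs{\theta}\lesssim\lambda\). This scale is much smaller than the arc length \(\delta/\tau_\rho\), because \(\lambda\tau_\rho\approx L^{-1/2}\). Write \(K(\theta)=-V\theta^2/2+R(\theta)\), where
\[
R(\theta)=\tfrac16K'''(0)\theta^3+O(L\tau_\rho^4\theta^4)
\]
by \eqref{e078} and \eqref{e079}.

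\textbf{Moment bound \eqref{e133}.} The numerator is the easy part. By the second estimate in \eqref{e079}, \(\abs{\Psi(\theta)}\le\e^{-cV\theta^2}\) on \(I_0\). Extending the integral to \(\R\) and rescaling \(\theta=\lambda t\) gives
\[
\int_{I_0}\abs{\theta}^q\abs{\Psi(\theta)}\dd\theta\lesssim_q V^{-(q+1)/2}.
\]
So \eqref{e133} follows once we know \(\abs{J_0}\gtrsim V^{-1/2}\), which is a consequence of \eqref{e082} for \(L\ge L_1\).

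\textbf{Asymptotics \eqref{e082}.} Split \(I_0\) into the inner region \(\abs{\theta}\le L^{\kappa}\lambda\), with for instance \(\kappa=1/10\), and its complement.

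On the outer region, \(\abs{\Psi}\le\e^{-cL^{2\kappa}}\). Its contribution is therefore \(O(V^{-1/2}L^{-10})\), which is negligible.

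On the inner region, \(\abs{R(\theta)}\lesssim L\tau_\rho^3\abs{\theta}^3\lesssim L^{-1/2+3\kappa}\), which is small. Expand
\[
\e^{R}=1+\tfrac16K'''(0)\theta^3+O\bigl(\abs{K'''(0)}^2\theta^6+L\tau_\rho^4\theta^4\bigr)
\]
with an exponential-type remainder. Integrate each term against \(\e^{-V\theta^2/2}\), extending back to all of \(\R\) at exponentially small cost:
\begin{itemize}
\item The constant term gives \(\sqrt{2\pi/V}\).
\item The cubic term integrates to zero by oddness.
\item The quartic remainder contributes \(\lesssim L\tau_\rho^4V^{-5/2}\approx V^{-1/2}L^{-1}\).
\item The sextic remainder contributes \(\lesssim L^2\tau_\rho^6V^{-7/2}\approx V^{-1/2}L^{-1}\).
\end{itemize}
This yields \(J_0=\sqrt{2\pi/V}\,(1+O(L^{-1}))\). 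Choosing \(L_1\) large makes the error at most \(1/2\), which gives the lower bound on \(\abs{J_0}\) needed above.

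\textbf{Main obstacle.} The delicate point is to obtain the \(O(L^{-1})\) error rather than \(O(L^{-1/2})\). This requires the cubic term to cancel exactly by symmetry of the Gaussian. For that cancellation to be clean, the second-order Taylor remainder must be controlled in the form \(\abs{\e^{w}-1-w}\le\abs{w}^2\e^{\abs{w}}\). With \(w=R\), one needs \(\abs{R(\theta)}\le V\theta^2/4\) on the whole of \(I_0\), not only on the inner region, so that
\[
\abs{\e^{R}-1-R}\le\abs{R}^2\e^{V\theta^2/4}.
\]
This inequality follows from \eqref{e079} after shrinking \(\delta\), exactly as in \eqref{e091}. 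Writing \(R\) as the cubic term plus the quartic Taylor remainder then gives the error bounds listed above.
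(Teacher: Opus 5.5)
Your proposal is correct and follows essentially the same route as the paper: a third-order Taylor expansion of $K$ with the quartic remainder from Lemma~\ref{kder}, an inner region (the paper takes $\abs{y}\le L^{1/16}$ with $y=\sqrt V\theta$) on which the exponential is expanded and the odd cubic term integrates to zero, Gaussian decay from \eqref{e079} on the rest of $I_0$, and the Gaussian majorant together with the lower bound on $\abs{J_0}$ to get \eqref{e133}. The global bound $\abs{R}\le V\theta^2/4$ on all of $I_0$ that you flag as the main obstacle is true, but your own inner/outer split does not need it: on the inner region $\abs{R}\lesssim L^{-1/5}$ already gives $\abs{\e^{R}-1-R}\lesssim\abs{R}^2$, and the outer region is handled directly by \eqref{e079}.
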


\begin{proof}
We separate the central region from the part controlled by Gaussian decay.
We combine Lemma~\ref{kder} with Taylor's formula and its integral
remainder to obtain
\begin{equation}\label{e086}
 K(\theta)=-\frac{V\theta^2}{2}+\frac{K'''(0)\theta^3}{6}
 +R_4(\theta),\qquad
 \abs{R_4(\theta)}\lesssim L\tau_\rho^4\abs{\theta}^4.
\end{equation}
Put
\begin{equation}\label{e100}
 y:=\sqrt{V}\theta,\qquad
 \alpha:=\frac{K'''(0)}{V^{3/2}},\qquad
 w(y):=\frac{\alpha y^3}{6}+R_4(y/\sqrt V).
\end{equation}
The derivative estimate \eqref{e079} and the remainder bound in
\eqref{e086} yield
\begin{equation}\label{e087}
 \abs{\alpha}\lesssim L^{-1/2},\qquad
 \abs{R_4(y/\sqrt V)}\lesssim\frac{y^4}{L}.
\end{equation}
For \(\abs{y}\leq L^{1/16}\), we expand the exponential and find
\begin{equation}\label{e088}
 \Psi(y/\sqrt V)=\e^{-y^2/2}
 \left(1+\frac{\alpha y^3}{6}+E_L(y)\right),\qquad
 \abs{E_L(y)}\lesssim\frac{y^4+y^6}{L}.
\end{equation}
Indeed, on this interval both terms in \(w(y)\) from \eqref{e100} have
modulus at most a fixed small constant.  More precisely, \eqref{e087} and
\(\abs y\leq L^{1/16}\) give
\[
 \abs{\alpha y^3}\lesssim L^{-5/16},\qquad
 \abs{R_4(y/\sqrt V)}\lesssim L^{-3/4}.
\]
For large \(L\), the elementary estimate
\(\abs{\e^w-1-w}\lesssim\abs{w}^2\) is therefore uniform, and
\[
 \abs{\e^w-1-\alpha y^3/6}
 \lesssim\abs{R_4(y/\sqrt V)}+\abs{\alpha}^2y^6.
\]
This proves the error bound in \eqref{e088} directly from \eqref{e087}.
The endpoints of the rescaled \(0\)-centered major arc have size comparable to
\(\sqrt L\).  On
\(L^{1/16}\leq\abs y\lesssim\sqrt L\), Lemma~\ref{kder} shows that
\(\abs{\Psi(y/\sqrt V)}\leq\e^{-cy^2}\).  Consequently, the integral of
any fixed power of \(y\) over this region is \(O(\e^{-c'L^{1/8}})\).
The same Gaussian bound allows us to extend the inner integral from
\(\abs y\leq L^{1/16}\) to the real line.  For all fixed integers
\(q,M\geq0\),
the omitted \(q\)-th moment is \(O_{q,M}(L^{-M})\).  In particular, these
terms are smaller than the error bounds in the statement.

The change of variables \(y=\sqrt V\theta\) in \eqref{e100} gives
\begin{equation}\label{e101}
 J_q=V^{-(q+1)/2}\int y^q\Psi(y/\sqrt V)\dd y,
\end{equation}
with the integral initially over the rescaled arc.  The symmetry
\(K(-\theta)=\overline{K(\theta)}\) makes \(J_0\) real.  In
\eqref{e088}, the term proportional to \(y^3\) is odd and integrates to
zero, whereas the integral of \(E_L\) is \(O(L^{-1})\).  This proves
\eqref{e082} and, in particular, shows that \(J_0\) is positive for large
\(L\).  The Gaussian majorant gives \eqref{e133} after the change of variables
in \eqref{e101}.  The odd cubic term vanishes by symmetry, which improves the
relative error in \eqref{e082} from order
\(L^{-1/2}\) to order \(L^{-1}\).
\end{proof}

For \(1<p<2\), we need the following version of arbitrary fixed order.  Its
coefficients are the exact local moments.

\begin{proposition}\label{hwt}
Let \(N\geq0\) be fixed.  Suppose that \(L\geq L_1\), with \(L_1\)
chosen as in Lemma~\ref{edge}, and let \(U\geq0\).  Assume that
\(\mathbb{A}\in C^{N+1}(I_0;\mathbb C)\), with
\begin{equation}\label{e104}
 \abs{\mathbb{A}^{(j)}(\theta)}
 \leq C_j\tau_\rho^j\e^{-cU},
 \qquad 0\leq j\leq N+1,
\end{equation}
where \(c>0\) and the constants \(C_j\) are independent of \(n\), \(d\),
and \(U\).  With the moments \(J_j\) defined in \eqref{e092}, set
\begin{equation}\label{e134}
 b_j:=\frac{J_j}{j!J_0},\qquad j\geq0.
\end{equation}
Then
\begin{equation}\label{e135}
 \abs{b_j}\lesssim_j V^{-j/2},\qquad j\geq0,
\end{equation}
and
\begin{equation}\label{e136}
 \frac{\int_{I_0}\Psi(\theta)\mathbb{A}(\theta)\dd\theta}{J_0}
 =\sum_{j=0}^N b_j\mathbb{A}^{(j)}(0)
 +O_N\left(\frac{\e^{-cU}}{L^{(N+1)/2}}\right).
\end{equation}
The implicit constants may depend on \(N\), \(c\), and the \(C_j\)'s, but
not on \(n\), \(d\), or \(U\).
\end{proposition}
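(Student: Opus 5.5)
The plan is to Taylor-expand the amplitude \(\mathbb A\) at \(\theta=0\) to order \(N\) with integral remainder, integrate term by term against \(\Psi\), and control everything with the moment bounds of Lemma~\ref{edge}.

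\textbf{Coefficient bound \eqref{e135}.} This is immediate from \eqref{e133}. Indeed,
\[
\abs{J_j}\leq\int_{I_0}\abs{\theta}^j\abs{\Psi(\theta)}\dd\theta\lesssim_j V^{-j/2}\abs{J_0},
\]
and \(J_0>0\) for \(L\ge L_1\) by \eqref{e082}. Dividing by \(j!J_0\) gives \(\abs{b_j}\lesssim_j V^{-j/2}\).

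\textbf{Expansion \eqref{e136}.} Since \(\mathbb A\in C^{N+1}(I_0)\), for \(\theta\in I_0\) we write
\[
\mathbb A(\theta)=\sum_{j=0}^N\frac{\mathbb A^{(j)}(0)}{j!}\theta^j+R_{N+1}(\theta),\qquad \abs{R_{N+1}(\theta)}\leq\frac{\abs{\theta}^{N+1}}{(N+1)!}\sup_{I_0}\abs{\mathbb A^{(N+1)}}.
\]
Assumption \eqref{e104} bounds this by \(C_{N+1}\tau_\rho^{N+1}\e^{-cU}\abs\theta^{N+1}/(N+1)!\).

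Multiplying by \(\Psi\), integrating over \(I_0\) and dividing by \(J_0\), the polynomial part yields exactly
\[
\sum_{j=0}^N\frac{\mathbb A^{(j)}(0)}{j!}\,\frac{J_j}{J_0}=\sum_{j=0}^N b_j\mathbb A^{(j)}(0),
\]
by definition \eqref{e134}. No Taylor expansion of the phase is needed here, since the moments are kept exact.

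For the remainder we apply \eqref{e133} with \(q=N+1\):
\[
\frac{1}{\abs{J_0}}\int_{I_0}\abs{\Psi}\abs{R_{N+1}}\dd\theta\lesssim_N \tau_\rho^{N+1}\e^{-cU}V^{-(N+1)/2}.
\]
By \eqref{e018}, \(V\approx L\tau_\rho^2\), so \(\tau_\rho^{N+1}V^{-(N+1)/2}\approx L^{-(N+1)/2}\). This gives the stated error \(O_N(\e^{-cU}L^{-(N+1)/2})\).

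\textbf{Uniformity.} The implicit constants depend only on \(N\), the \(C_j\), and the constants of Lemma~\ref{edge} and Lemma~\ref{spar}. They are independent of \(n\), \(d\), \(U\).

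\textbf{Main point.} The only real input is the weighted moment bound \eqref{e133}, which is already established, together with the scale identity \(V\approx L\tau_\rho^2\) that converts the factors \(\tau_\rho^j\) from \eqref{e104} into powers of \(L^{-1/2}\). So no serious obstacle remains. The one point to check is that the Taylor remainder estimate uses the supremum of \(\mathbb A^{(N+1)}\) over the whole interval \(I_0\), which \eqref{e104} provides uniformly.
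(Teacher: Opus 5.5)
Your proposal is correct and matches the paper's proof step for step. You obtain \eqref{e135} from \eqref{e133}. You Taylor-expand $\mathbb{A}$ with integral remainder controlled by \eqref{e104}, keeping the moments $J_j$ exact for the polynomial part. You bound the normalized remainder by \eqref{e133} with $q=N+1$, and $V\approx L\tau_\rho^2$ from \eqref{e018} converts $\tau_\rho^{N+1}V^{-(N+1)/2}$ into $L^{-(N+1)/2}$.
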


\begin{proof}
Estimation \eqref{e135} follows from \eqref{e133}.  Taylor's formula
with integral remainder and the hypothesis \eqref{e104} give
\begin{equation}\label{e105}
 \mathbb{A}(\theta)=\sum_{j=0}^N\frac{\mathbb{A}^{(j)}(0)}{j!}\theta^j
 +O_N\big(
 \tau_\rho^{N+1}\e^{-cU}
 \abs{\theta}^{N+1}\big).
\end{equation}
We integrate \eqref{e105} against \(\Psi\), divide by \(J_0\), and apply
\eqref{e133} with \(q=N+1\).  The normalized remainder is at most
\[
 C_N\tau_\rho^{N+1}V^{-(N+1)/2}
 \e^{-cU}.
\]
Since \(V\approx L\tau_\rho^2\) by \eqref{e018}, this is the error in
\eqref{e136}.
\end{proof}

We are now in a position to verify the amplitude hypothesis  \eqref{e104}, which appears in Proposition  \ref{hmul}. Recall  \eqref{e049}. For
\(\nu\in\{0,1\}\), we define
\begin{equation}\label{e093}
 \mathbb{A}_{0,\xi}(\theta):=\prod_{j=1}^dr_\rho(\theta,\xi_j),\qquad
 \mathbb{A}_{1,\xi}(\theta):=R_\rho(\theta)
 \prod_{j=1}^dr_\rho(\theta,\xi_j+1/2).
\end{equation}

\begin{lemma}\label{ader}
There is an absolute constant \(c>0\) such that, for every fixed integer
\(j\geq0\), every \(\nu\in\{0,1\}\), every
\(\xi\in\T^d\), and every \(\abs{\theta}\leq\delta/\tau_\rho\),
\begin{equation}\label{e050}
 \abs{R_\rho(\theta)}\approx1,\qquad
 \abs{R_\rho^{(j)}(\theta)}\lesssim_j\tau_\rho^j,
\end{equation}
and
\begin{equation}\label{e094}
 \abs{\mathbb{A}_{\nu,\xi}^{(j)}(\theta)}
 \lesssim_j\tau_\rho^j\e^{-cU_{\nu,\rho}(\xi)}.
\end{equation}
\end{lemma}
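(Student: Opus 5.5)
For \eqref{e050}, write $R_\rho(\theta)=\frac{1+\rho}{1-\rho}\cdot\frac{1-\rho\e^{i\theta}}{1+\rho\e^{i\theta}}$. The denominator satisfies $\abs{1+\rho\e^{i\theta}}\geq 1$ for $\abs{\theta}\leq\delta<\pi/2$. For the numerator we use $\abs{1-\rho\e^{i\theta}}^2=(1-\rho)^2+2\rho(1-\cos\theta)$. Since $\abs{\theta}\leq\delta/\tau_\rho=\delta(1-\rho)$, we have $2\rho(1-\cos\theta)\leq\rho\theta^2\leq\delta^2(1-\rho)^2$, so $\abs{1-\rho\e^{i\theta}}\approx 1-\rho$. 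Hence $\abs{R_\rho(\theta)}\approx 1$ on the arc.

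For the derivatives we use $\log R_\rho(\theta)=\mathrm{const}+\log(1-\rho\e^{i\theta})-\log(1+\rho\e^{i\theta})$. The $j$-th derivative of $\log(1-\rho\e^{i\theta})$ equals $-i^j\sum_{k\geq1}k^{j-1}\rho^k\e^{ik\theta}$. A cleaner route is to note that it is a rational function of $w=\rho\e^{i\theta}$ with a single pole at $w=1$. Each $\theta$-derivative then contributes at most one extra factor $\abs{1-\rho\e^{i\theta}}^{-1}\approx\tau_\rho$, giving $\abs{\partial_\theta^j\log(1-\rho\e^{i\theta})}\lesssim_j\tau_\rho^j$. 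The term $\log(1+\rho\e^{i\theta})$ has derivatives $O_j(1)$. Faà di Bruno applied to $R_\rho=\exp(\log R_\rho)$, together with $\abs{R_\rho}\approx1$, then yields $\abs{R_\rho^{(j)}}\lesssim_j\tau_\rho^j$.

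For \eqref{e094}, write $\mathbb A_{\nu,\xi}=R_\rho^{\nu}\exp(G)$ with
\[
G(\theta):=\sum_{k=1}^d\log r_\rho(\theta,\xi_k+\nu/2),
\]
using the global branch \eqref{logd}. By \eqref{e030} of Lemma~\ref{trat}, $\abs{G^{(i)}(\theta)}\lesssim_i\tau_\rho^iU$ for $i\geq1$, where $U:=U_{\nu,\rho}(\xi)$. By \eqref{e031}, $\abs{\exp G(\theta)}\leq\e^{-U/2}$ on the arc. Faà di Bruno then gives
\[
\abs{\partial_\theta^j\e^{G}}\lesssim_j\e^{-U/2}\sum_{k\leq j}\tau_\rho^jU^k\lesssim_j\tau_\rho^j(1+U)^j\e^{-U/2}\lesssim_j\tau_\rho^j\e^{-U/4},
\]
the polynomial factor being absorbed by halving the exponent. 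Leibniz's rule with \eqref{e050} handles the extra factor $R_\rho$ when $\nu=1$. This gives \eqref{e094} with $c=1/4$, which is absolute and independent of $j$, as required.

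The only delicate point is this absorption of the polynomial factor $(1+U)^j$, which is why $c$ can be taken absolute while the implicit constant depends on $j$. We also check that the branch in \eqref{logd} makes $\exp G$ agree with the product in \eqref{e093}; this holds by \eqref{ff01}.
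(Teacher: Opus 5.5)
Your proposal is correct and follows essentially the same route as the paper: you bound $\abs{1\mp\rho\e^{i\theta}}$ on the arc, apply Fa\`a di Bruno to $\exp(\log R_\rho)$, use Lemma~\ref{trat} to bound the logarithmic derivatives of the theta product by $\tau_\rho^k\,U_{\nu,\rho}(\xi)$, and absorb the factor $(1+U)^j$ into $\e^{-U/2}$. The only differences are cosmetic: you attach $R_\rho$ by Leibniz's rule instead of including $\log R_\rho$ in $\log\mathbb{A}_{1,\xi}$, and you make the constant explicit as $c=1/4$.
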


\begin{proof}
The definitions of \(a(\rho)\) and \(\tau_\rho\) in \eqref{e016} give
\((1-\rho)^{-1}=1+a(\rho)=\tau_\rho\).
On the local arc,
\(\abs{1-\rho\e^{i\theta}}\approx1-\rho\) and
\(\abs{1+\rho\e^{i\theta}}\approx1+\rho\).  Thus
\(\abs{R_\rho(\theta)}\approx1\).  Every fixed derivative of positive order
of
\begin{equation}\label{e106}
 \log R_\rho(\theta)= \log\frac{1+\rho}{1-\rho} +\log(1-\rho\e^{i\theta})
 -\log(1+\rho\e^{i\theta})
\end{equation}
is bounded by \(C_j\tau_\rho^j\).  Applying the Fa\`a di Bruno formula to
\(R_\rho(\theta)=\exp(\log R_\rho(\theta))\), with the logarithm given in
\eqref{e106}, proves
the derivative estimate in \eqref{e050}.

All factors in \eqref{e093} are nonzero on the local arc: this follows
from the Jacobi triple product \eqref{e019} for the theta ratios and from the
definition of \(R_\rho(\theta)\) in \eqref{e049}.  We may therefore choose their
logarithms by continuation from \(\theta=0\).
Lemma~\ref{trat} implies
\begin{equation}\label{e095}
 \abs{\mathbb{A}_{\nu,\xi}(\theta)}\lesssim\e^{-cU_{\nu,\rho}(\xi)},
 \qquad
 \abs{\partial_\theta^k\log \mathbb{A}_{\nu,\xi}(\theta)}
 \lesssim_k\tau_\rho^k(1+U_{\nu,\rho}(\xi)).
\end{equation}
For \(\nu=1\), estimate \eqref{e050} controls the additional rational
factor.  Applying the Fa\`a di Bruno formula to
\(\mathbb{A}_{\nu,\xi}=\exp(\log\mathbb{A}_{\nu,\xi})\) gives a factor
bounded by \(C_j\tau_\rho^j(1+U_{\nu,\rho}(\xi))^j\).  Multiplication by
the first estimate in \eqref{e095} completes the proof, since, for every
\(c_0>0\),
\begin{equation}\label{e107}
 (1+U)^j\e^{-c_0U}\lesssim_{j,c_0}\e^{-c_0U/2}
 \qquad(U\geq0).
\end{equation}
Applying \eqref{e107} proves \eqref{e094} after renaming the exponent
constant.
\end{proof}

We shall compare the Taylor kernels derived from Proposition  \ref{hmul}  with the normalized discrete Gaussians.  We use the
following maximal estimate of Mirek, Szarek, and
Wr\'obel~\cite[(1.5) of Theorem 1.1]{MSW25}.

\begin{theorem}[Mirek--Szarek--Wr\'obel]
\label{gaus}
For \(s>0\), define
\begin{equation}\label{e027}
 g_s(x):=\frac{\e^{-\pi\abs{x}^2/s}}
 {\sum_{k\in\Z^d}\e^{-\pi |k|^2/s}},
 \qquad G_sf:=g_s*f.
\end{equation}
Then for every \(p\in(1,\infty)\), there is \(C_p>0\) independent of
\(d\), such that, for any \(d\in\N\) and \(f\in\ell^p(\Z^d)\),
\begin{equation}\label{e028}
 \norm{\sup_{s>0}\abs{G_sf}}_{\ell^p(\Z^d)}
 \leq C_p \,\norm{f}_{\ell^p(\Z^d)}.
\end{equation}
\end{theorem}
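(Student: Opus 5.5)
The result in Theorem~\ref{gaus} is the external maximal theorem of Mirek, Szarek, and Wr\'obel, which we use as an input. If I had to prove it, I would compare the normalized Gaussians with the discrete heat semigroup, whose maximal function is dimension-free by Stein's general theory, and then show that the difference is controlled by dimension-free square functions.

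\emph{Step 1: the multiplier of \(G_s\) and a comparable heat time.} With \(\rho=\e^{-\pi/s}\), the multiplier of \(G_s\) factors as
\[
 \widehat{g_s}(\xi)=\prod_{j=1}^d r_\rho(0,\xi_j)=\exp\Bigl(-\sum_{j=1}^d\ell_\rho(\xi_j)\Bigr).
\]
By \eqref{e032}, \(\ell_\rho(u)\approx a(\rho)\norm{u}_{\T}^2\). I would therefore compare \(G_s\) with \(P_t=\e^{t\Delta}\), the discrete heat semigroup with multiplier \(\exp(-4t\sum_j\sin^2(\pi\xi_j))\), at the time \(t=t(s)\approx a(\rho)\). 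This time is fixed by matching the quadratic terms of \(\ell_\rho\) and \(4t\sin^2(\pi u)\) at \(u=0\).

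The semigroup \(P_t\) is a symmetric diffusion (Markov) semigroup, so Stein's maximal theorem gives \(\norm{\sup_t\abs{P_tf}}_p\lesssim_p\norm f_p\) for all \(1<p\le\infty\), with no dependence on \(d\). The matching must be done separately near \(\xi_j=0\) and near \(\xi_j=1/2\). In the dense range the Landen form \eqref{e038} shows that \(\ell_\rho\) is a periodized quadratic, whereas \(\sin^2\) is not, so the two multipliers differ near \(1/2\) as well as at large frequencies.

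\emph{Step 2: the error on \(\ell^2\).} Put \(\kappa=a(\rho)\) and \(\abs{\xi}_*^2=\sum_j\norm{\xi_j}_{\T}^2\). I expect
\[
 \abs{\widehat{g_s}(\xi)-\widehat{p_{t(s)}}(\xi)}\lesssim\min\bigl(\kappa\abs{\xi}_*^2,\ (\kappa\abs{\xi}_*^2)^{-1}\bigr),
\]
together with a similar bound for the \(s\)-derivative multiplied by \(s\). These bounds are dimension-free because both multipliers are products of one-dimensional factors that are controlled coordinatewise by \(\ell_\rho\), using \eqref{e030} and \eqref{e043}. They give dimension-free bounds on \(\ell^2\) for the square function \(\bigl(\int\abs{(G_s-P_{t(s)})f}^2\,\frac{ds}{s}\bigr)^{1/2}\) and for the variant with \(s\partial_s\). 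The standard Sobolev-embedding-in-\(s\) argument then controls \(\sup_s\abs{(G_s-P_{t(s)})f}\) on \(\ell^2\).

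\emph{Step 3: extending to \(1<p<2\) (main obstacle).} I would extend \(s\mapsto G_s\) analytically to a complex parameter \(S\) with \(\Rea S>0\), using the second theta representation \eqref{e014}. The goal is to show that \(G_S\) is bounded on \(\ell^p\), uniformly in \(d\), by a power of \(\abs S/\Rea S\). Complex interpolation of fractional derivatives \((s\partial_s)^\alpha\) between the strong \(\ell^2\) decay of Step~2 and these crude \(\ell^{1+}\) bounds should then give the maximal estimate on all \(\ell^p\), \(1<p<2\). The case \(p>2\) follows by interpolation with the trivial \(\ell^\infty\) bound.

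I expect the \(\ell^{1+}\) control of the complex-parameter Gaussians to be the hard part. Since \(\{G_s\}\) is not a semigroup, Stein's complex-time trick is not available directly. It must be replaced by a comparison in which \(G_S\) is expressed through subordination to the heat semigroup, or as an average of translates with positive weights. I would try to obtain such a comparison first from the product formula \eqref{e019}.
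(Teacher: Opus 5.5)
There is no proof of this statement in the paper to compare against. Theorem~\ref{gaus} is quoted from \cite[Theorem~1.1, (1.5)]{MSW25} and used only as a black box, through the pointwise domination \eqref{e144} in Lemma~\ref{jker}. The introduction only records that Mirek, Szarek and Wr\'obel use two theta representations, fractional derivatives, complex interpolation and comparison with the discrete heat semigroup, which matches the general shape of your outline. Your Steps~1--2 are a standard and workable route to the $\ell^2$ bound. Combined with Stein's theorem for $P_t$ and the trivial $\ell^\infty$ bound, they give $2\le p<\infty$. The substance of the theorem, however, is the range $1<p<2$.

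That range rests entirely on Step~3, which you do not carry out, and whose stated target is false.

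\emph{The target reduces to exact contractivity.} Because $g_S$ is a tensor product, testing on tensor products gives $\norm{G_S}_{\ell^p(\Z^d)\to\ell^p(\Z^d)}\ge\norm{G_S}_{\ell^p(\Z)\to\ell^p(\Z)}^d$. So any bound uniform in $d$ amounts to exact contractivity of the one-dimensional operator; ``a power of $\abs S/\Rea S$'' can only mean $\norm{G_S}_{\ell^p(\Z)\to\ell^p(\Z)}\le1$.

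\emph{This fails in every sector, even on $\ell^2$.} Take $S=\sin\phi\,\e^{i\phi}$. Then $\e^{-\pi/S}=-q$ with $q=\e^{-\pi\cot\phi}\in(0,1)$. By \eqref{e075}, the one-dimensional multiplier at $u=1/2$ equals $h(q)/\vartheta(q,1/2)>1$. Hence $\norm{G_S}_{\ell^2(\Z^d)\to\ell^2(\Z^d)}\ge\bigl(h(q)/\vartheta(q,1/2)\bigr)^d\to\infty$, although $\abs S/\Rea S=1/\cos\phi$ stays bounded. The kernel here is real and even, so by duality and Riesz--Thorin the same blow-up occurs on every $\ell^p$.

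\emph{Even the right parametrization gives only $\ell^2$.} The complex parameter must instead be confined through the angle $\theta=-\pi\Ima(1/S)$ of $\e^{-\pi/S}=\rho\e^{i\theta}$, e.g.\ to $\abs\theta\le\delta/\tau_\rho$ as in Lemma~\ref{trat}. Even there you only get $\ell^2$ contractivity. With $w=\e^{-\pi/S}$, the one-dimensional kernel $w^{x^2}/h(w)$ has $\ell^1$ norm $h(\abs w)/\abs{h(w)}>1$ whenever $w\notin(0,1)$. So any region of $\ell^p$-contractivity must shrink as $p\to1$. Compare Stein's sector $\abs{\arg z}<\frac\pi2\bigl(1-\abs{\frac2p-1}\bigr)$ for diffusion semigroups, whose proof uses the semigroup law.

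\emph{The missing idea.} You need exact contractive $\ell^p$ bounds for a family that is not a semigroup, after complexification has destroyed the positivity that makes the real $G_s$ harmless. Mentioning subordination or positive averages of translates does not supply this. As written, your argument establishes the theorem only for $2\le p<\infty$.
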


We now identify the kernels of the Taylor terms.  The resulting estimate
allows us to use an expansion of arbitrary fixed order.  For
\(0<\rho<1\), set
\begin{equation}\label{e137}
 \gamma_\rho(x):=\frac{\rho^{\abs{x}^2}}{h(\rho)^d},\qquad x\in\Z^d.
\end{equation}
Thus \(\gamma_{\exp(-\pi/s)}=g_s\), where \(g_s\) is defined in
\eqref{e027}.

\begin{lemma}\label{tilt}
There is an absolute constant \(\eta>0\) with the following property.  Let
\(n,d\in\N\), \(\rho=\rho_{n,d}\), \(L=\min(n,d)\), and put
\begin{equation*}
 \eta_V:=\frac{\eta}{\sqrt V},\qquad
 \rho_\pm:=\rho\e^{\pm\eta_V},\qquad
 W_\rho(x):=\abs{x}^2-dm(\rho).
\end{equation*}
Then \(0<\rho_-<\rho<\rho_+<1\), and, for every fixed integer \(j\geq0\),
\begin{equation}\label{e139}
 \left(1+\frac{\abs{W_\rho(x)}}{\sqrt V}\right)^j\gamma_\rho(x)
 \lesssim_j\gamma_{\rho_-}(x)+\gamma_{\rho_+}(x),
 \qquad x\in\Z^d.
\end{equation}
\end{lemma}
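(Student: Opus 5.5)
We compute the ratio of the two Gaussians exactly and then split according to the sign of \(W_\rho(x)\). Since \(\rho_\pm^{\abs{x}^2}=\rho^{\abs{x}^2}\e^{\pm\eta_V\abs{x}^2}\), we have
\[
 \frac{\gamma_{\rho_\pm}(x)}{\gamma_\rho(x)}
 =\exp\Bigl(\pm\eta_V\abs{x}^2-d\bigl(\log h(\rho\e^{\pm\eta_V})-\log h(\rho)\bigr)\Bigr).
\]
The first task is to expand the normalizing term in the logarithmic variable \(t=\log\rho\). There \(\frac{\dd}{\dd t}=D\), so
\[
 d\bigl(\log h(\rho\e^{\pm\eta_V})-\log h(\rho)\bigr)
 =\pm\eta_V\,dm(\rho)+\tfrac12\eta_V^2\,dD^2\log h(\tilde\rho)
\]
for some \(\tilde\rho\) between \(\rho_-\) and \(\rho_+\). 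Substituting gives
\[
 \frac{\gamma_{\rho_\pm}(x)}{\gamma_\rho(x)}
 =\exp\Bigl(\pm\eta_V W_\rho(x)-\tfrac12\eta_V^2\,dD^2\log h(\tilde\rho)\Bigr).
\]
If the second term is \(O(1)\) uniformly, then \(\gamma_{\rho_+}+\gamma_{\rho_-}\gtrsim \e^{\eta\abs{W_\rho(x)}/\sqrt V}\gamma_\rho\). Using \((1+X)^j\lesssim_{j,\eta}\e^{\eta X}\) with \(X=\abs{W_\rho}/\sqrt V\) then yields \eqref{e139}.

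The main obstacle is the uniform bound
\[
 \eta_V^2\,dD^2\log h(\tilde\rho)=\frac{\eta^2}{V}\,dD^2\log h(\tilde\rho)\lesssim1 .
\]
We also need \(\rho_+<1\). To get both, we compare \(\tilde\rho\) with \(\rho\) on the scale \(\tau_\rho\). Note that \(\eta_V=\eta/\sqrt V\) and, by \eqref{e018}, \(\sqrt V\approx\sqrt L\,\tau_\rho\geq c\tau_\rho\), so \(\eta_V\leq C\eta/\tau_\rho=C\eta(1-\rho)\).

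First, for \(\rho_+=\rho\e^{\eta_V}<1\) we need \(\eta_V<-\log\rho\). Since \(-\log\rho\geq1-\rho\), this holds once \(C\eta<1\). Fixing \(\eta\) small, we get \(\eta_V\leq\tfrac12(1-\rho)\), hence \(1-\tilde\rho\approx1-\rho\), that is, \(\tau_{\tilde\rho}\approx\tau_\rho\).

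Next we need \(\tilde\rho\approx\rho\), so that \(\tilde\rho\,\tau_{\tilde\rho}^2\approx\rho\,\tau_\rho^2\). In the dense range this is clear. In the compact range, \(\eta_V\lesssim\eta\) is small, so \(\e^{\pm\eta_V}\approx1\).

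By \eqref{e025} we then have \(dD^2\log h(\tilde\rho)\approx d\tilde\rho\,\tau_{\tilde\rho}^2\approx d\rho\,\tau_\rho^2\approx L\tau_\rho^2\approx V\), using \eqref{e017}–\eqref{e018}. Thus the quadratic term is \(O(\eta^2)\), which gives the required bound.

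All constants are absolute and depend only on \(\eta\), \(j\), and the constants in Lemma~\ref{spar}. The inequality \eqref{e139} is then obtained as follows. For \(W_\rho\geq0\) we compare with \(\gamma_{\rho_+}\), and for \(W_\rho<0\) we compare with \(\gamma_{\rho_-}\). In either case
\[
 \Bigl(1+\frac{\abs{W_\rho}}{\sqrt V}\Bigr)^j\gamma_\rho
 \lesssim_j\e^{\eta\abs{W_\rho}/\sqrt V}\gamma_\rho
 \lesssim\gamma_{\rho_\pm}.
\]
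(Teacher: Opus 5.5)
Your proof is correct and follows essentially the same route as the paper. It uses the exact identity relating \(\e^{\pm\eta_V W_\rho(x)}\gamma_\rho(x)\) to \(\gamma_{\rho_\pm}(x)\) through the second-order Taylor remainder of \(u\mapsto d\log h(\rho\e^{u})\). It bounds that remainder by \(\eta_V^2 V\lesssim\eta^2\) via \eqref{e025} at parameters comparable to \(\rho\), with \(\eta_V\) a small multiple of \(1-\rho\) (which also gives \(\rho_+<1\)), and concludes with \((1+\abs{y})^j\lesssim_j\e^{\eta y}+\e^{-\eta y}\). The only difference is cosmetic: you use the Lagrange form of the remainder, while the paper uses the integral form and notes that it is nonnegative.
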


\begin{proof}
Since \(V\approx L\tau_\rho^2\), we have
\(\eta_V\lesssim\eta/(\tau_\rho\sqrt L)\leq C\eta/\tau_\rho\).  We
fix \(\eta\) so small that \(\eta_V\leq(1-\rho)/4\).  Since
\(-\log\rho\geq1-\rho\), this choice gives \(\eta_V<-\log\rho\) and
hence \(\rho_+<1\).  It also makes \(\eta_V\) a
sufficiently small multiple of \(1-\rho\), uniformly for every \(L\geq1\).

For \(\abs u\leq\eta_V\), define \(\rho_u\) and record the comparisons
needed below as follows:
\begin{equation}\label{e140}
 \rho_u:=\rho\e^u,\qquad \abs u\leq\eta_V,\qquad
 a(\rho_u)\approx a(\rho),\qquad
 \tau_{\rho_u}\approx\tau_\rho.
\end{equation}
Indeed, \(\eta_V\) is a sufficiently small multiple of \(1-\rho\), so
\(\rho_u\) and \(\rho\), as well as \(1-\rho_u\) and \(1-\rho\), are
comparable.  This proves the comparisons in \eqref{e140}.
Using the second estimate in \eqref{e025}, \eqref{e140}, and
\(d\rho\approx L\), we obtain
\begin{equation}\label{e141}
 dD^2\log h(\rho_u)
 \lesssim d\,a(\rho)\tau_\rho\approx V,
 \qquad \abs u\leq\eta_V.
\end{equation}

Define
\begin{equation}\label{e110}
 F(u):=d\log h(\rho\e^u),\qquad
 F'(0)=dm(\rho),\qquad
 F''(u)=dD^2\log h(\rho\e^u)\geq0.
\end{equation}
Taylor's formula with integral remainder gives, for
\(\sigma\in\{-1,1\}\),
\begin{equation}\label{e111}
 F(\sigma\eta_V)-F(0)-\sigma\eta_VF'(0)
 =\eta_V^2\int_0^1(1-v)F''(\sigma v\eta_V)\dd v.
\end{equation}
The integral in \eqref{e111} is nonnegative by \eqref{e110}, and
\eqref{e141} bounds it by \(CV\).
Consequently,
\[
 0\leq F(\sigma\eta_V)-F(0)-\sigma\eta_VF'(0)
 \lesssim V\eta_V^2\lesssim1.
\]
The definition of \(\gamma_\rho\) in \eqref{e137} gives
\begin{equation}\label{e142}
 \e^{\sigma\eta_VW_\rho(x)}\gamma_\rho(x)
 =\exp\big(F(\sigma\eta_V)-F(0)-\sigma\eta_VF'(0)\big)
 \gamma_{\rho\e^{\sigma\eta_V}}(x)
 \lesssim\gamma_{\rho\e^{\sigma\eta_V}}(x).
\end{equation}
Finally,
\((1+\abs y)^j\lesssim_j\e^{\eta y}+\e^{-\eta y}\).  Apply this with
\(y=W_\rho(x)/\sqrt V\) and use \eqref{e142} to obtain \eqref{e139}.
\end{proof}

For \(\nu\in\{0,1\}\) and \(j\geq0\), define
\begin{equation}\label{e112}
 K_{\nu,j,\rho}(x)
 :=\mathcal F^{-1}\!\left[\mathbb{A}_{\nu,\cdot}^{(j)}(0)\right](x)
 =\int_{\T^d}\mathbb{A}_{\nu,\xi}^{(j)}(0)
   \es{-x\cdot\xi}\dd\xi.
\end{equation}
Thus \(K_{\nu,j,\rho}\) is the convolution kernel of the multiplier
\(\mathbb{A}_{\nu,\xi}^{(j)}(0)\) in \eqref{e112}.

\begin{lemma}\label{jker}
Let \(n,d\in\N\), \(\rho=\rho_{n,d}\), \(L=\min(n,d)\), and let \(b_j\)
be as in \eqref{e134}.  For every fixed integer \(j\geq0\), if
\(L\geq L_1\),
\begin{equation}\label{e143}
 \abs{b_jK_{\nu,j,\rho}(x)}
 \lesssim_j\gamma_{\rho_-}(x)+\gamma_{\rho_+}(x),
 \qquad x\in\Z^d,\quad \nu\in\{0,1\}.
\end{equation}
If \(Q_{\nu,j,n,d}\) denotes convolution operator by the kernel
\(b_jK_{\nu,j,\rho}\), then, for every \(\nu\in\{0,1\}\), every
\(1<p<\infty\), and every \(f\in\ell^p(\Z^d)\),
\begin{equation}\label{e145}
 \norm{\sup_{n:\,\min(n,d)\geq L_1}
 \abs{Q_{\nu,j,n,d}f}}_p\lesssim_{p,j}\norm{f}_p.
\end{equation}
\end{lemma}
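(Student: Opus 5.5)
We will compute the kernel $K_{\nu,j,\rho}$ explicitly. For $\nu=0$, write $\mathbb A_{0,\xi}(\theta)=\prod_j \vartheta(\rho\e^{i\theta},\xi_j)/h(\rho\e^{i\theta})^d$. Expanding the theta product gives the Fourier series
\[
 \mathbb A_{0,\xi}(\theta)=\sum_{x\in\Z^d}\frac{\rho^{\abs x^2}\e^{i\theta\abs x^2}}{h(\rho\e^{i\theta})^d}\,\es{-x\cdot\xi},
\]
up to the sign convention for $\es{\cdot}$. Hence $\mathcal F^{-1}[\mathbb A_{0,\cdot}(\theta)](x)=\gamma_\rho(x)\,\e^{i\theta\abs x^2}\,(h(\rho)/h(\rho\e^{i\theta}))^d$. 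We then differentiate $j$ times in $\theta$ at $\theta=0$. The factor $\e^{i\theta\abs x^2}(h(\rho)/h(\rho\e^{i\theta}))^d=\exp\bigl(i\theta\abs x^2-d(\mathcal H(\rho\e^{i\theta})-\mathcal H(\rho))\bigr)$ has logarithmic derivative $i W_\rho(x)$ at $0$, and its higher logarithmic derivatives are $-i^k dD^k\log h(\rho)$. By Fa\`a di Bruno, $K_{0,j,\rho}(x)$ is therefore $\gamma_\rho(x)$ times a polynomial $P_j$ in $W_\rho(x)$. Each monomial is $W_\rho^{k_1}\prod_{r\ge2}(dD^r\log h)^{k_r}$ with $\sum rk_r=j$.

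For $\nu=1$, the identity \eqref{e075} together with the fact that $R_\rho(\theta)$ does not depend on $\xi$ shows that the kernel is $(-1)^{\abs x^2}$, equivalently $\e^{i\pi\abs{x}^2}$ up to modulation, times a similar expression. Concretely,
\[
 \prod_j\vartheta(\rho\e^{i\theta},\xi_j+1/2)=\sum_x(-1)^{x_1+\dots+x_d}\rho^{\abs x^2}\e^{i\theta\abs x^2}\es{-x\cdot\xi}.
\]
The kernel thus carries a unimodular factor, and the extra derivatives of $R_\rho$ at $0$ contribute only $O(\tau_\rho^k)$ by \eqref{e050}. In absolute value, both cases give
\[
 \abs{K_{\nu,j,\rho}(x)}\lesssim_j\gamma_\rho(x)\sum_{k_1+\sum_{r\ge2}rk_r+k'\le j,\ \text{weights }=j}\abs{W_\rho}^{k_1}\prod_{r\ge2}(L\tau_\rho^r)^{k_r}\tau_\rho^{k'},
\]
where we used \eqref{e025} and \eqref{e017} to bound $\abs{dD^r\log h(\rho)}\lesssim d\rho\tau_\rho^r\approx L\tau_\rho^r$.

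Next we multiply by $\abs{b_j}\lesssim V^{-j/2}$ from \eqref{e135}, with $V\approx L\tau_\rho^2$. A generic monomial then contributes
\[
 (\abs{W_\rho}/\sqrt V)^{k_1}\prod_{r\ge2}\bigl(L\tau_\rho^r V^{-r/2}\bigr)^{k_r}\bigl(\tau_\rho V^{-1/2}\bigr)^{k'} .
\]
Here $L\tau_\rho^rV^{-r/2}\approx L^{1-r/2}\lesssim1$ for $r\ge2$, and $\tau_\rho V^{-1/2}\approx L^{-1/2}\le1$. So $\abs{b_jK_{\nu,j,\rho}(x)}\lesssim_j(1+\abs{W_\rho(x)}/\sqrt V)^j\gamma_\rho(x)$, and Lemma~\ref{tilt} yields \eqref{e143}. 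The step needing care is the bookkeeping in Fa\`a di Bruno, and in particular checking that no factor such as $dD^2\log h$ appears alone without its $V^{-1}$ compensation. The weight count $\sum rk_r=j$ handles this.

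For \eqref{e145}, write $\rho_\pm=\exp(-\pi/s_\pm)$, which is possible since $0<\rho_\pm<1$, so that $\gamma_{\rho_\pm}=g_{s_\pm}$. Then pointwise
\[
 \sup_n\abs{Q_{\nu,j,n,d}f}\lesssim_j\sup_{s>0}G_s\abs f .
\]
Since the kernel bounds hold uniformly in $n$, Theorem~\ref{gaus} gives \eqref{e145} with constants independent of $d$.
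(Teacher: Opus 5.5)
Your proposal is correct and follows essentially the same route as the paper. Both arguments use the same ingredients in the same order:
\begin{itemize}
\item the explicit formula for $\mathcal F^{-1}[\mathbb{A}_{0,\cdot}(\theta)]$;
\item Fa\`a di Bruno applied to $\exp(\Lambda_x)$, with $\abs{dD^r\log h(\rho)}\lesssim L\tau_\rho^r\lesssim V^{r/2}$ balanced against $\abs{b_j}\lesssim V^{-j/2}$;
\item the Leibniz/modulation formula \eqref{e132} for $\nu=1$, where $(\tau_\rho/\sqrt V)^l\lesssim 1$;
\item Lemma~\ref{tilt};
\item pointwise domination by $G_{s_-}\abs f+G_{s_+}\abs f$ followed by Theorem~\ref{gaus}.
\end{itemize}
One step is left implicit, as it is in the paper: interchanging $\partial_\theta^j$ with the inverse Fourier transform. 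This is harmless, because $\mathbb{A}_{0,\xi}(\theta)$ and its $\theta$-derivatives are continuous and bounded uniformly in $\xi$ near $\theta=0$.
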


\begin{proof}
Termwise Fourier inversion and differentiation are justified by the absolute
exponential convergence of the theta series.  More precisely,
\begin{equation}\label{e113}
 \mathcal F^{-1}\!\left[\mathbb{A}_{0,\cdot}(\theta)\right](x)
 =\gamma_\rho(x)\exp\left(
 i\theta\abs{x}^2
 -d\big(\log h(\rho\e^{i\theta})-\log h(\rho)\big)
 \right).
\end{equation}
For the exponential factor in \eqref{e113}, define \(\Lambda_x\) and record
its derivatives at zero as follows:
\begin{equation}\label{e129}
 \begin{aligned}
 \Lambda_x(\theta)
 &:=i\theta\abs{x}^2
 -d\big(\log h(\rho\e^{i\theta})-\log h(\rho)\big),\\
 \Lambda_x'(0)&=iW_\rho(x),\\
 \Lambda_x^{(r)}(0)&=-i^rc_r,\qquad r\geq2,\\
 c_r&:=dD^r\log h(\rho),\qquad r\geq2.
 \end{aligned}
\end{equation}
Estimations \eqref{e025} and \eqref{e017} imply that
\begin{equation}\label{e146}
 \abs{c_r}\lesssim_r L\tau_\rho^r\lesssim_r V^{r/2},
 \qquad r\geq2.
\end{equation}
The Fa\`a di Bruno formula, applied to \(\exp(\Lambda_x(\theta))\) in
\eqref{e113} with
the logarithmic derivatives in \eqref{e129}, and combined with
\eqref{e146}, yields
\begin{equation}\label{e147}
 \abs{K_{0,j,\rho}(x)}
 \lesssim_j V^{j/2}
 \left(1+\frac{\abs{W_\rho(x)}}{\sqrt V}\right)^j\gamma_\rho(x).
\end{equation}
Combining \eqref{e135}, \eqref{e139}, and \eqref{e147} proves
\eqref{e143} for \(\nu=0\).

Let \(\chi(x):=(-1)^{x_1+\cdots+x_d}\).  The Leibniz rule and the Fourier
shift by \(1/2\) give
\begin{equation}\label{e132}
 K_{1,j,\rho}(x)=\chi(x)
 \sum_{l=0}^j\binom jl R_\rho^{(l)}(0)K_{0,j-l,\rho}(x).
\end{equation}
For the summand indexed by \(l\) in \eqref{e132}, estimates \eqref{e050}, \eqref{e135}, and
\eqref{e147} give the bound
\[
 C_j\left(\frac{\tau_\rho}{\sqrt V}\right)^l
 \left(1+\frac{\abs{W_\rho(x)}}{\sqrt V}\right)^{j-l}
 \gamma_\rho(x).
\]
Since \(\tau_\rho/\sqrt V\lesssim L^{-1/2}\leq1\), Lemma~\ref{tilt}
proves \eqref{e143} for \(\nu=1\).

Choose \(s_\pm>0\) through \(\rho_\pm=\exp(-\pi/s_\pm)\).
Equation \eqref{e143} implies the pointwise operator estimate
\begin{equation}\label{e144}
 \abs{Q_{\nu,j,n,d}f}
 \lesssim_jG_{s_-}\abs f+G_{s_+}\abs f.
\end{equation}
Taking the supremum in \eqref{e144} and applying the maximal estimate
\eqref{e028} proves \eqref{e145}.
\end{proof}

\section{Proof of Theorem~\ref{mthm}}\label{sprf}

\subsection{Uniform asymptotic expansions for the multiplier $\mathfrak m_{\sqrt{n}}$}\label{sapp}

We now combine the coefficient estimates.  We decompose the Cauchy integral
into two major arcs and the complementary minor arc.  After estimating all
three integrals, we divide by the counting coefficient.  Fix \(A>1\),
to be chosen in the next subsection, and assume for now that
\begin{equation}\label{e096}
 1\leq n\leq Ad^2,\qquad L=\min(n,d)\geq L_0(A).
\end{equation}
Here \(L_0(A)\geq L_1\) is a preliminary threshold used only for the
normalization argument below; in Proposition~\ref{hmul} we increase it as
needed for the chosen expansion order.  Let \(\rho:=\rho_{n,d}\).
Cauchy's formula expresses the numerator in
\eqref{e007} as
\begin{equation*}
 \frac{\rho^{-n}h(\rho)^d}{2\pi(1-\rho)}
 \int_{-\pi}^{\pi}\mathcal I_\xi(\theta)\dd \theta.
\end{equation*}
The same prefactor occurs when \(\xi=0\).  It is positive and cancels,
so the coefficient ratio is exactly
\begin{equation}\label{e097}
 \mathfrak m_{\sqrt n}(\xi)=
 \frac{\int_{-\pi}^{\pi}\mathcal I_\xi(\theta)\dd \theta}
 {\int_{-\pi}^{\pi}\mathcal I_0(\theta)\dd \theta},
\end{equation}
where
\begin{equation*}
 \mathcal I_\xi(\theta):=\e^{-in\theta}
 \frac{1-\rho}{1-\rho\e^{i\theta}}
 \prod_{j=1}^d\frac{\vartheta(\rho\e^{i\theta},\xi_j)}{h(\rho)}.
\end{equation*}

We now develop the asymptotic expansion of an arbitrary prescribed order for the multiplier $ \mathfrak m_{\sqrt n}(\xi)$ uniformly in $\xi\in\T^d$, when both $n$ and $d$ are large enough.

\begin{proposition}\label{hmul}
Fix \(A>1\) and an integer \(N\geq0\).  There exist an integer
\(L_0=L_0(A,N)\) and a constant \(C_{A,N}\) such that, for any $n\in \mathcal N_{d,N}$ and
 any
\(\xi\in\T^d\),
\begin{equation}\label{e150}
 \mathfrak m_{\sqrt n}(\xi)=p_{n,d}^{[N]}(\xi)+r_{n,d}^{[N]}(\xi),
\end{equation}
where $\mathcal N_{d,N}$ denotes the set of $n\in\N$ satisfying  \eqref{e096} and
\begin{equation}\label{e151}
 p_{n,d}^{[N]}(\xi):=\sum_{j=0}^Nb_j
 \left(\mathbb{A}_{0,\xi}^{(j)}(0)
 +(-1)^n\omega_\rho\mathbb{A}_{1,\xi}^{(j)}(0)\right)
\end{equation}
and
\begin{equation}\label{e152}
 \sup_{\xi\in\T^d}\abs{r_{n,d}^{[N]}(\xi)}
 \leq C_{A,N}L^{-(N+1)/2}.
\end{equation}
Furthermore, supposing that \(P_{n,d}^{[N]}\) is the convolution operator with multiplier
\(p_{n,d}^{[N]}\), then
\begin{equation}\label{e153}
 \norm{P_{n,d}^{[N]}}_{\ell^1\to\ell^1}\lesssim_{A,N}1,
\end{equation}
and, for any \(1<p<\infty\),
\begin{equation}\label{e154}
 \norm{\sup_{n\in\mathcal N_{d,N}}
 \abs{P_{n,d}^{[N]}f}}_{\ell^p\to\ell^p}
 \lesssim_{A,N,p} 1.
\end{equation}
\end{proposition}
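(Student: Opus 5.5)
We split the Cauchy integral in \eqref{e097} into the arc \(I_0\) around \(0\), the arc \(\pi+I_0\) around \(\pi\), and the minor arc \(\dist(\theta,\pi\Z)>\delta/\tau_\rho\).

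\emph{The \(0\)-arc.} Writing \(\theta\in I_0\), the definition \eqref{e029} gives
\[
\mathcal I_\xi(\theta)=\Psi(\theta)\,\mathbb{A}_{0,\xi}(\theta).
\]

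\emph{The \(\pi\)-arc.} Here we write \(\theta=\pi+v\) with \(v\in I_0\). Then \(\e^{-in\theta}=(-1)^n\e^{-inv}\), and
\[
\frac{1-\rho}{1+\rho\e^{iv}}=\frac{1-\rho}{1-\rho\e^{iv}}\,\omega_\rho R_\rho(v).
\]
Using \eqref{e075} we get \(\vartheta(-\rho\e^{iv},\xi_j)=h(\rho\e^{iv})\,r_\rho(v,\xi_j+1/2)\). Hence
\[
\mathcal I_\xi(\pi+v)=(-1)^n\omega_\rho\Psi(v)\mathbb{A}_{1,\xi}(v).
\]
Lemma~\ref{ader} verifies the hypothesis \eqref{e104} with \(U=U_{\nu,\rho}(\xi)\). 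Proposition~\ref{hwt} applied on both arcs then shows that the major-arc integrals, divided by \(J_0\), equal \(p^{[N]}_{n,d}(\xi)\) up to an error \(O_N(L^{-(N+1)/2})\). This uses \(\omega_\rho\le1\).

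\emph{Minor arc.} By \eqref{e074} and \(\abs{(1-\rho)/(1-\rho\e^{i\theta})}\le1\), the minor-arc integral is at most \(2\pi\e^{-cL}\). Dividing by \(J_0\approx V^{-1/2}\) (from \eqref{e082}), we must control \(\sqrt V\,\e^{-cL}\). Since \(V\approx L\tau_\rho^2\) and \(\tau_\rho\approx 1+n/d\lesssim A d\), we get
\[
\sqrt V\,\e^{-cL}\lesssim \sqrt L\,A d\,\e^{-cL}.
\]
This is the one delicate point. When \(n\ge d\) we have \(L=d\), so \(d\,\e^{-cd}\) decays faster than any power of \(L\); this is where \(n\le Ad^2\) is used. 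When \(n<d\) we have \(\tau_\rho\approx1\), and the bound is harmless.

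\emph{Normalization.} The denominator of \eqref{e097} is the case \(\xi=0\). There \(\mathbb{A}_{0,0}\equiv1\) and \(\mathbb{A}_{1,0}(v)=R_\rho(v)\prod_j r_\rho(v,1/2)\). Its normalized value is therefore
\[
1+(-1)^n\omega_\rho\frac{\int\Psi\,\mathbb{A}_{1,0}}{J_0}+O(\e^{-c'L}).
\]
The middle term is \(O(\e^{-cU_{1,\rho}(0)})\), with \(U_{1,\rho}(0)=d\,\ell_\rho(1/2)\approx d\,a(\rho)\approx n\ge L\) by \eqref{e032}. So the denominator is \(1+O(\e^{-cL})\), and in particular it is at least \(1/2\) once \(L_0\) is large.

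\emph{Main obstacle: assembling \eqref{e150}.} The ratio equals the numerator times \(1+O(\e^{-cL})\). The numerator is bounded, being \(\abs{p^{[N]}}\lesssim1\) plus the error. Indeed, \(\abs{b_j\mathbb{A}^{(j)}(0)}\lesssim V^{-j/2}\tau_\rho^j\lesssim L^{-j/2}\) by \eqref{e135} and \eqref{e094}. So the correction from the denominator is absorbed into \(r^{[N]}_{n,d}\). Enlarging \(L_0(A,N)\) as needed gives \eqref{e152}.

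\emph{Operator bounds.} \(P^{[N]}_{n,d}\) is a finite sum of convolutions with kernels \(b_jK_{\nu,j,\rho}\), the \(\nu=1\) terms multiplied by \((-1)^n\omega_\rho\) with \(\abs{(-1)^n\omega_\rho}\le1\). By \eqref{e143} each kernel is dominated by \(\gamma_{\rho_-}+\gamma_{\rho_+}\), which has \(\ell^1\) norm \(2\); this gives \eqref{e153}. Taking the supremum over \(n\) and applying \eqref{e145} termwise gives \eqref{e154}, since the number of terms depends only on \(N\).
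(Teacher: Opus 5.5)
Your proposal is correct and follows the paper's proof step by step. The common steps are the three-arc decomposition, the exact identification $\mathcal I_\xi(\pi+v)=(-1)^n\omega_\rho\Psi(v)\mathbb{A}_{1,\xi}(v)$ via \eqref{e075}, and Proposition~\ref{hwt} with Lemma~\ref{ader} on both major arcs. The remaining steps also match: the minor-arc bound from \eqref{e074} with the split $n\le d$ versus $d<n\le Ad^2$ (the paper's \eqref{e127}, which you phrase as $\tau_\rho\approx 1+n/d$), the normalization through $U_{1,\rho}(0)=d\,\ell_\rho(1/2)\gtrsim L$, and the operator bounds via Lemma~\ref{jker} and Theorem~\ref{gaus}.
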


\begin{proof}
Increase \(L_0(A,N)\), if necessary, so that
\(L_0(A,N)\geq\max\{L_0(A),L_1\}\).
On the circle \(\R/(2\pi\Z)\), set
\begin{equation}\label{e148}
 \begin{aligned}
 \mathcal J_0
 &:=\{\dist(\theta,2\pi\Z)\leq\delta/\tau_\rho\},\\
 \mathcal J_\pi
 &:=\{\dist(\theta,\pi+2\pi\Z)\leq\delta/\tau_\rho\},\\
 \mathcal J_{\mathrm{min}}
 &:=\R/(2\pi\Z)\setminus(\mathcal J_0\cup\mathcal J_\pi).
 \end{aligned}
\end{equation}
On \(\mathcal J_0\) in \eqref{e148}, represented by
\(\abs\theta\leq\delta/\tau_\rho\), we have
\begin{equation*}
 \mathcal I_\xi(\theta)=\Psi(\theta)\mathbb{A}_{0,\xi}(\theta).
\end{equation*}

The two pieces of the arc at \(\pi\) in \([-\pi,\pi]\) form one arc on
the circle.  We parameterize it by \(\theta=\pi+v\) modulo \(2\pi\), with
\(\abs{v}\leq\delta/\tau_\rho\).  Identity \eqref{e075} gives the exact
formula
\begin{equation*}
 \mathcal I_\xi(\pi+v)=(-1)^n\omega_\rho
 \Psi(v)\mathbb{A}_{1,\xi}(v).
\end{equation*}
Indeed, the phase contributes \((-1)^n\), and the theta product is shifted
by \(1/2\).  The geometric factors satisfy
\begin{equation*}
 \omega_\rho\frac{1-\rho}{1-\rho\e^{iv}}R_\rho(v)
 =\frac{1-\rho}{1+\rho\e^{iv}}.
\end{equation*}
There is no factor two, because the two endpoint intervals are the two
parts of this single circular arc.  There is also no factor \((-1)^d\),
because \(k^2\) and \(k\) have the same parity in \eqref{e075}.

We first record the estimates needed to normalize the coefficient.  On the
complementary set, we have
\begin{equation}\label{e122}
 \frac{1-\rho}{\abs{1-\rho\e^{i\theta}}}\leq1.
\end{equation}
Together with Proposition~\ref{twow}, inequality \eqref{e122} bounds the integral over
\(\mathcal J_{\mathrm{min}}\) in \eqref{e148} by \(C\e^{-cL}\), since this set has total length at
most \(2\pi\).
By \eqref{e082}, the size of the minor-arc integral relative to \(J_0\) is at
most
\begin{equation}\label{e102}
 C\tau_\rho\sqrt L\,\e^{-cL}.
\end{equation}
Under \eqref{e096}, this quantity decays faster than any prescribed fixed
power of \(L\).  Indeed, if \(n\leq d\),
then \(\tau_\rho\approx1\).  If \(d<n\leq Ad^2\), Lemma~\ref{spar}
yields \(\tau_\rho\lesssim_A d\), while \(L=d\).  Thus, throughout the
range \eqref{e096},
\begin{equation}\label{e127}
 \tau_\rho\lesssim_A 1+L,\qquad
 \tau_\rho\sqrt L\,\e^{-cL}\lesssim_{A,M} L^{-M}
 \quad\text{for every fixed }M>0.
\end{equation}

We next normalize the coefficient.  The contribution of \(\mathcal J_0\) to the
denominator can be evaluated exactly:
\begin{equation*}
 \int_{\mathcal J_0}\mathcal I_0(\theta)\dd \theta=J_0,
\end{equation*}
because \(\mathbb{A}_{0,0}(\theta)=1\) on \(\mathcal J_0\).  Moreover,
Lemma~\ref{trat}, together with Lemma~\ref{spar}, implies
\begin{equation}\label{e103}
 U_{1,\rho}(0)=d\ell_\rho(1/2)\gtrsim L.
\end{equation}
Equation \eqref{e095}, \eqref{e103}, and the case \(q=0\) of
\eqref{e133} show that the \(\mathcal J_\pi\) contribution to the
denominator is \(O(J_0\e^{-cL})\).  By \eqref{e102} and \eqref{e127}, the
minor arc has the same form after decreasing \(c\).  Hence
\begin{equation}\label{e003}
 \frac{\int_{-\pi}^{\pi}\mathcal I_0(\theta)\dd \theta}{J_0}
 =1+O_A(\e^{-cL}).
\end{equation}
After increasing \(L_0(A,N)\), the ratio in \eqref{e003} has modulus at
least \(1/2\).

We now estimate the Taylor remainder on the two major arcs.
Apply Proposition~\ref{hwt} to \(\mathbb{A}_{0,\xi}\) and
\(\mathbb{A}_{1,\xi}\), using Lemma~\ref{ader}.  The two normalized
major-arc remainders are bounded by \(C_NL^{-(N+1)/2}\), uniformly in
\(\xi\).  The minor arc, divided by \(J_0\), has size at most the quantity
in \eqref{e102}.
In the range \(n\leq Ad^2\), estimate \eqref{e127} shows that this is
\(O_{A,N}(L^{-(N+1)/2})\).  Consequently, after division by \(J_0\), the
numerator and denominator in \eqref{e097} have the form
\begin{equation}\label{e009}
 p_{n,d}^{[N]}(\xi)+E_\xi,
 \qquad 1+F_0,
\end{equation}
respectively, where
\begin{equation}\label{e010}
 \sup_{\xi\in\T^d}\abs{E_\xi}\lesssim_{A,N}L^{-(N+1)/2},
 \qquad \abs{F_0}\lesssim_A\e^{-cL}.
\end{equation}
Equations \eqref{e094}, \eqref{e135}, and
the estimate \(V\approx L\tau_\rho^2\) in \eqref{e018} also give
\(\sup_\xi\abs{p_{n,d}^{[N]}(\xi)}\lesssim_N1\). Hence by  \eqref{e009}-\eqref{e010}, after
increasing \(L_0(A,N)\), we obtain
\eqref{e150}--\eqref{e152}, as required.

By Lemma~\ref{jker}, every summand in the inverse Fourier kernel of
\eqref{e151} has uniformly bounded \(\ell^1\) norm; here we use that each
normalized Gaussian in \eqref{e143} has total mass one.  Since the sum has
only \(2(N+1)\) terms and \(0<\omega_\rho\leq1\), this proves
\eqref{e153}.
The same lemma and
Theorem~\ref{gaus}, followed by the triangle inequality over
\(0\leq j\leq N\) and the two major arcs, prove \eqref{e154}.
\end{proof}

\subsection{Completion of the proof}\label{scom}

We now pass from the multiplier asymptotics to the maximal operator.  For
large radii we use the following theorem, whose supremum is taken over every
sufficiently large radius.

\begin{theorem}[Bourgain, Mirek, Stein, and Wr\'obel~\cite{BMS21},
  Theorem 2]
\label{larg}
There is an absolute \(C_0>0\) such that, for every
\(p\in(1,\infty]\), every \(d\in\N\), and every
\(f\in\ell^p(\Z^d)\),
\begin{equation}\label{e108}
 \norm{\sup_{t\geq C_0d}\abs{M_tf}}_{\ell^p(\Z^d)}
 \lesssim_p\norm{f}_{\ell^p(\Z^d)},
\end{equation}
with a constant independent of \(d\).
\end{theorem}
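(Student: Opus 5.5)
This is the large-radius comparison theorem of Bourgain, Mirek, Stein, and Wr\'obel, which the paper quotes from \cite{BMS21}; the plan below reconstructs a route to it and is not the paper's own argument. The idea is to dominate the discrete ball averages at radii \(t\geq C_0d\) by continuous averages of a step extension of \(f\), and then invoke Stein's dimension-free continuous theorem~\cite{Ste82,SS83}. The case \(p=\infty\) is trivial, so fix \(1<p<\infty\). We may assume \(f\geq0\), since \(\abs{M_tf}\le M_t\abs f\). Let \(Q:=[-1/2,1/2)^d\) and \(F(z):=f(x)\) for \(z\in x+Q\), so that \(\norm{F}_{L^p(\R^d)}=\norm f_{\ell^p(\Z^d)}\). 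Then
\[
 \sum_{y\in tB}f(x-y)=\int_{tB+Q}F(x-z)\dd z .
\]
The naive inclusion \(tB+Q\subset B_{t+\sqrt d/2}\) is too lossy: the volume ratio \(((t+\sqrt d)/t)^d\approx\e^{d^{3/2}/t}\) is bounded only when \(t\gtrsim d^{3/2}\). The plan is to exploit concentration instead.

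\textbf{Step 1 (concentration of the thickened lattice ball).} For \(y\in\Z^d\) and \(q\) uniform in \(Q\), write \(\abs{y+q}^2=\abs y^2+2y\cdot q+\abs q^2\). Here \(\abs q^2\) concentrates at \(d/12\) with fluctuations \(O(\sqrt d)\), and \(y\cdot q\) is a sum of independent bounded variables of variance \(\abs y^2/12\). With \(t_*:=\sqrt{t^2+d/12}\), the set \(tB+Q\) should therefore be the ball \(B_{t_*}\) up to a shell of width \(O(1)\) in the radius, with Gaussian tails beyond it. Since \(t\ge C_0d\), such a shell changes the volume by a factor \((1+O(1/t))^d=O(1)\). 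Two consequences are needed:
\begin{enumerate}
\item[(i)] \(\#(tB)\approx\abs{B_{t_*}}\) uniformly in \(d\) (essentially Mazo--Odlyzko type counting);
\item[(ii)] the pointwise majorization
\[
 M_tf(x)\lesssim\sum_{k\geq0}\e^{-ck}\,\frac{1}{\abs{B_{t_*+k}}}\int_{B_{t_*+k}}F(x-z)\dd z .
\]
\end{enumerate}
For (ii), decompose \(tB+Q\) into layers according to \(\abs z-t_*\in[k,k+1)\) and use Hoeffding bounds on the cube. The growth \(\abs{B_{t_*+k}}/\abs{B_{t_*}}\le\e^{kd/t_*}\le\e^{k/C_0}\) is beaten by the Gaussian layer weights once \(C_0\) is large.

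\textbf{Step 2 (continuous maximal function and transfer back).} By (ii), \(\sup_{t\ge C_0d}M_tf(x)\lesssim\mathcal M F(x)\), where \(\mathcal M\) is the continuous centered ball maximal function, restricted to radii \(\gtrsim d\). To pass from the lattice point \(x\) to an \(L^p\) norm, replace \(x\) by \(x+w\) with \(w\in Q\): since \(\abs w\le\sqrt d/2\ll t\), the inclusion \(B_{t_*+k}(x)\subset B_{t_*+k+\sqrt d}(x+w)\) costs a factor \((1+\sqrt d/t)^d\le\e^{d^{3/2}/t}\). This is again too lossy, so I would instead average the majorant over the random offset \(w\). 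This turns \(F(x-z)\) into \(F\) convolved with the uniform measure on \(Q\), which is the same smearing already handled in Step 1. The net effect is that \(\sup_tM_tf(x)\) is bounded by \(\int_Q\mathcal M'F(x+w)\dd w\), where \(\mathcal M'\) is a dimension-free combination of ball averages. Then Jensen (or H\"older) and Stein's theorem give
\[
 \norm{\sup_{t\ge C_0d}M_tf}_{\ell^p}\lesssim\norm{\mathcal M'F}_{L^p}\lesssim_p\norm F_{L^p}=\norm f_{\ell^p}.
\]

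\textbf{Main obstacle.} The delicate point is obtaining (ii) with constants that are genuinely uniform in \(d\), that is, controlling the interaction between the cube thickening and the ball boundary in the regime \(t\approx d\), where \(d/t\) is only a bounded constant. If the pure pointwise argument fails there, I would fall back on a Fourier comparison. One would bound the difference between \(\mathfrak m_t\) and the continuous ball multiplier at radius \(t_*\) by \(\min(\abs{t\xi},(t\abs\xi)^{-1})\)-type quantities, so that square-function and Littlewood--Paley arguments give the \(\ell^2\) estimate. The range \(1<p<2\) would then follow by interpolating with the \(\ell^1\)-boundedness of positive averages, applied to a lacunary decomposition.
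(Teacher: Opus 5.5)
The paper does not prove this statement. It quotes it from \cite{BMS21} and uses it only for the squared radii $n>Ad^2$ in the final step. Your plan (step extension, concentration on unit cubes, Stein's dimension-free continuous theorem) is the comparison-with-continuous-averages strategy that the introduction attributes to \cite{BMS21}, and it can be completed. As written, however, it rests on an unproved input, and this input is exactly where the threshold $t\geq C_0d$ enters.

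\textbf{The lattice count lower bound.} Both (ii) and the transfer in Step 2 only give $M_tf(x)\lesssim(\abs{B_R}/\#(tB))\times(\text{continuous averages})$ with $R$ slightly larger than $t$. So everything hinges on the \emph{lower} bound $\#(tB)\gtrsim\abs{B_t}$, uniformly in $d$ and in $t\geq C_0d$. You only assert this (``Mazo--Odlyzko type counting''). Those results are asymptotic, and their uniformity would have to be checked. Concentration on the cubes $y+Q$ with $\abs y\leq t$ yields only the upper bound $\#(tB)\leq2\abs{B_{\sqrt{t^2+d/4}}}$. The lower bound needs a bootstrap through that upper bound. Write
\[
 \abs{B_{t-k}}=\sum_{y\in\Z^d}\abs{(y+Q)\cap B_{t-k}}\leq\#(tB)+\sum_{\abs y>t}\abs{(y+Q)\cap B_{t-k}} .
\]
For $\abs y>t$, apply Hoeffding to $2y\cdot q$ and use $\abs q^2\geq0$. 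This bounds the summand by $\exp\bigl(-(\abs y^2-(t-k)^2)^2/(2\abs y^2)\bigr)$, which is at most $\e^{-(j+k)^2/18}$ on the shell $t+j<\abs y\leq t+j+1$, $0\leq j\leq t$. By the upper bound, this shell contains at most $2\abs{B_{t+j+2}}\leq2\e^{(j+2)d/t}\abs{B_t}$ lattice points, and the range $\abs y>2t$ is negligible. Hence $\#(tB)\geq(\e^{-2kd/t}-\epsilon_k)\abs{B_t}$ with $\epsilon_k\to0$ uniformly in $d$. Choosing $k$ large and then $C_0$ large gives $\#(tB)\geq\frac12\abs{B_t}$.

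\textbf{The pointwise majorization.} (ii) cannot be proved by slicing the continuous set $tB+Q$ into shells and using that the outer shells have small volume, because $F\geq0$ may concentrate on a thin shell. The estimate must be made cube by cube, and then no layers are needed. Let $\abs y\leq t$ and let $u$ be any symmetric random vector with $\abs u^2\leq d$. The identity $\abs{y+u}^2+\abs{y-u}^2=2\abs y^2+2\abs u^2$ gives $\Pr(\abs{y+u}\leq R)\geq\frac12$ with $R:=\sqrt{t^2+d}$. Take $u=w-q$ with $w,q$ independent and uniform on $Q$. This is your random offset; note that the smearing is $Q-Q$, not $Q$. For $f\geq0$ it yields
\[
 M_tf(x)\leq\frac{2\abs{B_R}}{\#(tB)}\int_Q\frac{1}{\abs{B_R}}\int_{B_R}F(x+w-z)\dd z\dd w,\qquad \frac{\abs{B_R}}{\abs{B_t}}\leq\e^{d^2/(2t^2)}.
\]
Jensen's inequality and Stein's theorem then finish as in your Step 2.

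With these two repairs the pointwise argument covers all $t\geq C_0d$, including $t\approx d$. The Fourier fallback in your last paragraph is therefore unnecessary. As sketched, it is also not clear that it would reach $1<p<2$ for the full supremum.
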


\begin{proof}[Proof of Theorem~\ref{mthm}]
We fix an \(A>1\) so that
\begin{equation}\label{e109}
 A>(C_0+1)^2.
\end{equation}
For any \(1<p\leq2\), we choose an integer \(N\geq0\) such that
\begin{equation}\label{e156}
 \alpha:=(N+1)(p-1)\geq2.
\end{equation}
Condition \eqref{e156} makes the \(p\)-th powers of the interpolated remainder
bounds summable over the squared radii.
For brevity, write \(L_0:=L_0(A,N)\) for the threshold in
Proposition~\ref{hmul}.  This number may depend on \(p\) through \(N\), but
it is independent of \(d\).
We divide the squared radii into three classes: \textbf{(i)} the set
\(\mathcal N_{d,N}\); \textbf{(ii)} the indices \(n\leq Ad^2\) with
\(\min(n,d)<L_0\); \textbf{(iii)} and the indices \(n>Ad^2\).  Proposition~\ref{hmul}
bounds the Taylor terms on the first class.  We first sum the corresponding
remainders, and then treat the other two classes.
For \(n\in\mathcal N_{d,N}\), let
\begin{equation}\label{e026}
 R_{n,d}^{[N]}:=M_{\sqrt n}-P_{n,d}^{[N]}.
\end{equation}
For the operator \(R_{n,d}^{[N]}\) defined in \eqref{e026}, the ball average
is an \(\ell^1\) contraction, so \eqref{e153} gives
\begin{equation}\label{e157}
 \norm{R_{n,d}^{[N]}}_{\ell^1\to\ell^1}\lesssim_{A,N}1.
\end{equation}
By Plancherel's theorem and \eqref{e152},
\begin{equation}\label{e158}
 \norm{R_{n,d}^{[N]}}_{\ell^2\to\ell^2}
 \lesssim_{A,N}L^{-(N+1)/2}.
\end{equation}
Riesz--Thorin interpolation between \eqref{e157} and \eqref{e158}, with
interpolation parameter \(2(p-1)/p\), yields
\begin{equation}\label{e159}
 \norm{R_{n,d}^{[N]}f}_p
 \lesssim_{A,N,p}L^{-(N+1)(p-1)/p}\norm f_p.
\end{equation}

The remainder estimates can now be summed directly.  Since
\(\mathcal N_{d,N}\) is finite, for every \(x\in\Z^d\),
\begin{equation}
 \sup_{n\in\mathcal N_{d,N}}\abs{R_{n,d}^{[N]}f(x)}^p
 \leq\sum_{n\in\mathcal N_{d,N}}\abs{R_{n,d}^{[N]}f(x)}^p.
\label{e161}
\end{equation}
Summing \eqref{e161} over \(x\), applying \eqref{e159}, and splitting the
sum at \(n=d\), we obtain
\begin{align}
 \norm{\sup_{n\in\mathcal N_{d,N}}
 \abs{R_{n,d}^{[N]}f}}_p^p
 &\leq\sum_{n\in\mathcal N_{d,N}}
 \norm{R_{n,d}^{[N]}f}_p^p\notag\\
 &\lesssim_{A,N,p}
 \left(\sum_{L_0\leq n\leq d}n^{-\alpha}
 +\sum_{d<n\leq Ad^2}d^{-\alpha}\right)\norm f_p^p\notag\\
 &\lesssim_{A,N,p}\norm f_p^p,
\label{e160}
\end{align}
where the final inequality in \eqref{e160} follows from \eqref{e156}.  Combining
\eqref{e154} and \eqref{e160} controls all middle-range indices in
\(\mathcal N_{d,N}\).  The arbitrary expansion order is required at this
step: a fixed expansion order would not make the second sum in
\eqref{e160} converge uniformly when \(p\) is close to one.

For the indices with \(n\leq Ad^2\) excluded by \(L<L_0\), no
asymptotic estimate is needed.  If \(d\geq L_0\), then necessarily
\(n<L_0\), so there are at most \(L_0\) such averages, including the one
with \(n=0\).  If
\(d<L_0\), then there are at most \(AL_0^2+1\) indices with
\(n\leq Ad^2\).
Each is an \(\ell^p\)-contraction, and the pointwise \(p\)-sum bounds their
maximum with a constant depending only on \(A\), \(p\), and \(L_0\).
The average corresponding to \(n=0\) is the identity.

If \(n>Ad^2\), then \(\sqrt n>C_0d\) by \eqref{e109}, so
the large radius estimate \eqref{e108} applies.  Finally, for every
\(t\geq0\),
\begin{equation*}
 tB=\sqrt{\lfloor t^2\rfloor}\,B.
\end{equation*}
Combining the middle-range estimate, the finite exceptional range, and
Theorem~\ref{larg}, we obtain \eqref{e004} for \(1<p\leq2\).  Since the maximal
operator is bounded on \(\ell^\infty\) with
norm one, an interpolation leads to  \eqref{e004} for every $1<p\le\infty$ immediately. Hence the proof of Theorem~\ref{mthm} is completed.
\end{proof}

\pdfbookmark[1]{Acknowledgements}{acks}
\section*{Acknowledgements}
The author would like to thank 
Professor B{\l}a\.zej Wr\'obel for his candid and constructive
correspondence, and for his valuable suggestions on clarifying the relation
between this paper and concurrent work.

\pdfbookmark[1]{Declaration of Artificial Intelligence}{DAI}
\section*{Declaration of Artificial Intelligence}

The proof strategy in this paper was developed through repeated dialogues between the author and OpenAI's ChatGPT and Codex. The author initially asked the system to explore possible improvements of the results of \cite{NW26}, whose maximal estimates were restricted to $p\geq 2$. After several iterations, the system suggested treating the entire difficult range $n\leq Ad$ directly, and the author adopted this suggestion. To simplify the task, the author then instructed the system not to proceed through the spherical maximal operator, but instead to prove the ball maximal estimate directly. This instruction appears to have led GPT to move beyond the approach of \cite{NW26} and then to produce a new saddle point equation \eqref{e008} tailored to the ball problem. Incidentally, its role lies in  extracting the principal term for the oscillatory integral representation \eqref{e097} in a more efficient way. Further iterations refined the multiplier estimates and produced a complete argument for $p\geq2$, including the case $p=2$ posed by Stein. The author subsequently examined the generated argument independently. During this verification, the author observed that the method might also apply when $1<p<2$. An early version of Proposition \ref{hmul} retained only the first two terms in \eqref{e150} (corresponding to $N=1$). Through further interaction on generalizing the the result to $1<p<2$, the system extended that version to the expansion of arbitrary prescribed order $N$. Noe that the generating new asymptotic expansion is the key to yield the result for $1<p<2$.

On the other hand, the preprint of Jin and Su \cite{JS26}, which proves the same full-radius ball maximal theorem, appeared shortly before the first version of this paper. Approximately seven hours after that version was posted, Hormozi, Niksiński, and Wróbel  posted their work \cite{HNW26}. Professor B{\l}a\.zej Wr\'obel subsequently informed the author that their ball result had been obtained even earlier, with AI tools being used only to improve the language and presentation. Their paper also establishes a dimension-free theorem for the spherical maximal operator; see \cite[Theorem~2]{HNW26}. As explained in Subsection \ref{smeth}, the present proof has substantial methodological overlap with that of \cite{HNW26}. The author has therefore decided not to submit the present manuscript for a journal publication. This revised arXiv version is posted solely to correct and clarify the public record. The author makes no claim of priority for any result or method common to the two papers and acknowledges the prior development of those results and methods by  Hormozi,  Niksi\'nski and  Wr\'obel. Moreover, the concurrent contribution of Jin and Su \cite{JS26} is also acknowledged and compared in Subsection \ref{smeth}.

\pdfbookmark[1]{References}{refs}

\bigskip
\medskip\noindent
Sheng-Chen Mao \\
School of Mathematics and Statistics \\
Lanzhou University \\
No. 222 Tianshui South Road \\
Lanzhou 730000, P.R. China \\
\medskip\noindent
\begin{tabular}{@{}ll@{}}
{\textit{E-mail addresses:}}&{\ttfamily maoshengchen@lzu.edu.cn; maosci@163.com}
\end{tabular}

\begin{thebibliography}{99}

\bibitem{Bou86}
J.~Bourgain,
\emph{On high-dimensional maximal functions associated to convex bodies},
Amer. J. Math. \textbf{108} (1986), no.~6, 1467--1476,
\url{https://doi.org/10.2307/2374532}.

\bibitem{BouLp}
J.~Bourgain,
\emph{On the \(L^p\)-bounds for maximal functions associated to convex bodies
in \(\R^n\)},
Israel J. Math. \textbf{54} (1986), no.~3, 257--265,
\url{https://doi.org/10.1007/BF02764955}.

\bibitem{BMS19}
J.~Bourgain, M.~Mirek, E.~M. Stein, and B.~Wr\'obel,
\emph{Dimension-free estimates for discrete Hardy--Littlewood averaging
operators over the cubes in \(\Z^d\)},
Amer. J. Math. \textbf{141} (2019), no.~3, 857--905,
\url{https://doi.org/10.1353/ajm.2019.0023}.

\bibitem{BMS20}
J.~Bourgain, M.~Mirek, E.~M. Stein, and B.~Wr\'obel,
\emph{On Discrete Hardy--Littlewood Maximal Functions over the Balls in
\(\Z^d\): Dimension-Free Estimates},
in: \emph{Geometric Aspects of Functional Analysis. Vol. I},
Lecture Notes in Mathematics, vol.~2256, Springer, Cham, 2020,
127--169,
\url{https://doi.org/10.1007/978-3-030-36020-7_8}.

\bibitem{BMS21}
J.~Bourgain, M.~Mirek, E.~M. Stein, and B.~Wr\'obel,
\emph{On the Hardy--Littlewood Maximal Functions in High Dimensions:
Continuous and Discrete Perspective},
in: \emph{Geometric Aspects of Harmonic Analysis},
Springer INdAM Series, vol.~45, Springer, Cham, 2021, 107--148,
\url{https://doi.org/10.1007/978-3-030-72058-2_3}.

\bibitem{Car86}
A.~Carbery,
\emph{An almost-orthogonality principle with applications to maximal
functions associated to convex bodies},
Bull. Amer. Math. Soc. (N.S.) \textbf{14} (1986), no.~2, 269--273,
\url{https://doi.org/10.1090/S0273-0979-1986-15436-4}.

\bibitem{D26}
M.~Dymowski,
\emph{Lattice points in high-dimensional \(\ell^q\) balls with small radii},
preprint (2026), arXiv:2608.28249,
\url{https://doi.org/10.48550/arXiv.2608.28249}.

\bibitem{HNW26}
M.~Hormozi, J.~Niksi\'nski, and B.~Wr\'obel,
\emph{Dimension-free estimates for full discrete maximal functions associated
with Euclidean balls and spheres},
preprint (2026), arXiv:2609.10763v1,
\url{https://arxiv.org/abs/2609.10763v1}.

\bibitem{JS26}
K.~Jin and Q.~Su,
\emph{Full-radius dimension-free maximal inequalities for discrete Euclidean
balls},
preprint (2026), arXiv:2609.08433,
\url{https://doi.org/10.48550/arXiv.2609.08433}.

\bibitem{KMP23}
D.~Kosz, M.~Mirek, P.~Plewa, and B.~Wr\'obel,
\emph{Some remarks on dimension-free estimates for the discrete
Hardy--Littlewood maximal functions},
Israel J. Math. \textbf{254} (2023), 1--38,
\url{https://doi.org/10.1007/s11856-022-2382-7}.

\bibitem{MO90}
J.~E. Mazo and A.~M. Odlyzko,
\emph{Lattice points in high-dimensional spheres},
Monatsh. Math. \textbf{110} (1990), 47--61,
\url{https://doi.org/10.1007/BF01571276}.

\bibitem{MSW24}
M.~Mirek, T.~Z. Szarek, and B.~Wr\'obel,
\emph{Dimension-Free Estimates for the Discrete Spherical Maximal
Functions},
Int. Math. Res. Not. IMRN \textbf{2024} (2024), no.~2, 901--963,
\url{https://doi.org/10.1093/imrn/rnac329}.

\bibitem{MSW25}
M.~Mirek, T.~Z. Szarek, and B.~Wr\'obel,
\emph{Dimension-free estimates for discrete maximal functions related to
normalized Gaussians},
preprint (2025), arXiv:2503.11259,
\url{https://arxiv.org/abs/2503.11259}.

\bibitem{Mul90}
D.~M\"uller,
\emph{A geometric bound for maximal functions associated to convex bodies},
Pacific J. Math. \textbf{142} (1990), no.~2, 297--312,
\url{https://doi.org/10.2140/pjm.1990.142.297}.

\bibitem{N26}
J.~Niksi\'nski,
\emph{High-dimensional discrete 1-symmetric convex bodies and dimension-free
estimates for maximal functions},
preprint (2026), arXiv:2608.17302,
\url{https://doi.org/10.48550/arXiv.2608.17302}.

\bibitem{NW26}
J.~Niksi\'nski and B.~Wr\'obel,
\emph{Dimension-free estimates for discrete maximal functions and
lattice points in high-dimensional spheres and balls with small radii},
J. Math. Pures Appl. \textbf{214} (2026), article 103955,
\url{https://doi.org/10.1016/j.matpur.2026.103955}.

\bibitem{RW10}
W.~P. Reinhardt and P.~L. Walker,
\emph{Theta Functions}, in F.~W. J. Olver, D.~W. Lozier,
R.~F. Boisvert, and C.~W. Clark (eds.),
\emph{NIST Handbook of Mathematical Functions},
Cambridge University Press, Cambridge, 2010, Chapter~20, 523--536,
\url{https://dlmf.nist.gov/20}.

\bibitem{Ste82}
E.~M. Stein,
\emph{The development of square functions in the work of A. Zygmund},
Bull. Amer. Math. Soc. (N.S.) \textbf{7} (1982), no.~2, 359--376,
\url{https://projecteuclid.org/euclid.bams/1183549638}.

\bibitem{SS83}
E.~M. Stein and J.-O. Str\"omberg,
\emph{Behavior of maximal functions in \(\R^n\) for large \(n\)},
Ark. Mat. \textbf{21} (1983), no.~2, 259--269,
\url{https://doi.org/10.1007/BF02384314}.

\end{thebibliography}
\end{document}